\newcommand{\arxiv}[1]{\href{http://arxiv.org/pdf/#1}{arXiv:#1}}
\DeclareMathAlphabet{\mathbbe}{U}{bbold}{m}{n}
\newcommand{\simplexcategory}{\mathbbe{\Delta}}
\newcommand{\terminal}{1}
\def\Xiplus{\Xi_+}
\newcommand{\onedot}{
  \bsegment
    \move (0 0) \fcir f:0 r:2
  \esegment
}
\newcommand{\dlpullback}[1][dl]{\save*!/#1-4ex/#1:(-1,1)@^{|-}\restore}
\newcommand{\drpullback}[1][dr]{\save*!/#1-4ex/#1:(-1,1)@^{|-}\restore}
\def\eq{{\mathrm{eq}}}
\def\Map{{\mathrm{Map}}}
\newcommand{\Eq}{\operatorname{Eq}}
\def\M{M\"obius\xspace}
\newcommand{\FILT}{tight\xspace}
\def\overarrow#1{{\vec{#1}}}
\def\nondeg{\overarrow}
\newcommand{\Phieven}{\Phi_{\text{\rm even}}}
\newcommand{\Phiodd}{\Phi_{\text{\rm odd}}}
\newcommand{\wtil}{\widetilde}
\newcommand{\un}{\underline}
\newcommand{\Deltainj}{\simplexcategory_{\text{\rm inj}}}
\newcommand{\culf}{CULF\xspace}
\newcommand{\culfsymb}{\mathrm{culf}}
\renewcommand{\Im}{\operatorname{Im}}
\newcommand{\aInt}{\kat{aInt}}
\newcommand{\Int}{\kat{Int}}
\newcommand{\Decomp}{\kat{Dcmp}}
\newcommand{\cDecomp}{\kat{cDcmp}}
\newcommand{\FD}{\kat{FD}}
\newcommand{\cFD}{\kat{cFD}}
\def\Id{\text{Id}}
\newcommand{\Decbot}[1]{\operatorname{Dec}_\bot{}\kern-2pt{#1}}
\newcommand{\Dectop}[1]{\operatorname{Dec}_\top{}\kern-2pt{#1}}
\providecommand{\norm}[1]{\left| {#1}\right|}
\def\onto{\twoheadrightarrow}
\def\into{\hookrightarrow}
\providecommand{\kat}[1]{\text{\textbf{\textsl{#1}}}}
\newcommand{\LIN}{\kat{LIN}}
\newcommand{\HH}{\mathcal H}
\newcommand{\MI}{\ensuremath{M}}
\newcommand{\UU}{\ensuremath{U}}
\newcommand{\rat}{\rightarrowtail}
\newcommand{\upperstar}{^{\raisebox{-0.25ex}[0ex][0ex]{\(\ast\)}}}
\newcommand{\lowershriek}{_!}
\newcommand{\uppershriek}{^!}
\newcommand{\isopil}{\stackrel{\raisebox{0.1ex}[0ex][0ex]{\(\sim\)}}%
			{\raisebox{-0.15ex}[0.28ex]{\(\rightarrow\)}}}
\newcommand{\isleftadjointto}{\dashv}
\newcommand{\tensor}{\otimes}
\newcommand{\op}{^{\text{{\rm{op}}}}}
\newcommand{\Set}{\kat{Set}}
\newcommand{\Grpd}{\mathcal{S}}
\newcommand{\grpd}{\mathcal{F}}
\newcommand{\A}{\mathbb{A}}
\newcommand{\Q}{\mathbb{Q}}
\newcommand{\C}{\mathbb{C}}
\newcommand{\CC}{\mathcal{C}}
\newcommand{\DD}{\mathcal{D}}
\newcommand{\EE}{\mathcal{E}}
\newcommand{\comma}{\raisebox{1pt}{$\downarrow$}}
\newcommand{\name}[1]{\ulcorner #1\urcorner}
\newcommand{\Fun}{\operatorname{Fun}}
\newcommand{\id}{\operatorname{id}}
\newcommand{\colim}{\operatornamewithlimits{colim}}
\newcommand{\Ar}{\operatorname{Ar}}
\newcommand{\sker}{\kern0.5pt \mathrm{s}}
\newtheorem{lemma}{Lemma}[section]
\newtheorem{prop}[lemma]{Proposition}
\newtheorem{thm}[lemma]{Theorem}
\newtheorem{theorem}[lemma]{Theorem}
\newtheorem{cor}[lemma]{Corollary}
\theoremstyle{definition}
\newtheorem{BM}[lemma]{Remark}
\newtheorem{taller}[lemma]{$\!\!$}
\newenvironment{blanko}[1]%
{\begin{taller}{\normalfont\bfseries  #1}\normalfont}%
{\end{taller}}
\newenvironment{blanko*}[1]{\begin{list}{\bf {#1} }%
{\setlength{\labelsep}{0mm}\setlength{\leftmargin}{0mm}%
\setlength{\labelwidth}{0mm}\setlength{\listparindent}{\parindent}%
\setlength{\parsep}{\parskip}\setlength{\partopsep}{0mm}}%
\item%
}{\end{list}}
\newenvironment{proof*}[1]{\begin{list}{\em #1 }%
{\setlength{\labelsep}{0mm}\setlength{\leftmargin}{0mm}%
\setlength{\labelwidth}{0mm}\setlength{\listparindent}{\parindent}%
\setlength{\parsep}{\parskip}\setlength{\partopsep}{0mm}}%
\item}{\qed\end{list}}
\thanks{%
  The first author 
  was partially supported by grants 
  MTM2012-38122-C03-01,  
  MTM2013-42178-P,       
  2014-SGR-634,          
  MTM2015-69135-P,   
  MTM2016-76453-C2-2-P  (AEI/FEDER, UE), and   
  2017-SGR-932,          
  the second author 
  by 
  MTM2013-42293-P, 
  MTM2016-80439-P  (AEI/FEDER, UE),
  and
  2017-SGR-1725,
  and the third author   
  by
  MTM2013-42178-P and
  MTM2016-76453-C2-2-P (AEI/FEDER, UE)}
\author{Imma G\'alvez-Carrillo}
\address{Departament de Matem\`atiques
      \\Universitat Polit\`ecnica de Catalunya
	  }
\email{m.immaculada.galvez@upc.edu}
\author{Joachim Kock}
\address{Departament de Matem\`atiques
       \\Universitat Aut\`onoma de Barcelona
	   }
\email{kock@mat.uab.cat}
\author{Andrew Tonks}
\address{Department of Mathematics\\ 
University of Leicester
}
\email{apt12@le.ac.uk}
\title[The decomposition space of M\"obius intervals]{Decomposition spaces, incidence algebras and M\"obius inversion
III: the decomposition space of M\"obius intervals}
\date{}         
\begin{document}
\begin{abstract}
  Decomposition spaces are simplicial $\infty$-groupoids subject to a certain
  exactness condition, needed to induce a coalgebra structure
  on the space of arrows.  Conservative ULF functors (\culf) between decomposition
  spaces induce coalgebra homomorphisms.  Suitable added finiteness
  conditions define the notion of \M decomposition space, a
  far-reaching generalisation of the notion of \M category of Leroux. 
  In this paper, we show that the
  Lawvere--Menni Hopf algebra of \M intervals,
  which contains the universal \M function (but
  is not induced by a \M
  category), can be realised as the homotopy cardinality of a
  \M decomposition space $\UU$ of all \M intervals, and that in a certain sense $\UU$ is universal
  for \M decomposition spaces and \culf functors.
\end{abstract}

\subjclass[2010]{18G30, 16T10, 06A11; 18-XX, 55Pxx}


\maketitle

\tableofcontents

\setcounter{section}{-1}

\addtocontents{toc}{\protect\setcounter{tocdepth}{1}}

\section*{Introduction}

This paper is the third of a trilogy dedicated to the study of
decomposition spaces and their incidence algebras.  

\medskip

In \cite{GKT:DSIAMI-1} we introduced the notion of decomposition space as a
general framework for incidence algebras and \M inversion.  
(Independently,
Dyckerhoff and Kapranov~\cite{Dyckerhoff-Kapranov:1212.3563},
motivated by geometry, representation theory and homological algebra,
had discovered the same notion, but formulated quite differently.)
A decomposition space is a simplicial $\infty$-groupoid $X$ satisfying
a certain exactness condition, weaker than the Segal condition.  Just as the Segal
condition expresses composition, the new condition expresses
decomposition, and there is an abundance of examples from combinatorics.
It is just the condition needed for a canonical coalgebra
structure to be induced on the slice $\infty$-category over $X_1$.  The comultiplication
is given by the span
$$
X_1 \stackrel{d_1}\longleftarrow X_2 \stackrel{(d_2,d_0)}\longrightarrow X_1\times X_1,
$$
which can be interpreted as saying that
comultiplying an edge $f\in X_1$ returns
the sum of all pairs of edges $(a,b)$ that are the short edges
of a triangle with long edge $f$.
If $X$ is the nerve of a category,
so
$f$ is an arrow,
then the $(a,b)$ are all pairs of arrows such that $b\circ a=f$.

In \cite{GKT:DSIAMI-2} we arrived at the notion of \M decomposition space, a
far-reaching generalisation of the notion of \M category of
Leroux~\cite{Leroux:1975}, by imposing suitable finiteness conditions on
decomposition spaces.  These notions will be recalled below.  

The present paper introduces the \M decomposition space of \M intervals,
subsuming discoveries made by Lawvere in the 1980s,
and establish that it is in a precise sense a universal
\M decomposition space.

\medskip

After Rota \cite{Rota:Moebius} and his collaborators \cite{JoniRotaMR544721} had
demonstrated the great utility of incidence algebras and \M inversion in locally
finite posets, and Cartier and Foata~\cite{Cartier-Foata} had developed a
similar theory for monoids with the finite-decomposition property, it was Leroux
who found the common generalisation, that of \M categories~\cite{Leroux:1975}.
These are categories with two finiteness conditions imposed: the first ensures
that an incidence coalgebra exists; the second ensures a general \M inversion
formula.
Conservative ULF functors (\culf) induce coalgebra 
homomorphisms~\cite{Content-Lemay-Leroux}, \cite{GKT:DSIAMI-1}.

Lawvere (in 1988, unpublished until Lawvere--Menni~\cite{LawvereMenniMR2720184})
observed that there is a universal coalgebra $\HH$ (in fact a Hopf algebra)
spanned by isomorphism classes of \M intervals.  From any incidence coalgebra of
a \M category there is a canonical coalgebra homomorphism to $\HH$, and the \M
inversion formula in the former is induced from a master inversion formula in
$\HH$.

Here is the idea: a {\em \M interval} is a \M category
\relax
with an initial object $0$ and terminal object $\terminal$
(not necessarily distinct).
The category of factorisations of any arrow $a$ in a \M category
$\C$ determines (\cite{Lawvere:statecats}) a \M interval $I(a)$
with $0$ given by the
factorisation $\id$-followed-by-$a$, and $\terminal$ by the
factorisation $a$-followed-by-$\id$.
There is a canonical \culf functor $I(a)\to\C$
sending $0\to \terminal$ to $a$, 
and since the arrow $0\to \terminal$ in $I(a)$ has
the same decomposition structure as the arrow $a$ in $\C$,
the comultiplication of $a$ can be calculated in $I(a)$.

Any collection of \M intervals that is closed under subintervals defines a 
coalgebra, and it
is an interesting integrability condition for such a collection to come from a
single \M category.  
The Lawvere--Menni coalgebra is simply the collection of {\em
all} isomorphism classes of \M intervals.

Now, the coalgebra of \M intervals cannot be the coalgebra of a single Segal
space, because such a Segal space $\UU$ would have to have $\UU_1$ the space
of all \M intervals, and $\UU_2$ the space of all subdivided \M intervals.  But a
\M interval with a subdivision (i.e.~a `midpoint') contains more information than
the two parts of the subdivision: one from $0$ to the midpoint, and one from the
midpoint to $\terminal$:

\begin{center}\begin{texdraw}

  \setunitscale 1
  \arrowheadtype t:V
  \arrowheadsize l:5 w:4
  \move (0 0) 
  \bsegment
  \move (0 0) \clvec (12 10)(28 10)(40 0) 
  \move (0 0) \clvec (12 -10)(28 -10)(40 0)
  \move (0 0) \onedot
  \move (20 0) \onedot
  \move (40 0) \onedot
  \esegment

  \move (70 0)\htext{$\neq$}
  \move (80 0)
  \bsegment
  \move (20 0)
  \clvec (26 7)(34 7)(40 0)
  \move (20 0)
  \clvec (26 -7)(34 -7)(40 0)
  \move (40 0)
  \clvec (46 7)(54 7)(60 0)
  \move (40 0)
  \clvec (46 -7)(54 -7)(60 0)
  \move (20 0) \onedot
  \move (40 0) \onedot
  \move (60 0) \onedot
  \move(60 12)
  \move(60 -12)
  \esegment
\end{texdraw}\end{center}
This is to say that the Segal condition is not satisfied: we have
\begin{eqnarray*}
  \UU_2 & \neq & \UU_1 \times_{\UU_0} \UU_1 .
\end{eqnarray*}

We shall prove that the simplicial space of all intervals and their subdivisions
{\em is} a decomposition space, as suggested by this figure:

\begin{center}\begin{texdraw}

  \setunitscale 0.9
  \arrowheadtype t:V
  \arrowheadsize l:5 w:4
  \move (0 90)
  \move (0 75) 
  \bsegment
  \move (0 0) \clvec (20 16)(40 16)(60 0)
  \move (0 0) \clvec (20 -16)(40 -16)(60 0)
  \move (0 0) \onedot
  \move (20 0) \onedot
  \move (40 0) \onedot
  \move (60 0) \onedot
  \esegment
  
  \move (80 -3) \rlvec (0 6)
  \move (80 0) \ravec (40 0)
  \move (80 72) \rlvec (0 6)
  \move (80 75) \ravec (40 0)
  \move (27 55) \rlvec (6 0)
  \move (30 55) \ravec (0 -35)
  \move (167 58) \rlvec (6 0)
  \move (170 58) \ravec (0 -40)
  
  \move ( 60 55) \rlvec (5 0) \rlvec (0 5)
  \move (0 0)
  \bsegment
  \move (0 0) \clvec (20 16)(40 16)(60 0)
  \move (0 0) \clvec (20 -16)(40 -16)(60 0)
  \move (0 0) \onedot
  \move (40 0) \onedot
  \move (60 0) \onedot
  \esegment
  
  \move (140 75) 
  \bsegment
  \move (0 0) \clvec (12 10)(28 10)(40 0) \clvec (46 7)(54 7)(60 0)
  \move (0 0) \clvec (12 -10)(28 -10)(40 0) \clvec (46 -7)(54 -7)(60 0)
  \move (0 0) \onedot
  \move (20 0) \onedot
  \move (40 0) \onedot
  \move (60 0) \onedot
  \esegment

    \move (140 0) 
  \bsegment
  \move (0 0) \clvec (12 10)(28 10)(40 0) \clvec (46 7)(54 7)(60 0)
  \move (0 0) \clvec (12 -10)(28 -10)(40 0) \clvec (46 -7)(54 -7)(60 0)
  \move (0 0) \onedot
  \move (40 0) \onedot
  \move (60 0) \onedot
  \esegment\move (0 -12)
\end{texdraw}\end{center}
meant to indicate that this diagram is a pullback:
$$\xymatrix @C=44pt @R=30pt{
   \UU_3 \drpullback \ar[r]^-{(d_3,d_0d_0)}\ar[d]_{d_1} & \UU_2\times_{\UU_0} \UU_1 \ar[d]^{d_1 \times 
   \id} \\
   \UU_2 \ar[r]_-{(d_2,d_0)} & \UU_1 \times_{\UU_0} \UU_1
}$$
which in turn is one of the conditions involved in the decomposition-space 
axiom. 

While the ideas outlined have a clear intuitive content, a considerable amount of
machinery is needed actually to construct the universal decomposition space, and to get
sufficient hold of its structural properties to prove the desired results about
it.  We first work out the theory without finiteness conditions, which we impose
at the end.

\medskip

Let us outline our results in more detail.

First of all we need to develop a theory of intervals in the framework of
decomposition spaces.  Lawvere's idea \cite{Lawvere:statecats} is that
to an arrow one may associate its category of factorisations, which is an
interval.  To set this up, we exploit factorisation systems and adjunctions
derived from them, and start out in Section~\ref{sec:fact} with some general
results about factorisation systems, some results of which are already available
in Lurie's book~\cite{Lurie:HTT}.  Specifically we describe a situation in which
a factorisation system lifts across an adjunction to produce a new factorisation
system, and hence a new adjunction.

Before coming to intervals in Section~\ref{sec:fact-int}, we need flanked decomposition
spaces (Section~\ref{sec:flanked}): these are certain presheaves on the category $\Xi$
of nonempty finite linear orders with a top and a bottom element.  The
$\infty$-category of flanked decomposition spaces features the important {\em
stretched-cartesian} factorisation system, where `stretched' is to be thought of as
endpoint-preserving, and cartesian is like `distance-preserving'.  There is also
the basic adjunction between decomposition spaces and flanked decomposition
spaces, which in fact is the double decalage construction (this is interesting
since decalage already plays an important part in the theory of decomposition 
spaces~\cite{GKT:DSIAMI-1}). Intervals are first
defined as certain flanked decomposition spaces which are contractible in degree
$-1$ (this condition encodes an initial and a terminal object) (\ref{aInt}), and
via the basic adjunction we obtain the definitive $\infty$-category of intervals
as a full subcategory of the $\infty$-category of complete decomposition spaces
(\ref{towards-Int}); it features the stretched-\culf factorisation system
(\ref{fact-Int}), which extends the active-inert
(a.k.a.~generic-free) factorisation system on
$\simplexcategory$ (\ref{IntDelta}).  The factorisation-interval construction can now
finally be described (Theorem~\ref{Thm:I}) as a coreflection from complete
decomposition spaces to intervals (or more precisely, on certain coslice
$\infty$-categories).  We show that every interval is a Segal space
(\ref{prop:i*flanked=Segal}).  The simplicial space $\UU$ of
intervals (which lives in a bigger universe) can finally (\ref{U}) be defined
very formally in terms of a natural right fibration over $\simplexcategory$ whose total space has
objects stretched interval maps from an ordinal.  In plain words, $\UU$ consists of
subdivided intervals.

\medskip

With these various preliminary technical constructions having taken up
two thirds of the paper, we can finally state and prove the main results:

\medskip

\noindent
{\bf Theorem~\ref{UcompleteDecomp}.} {\em $\UU$ is a complete decomposition space.}

\medskip

\noindent
The factorisation-interval construction yields, for every decomposition space $X$, a canonical functor $X \to \UU$,
called the {\em classifying map}.

\medskip

\noindent
{\bf Theorem~\ref{thm:IU=cULF}.} {\em The classifying map is \culf.}

\medskip

We conjecture that $\UU$ is universal for complete 
decomposition spaces and \culf maps, and prove the following partial result:

 \medskip
 
 \noindent
{\bf Theorem~\ref{thm:connected}.} {\em 
For each complete decomposition space $X$, the space 
  $\Map_{\cDecomp^{\culfsymb}}(X,\UU)$ is connected.}
  
  \medskip

We finish in Section~\ref{sec:MI} by imposing the \M condition, obtaining
the corresponding finite results.
A {\em \M interval} is an interval which is \M as a decomposition space.
We show that every \M interval is a Rezk complete Segal space 
(\ref{prop:Mint=Rezk}).
There is a decomposition space of {\em all}\/ \M intervals, and it is shown to be small.

Our final theorem is now:

 \medskip
 
 \noindent
{\bf Theorem~\ref{thm:MI=M}.} {\em The decomposition space of all \M intervals 
is \M.}

\medskip

\noindent
It follows that it admits a \M inversion formula with coefficients in finite 
$\infty$-groupoids
or in $\Q$, and since every \M decomposition space admits a 
canonical \culf functor to it, we find that \M inversion in every incidence 
algebra (of a \M decomposition space) is induced from this master formula.

\bigskip

\noindent
{\bf Note.}
This work was originally Section 7 of a large single
manuscript 
{\em Decomposition spaces, incidence algebras 
  and \M inversion} \cite{GKT:1404.3202}.
For publication, this manuscript has been split into six papers:
  \begin{enumerate}
    \item[(0)]  Homotopy linear algebra \cite{GKT:HLA}
    
    \item  Decomposition spaces, incidence algebras and \M inversion I: basic 
    theory \cite{GKT:DSIAMI-1}
  
    \item  Decomposition spaces, incidence algebras and \M inversion II:
    completeness, length filtration, and finiteness \cite{GKT:DSIAMI-2}
  
    \item  Decomposition spaces, incidence algebras and \M inversion III:
    the decomposition space of \M intervals [present paper]
  
    \item  Decomposition spaces and restriction species \cite{GKT:restriction}
  
    \item  Decomposition spaces in combinatorics \cite{GKT:ex}
    
  \end{enumerate}
  
\bigskip

\noindent
{\bf Acknowledgments.}
We wish to thank Andr\'e Joyal and David Gepner
for many enlightening discussions that helped shape
this work, and the referee for comments that have improved the exposition.

\section{Decomposition spaces}

We briefly recall from \cite{GKT:DSIAMI-1} the notions of decomposition
space and \culf functors, and a few key results needed.

\begin{blanko}{The setting: $\infty$-categories.}
  We work in the $\infty$-category of $\infty$-categories, and refer to
  Lurie's {\em Higher Topos Theory}~\cite{Lurie:HTT} for background.
  Thanks to the monumental effort of Joyal~\cite{Joyal:qCat+Kan},
  \cite{Joyal:CRM} and Lurie~\cite{Lurie:HTT}, it is now possible to work
  model-independently, at least as long as the category theory involved is
  not too sophisticated.  This is the case in the present work, where most
  of the constructions are combinatorial, dealing as they do with various
  configurations of $\infty$-groupoids, and it is feasible to read most of
  the paper substituting the word set for the word $\infty$-groupoid.  In
  fact, even at that level of generality, the results are new and
  interesting.

  Working model-independently has a slightly different flavour than many of
  the arguments in the works of Joyal and Lurie, who, in order to bootstrap
  the theory and establish all the theorems we now harness, had to work in
  the category of simplicial sets with the Joyal model structure.  For
  example, throughout when we refer to a slice $\infty$-category $\CC_{/X}$
  (for $X$ an object of an $\infty$-category $\CC$), we only refer to an
  $\infty$-category determined up to equivalence of $\infty$-categories by
  a certain universal property (Joyal's insight of defining slice
  categories as adjoint to a join operation~\cite{Joyal:qCat+Kan}).  In the
  Joyal model structure for quasi-categories, this category can be
  represented by an explicit simplicial set.  However, there is more than
  one possibility, depending on which explicit version of the join operator
  is employed (and of course these are canonically equivalent).  In the
  works of Joyal and Lurie, these different versions are distinguished, and
  each has some technical advantages.  In the present work we shall only
  need properties that hold for both, and we shall not distinguish between
  them.
\end{blanko}

\begin{blanko}{Linear algebra with coefficients in $\infty$-groupoids \cite{GKT:HLA}.}
  Let $\Grpd$ denote the $\infty$-category of $\infty$-groupoids.
  The slice $\infty$-categories $\Grpd_{/I}$ form the objects
  of a symmetric monoidal $\infty$-category $\LIN$, described in
  detail in \cite{GKT:HLA}: the morphisms are the linear functors,
  meaning that they preserve homotopy sums, or equivalently indeed
  all colimits.  Such functors are given by spans: the span
  $$
  I \stackrel p \leftarrow M \stackrel q \to J$$
  defines the linear functor
  $$
  q\lowershriek \circ p\upperstar : \Grpd_{/I}  \longrightarrow  \Grpd_{/J} 
  $$
  given by pullback along $p$ followed by composition with $q$.
  The $\infty$-category $\LIN$ can play the role of the category of
  vector spaces, although to be strict about that interpretation,
  finiteness conditions should be imposed, as we do later in this paper 
  (Section~\ref{sec:MI}).
  The symmetric monoidal structure on $\LIN$ is given on objects by
  $$
  \Grpd_{/I} \tensor \Grpd_{/J} = \Grpd_{I\times J} ,
  $$
  just as the tensor product of vector spaces with bases indexed by sets $I$ 
  and $J$ is the vector space with basis indexed by $I\times J$.
  The neutral object is $\Grpd$.
\end{blanko}

\begin{blanko}{Active and inert maps (generic and free maps).}\label{generic-and-free}
  The category $\simplexcategory$ of nonempty finite ordinals and monotone maps
  has an active-inert factorisation system:
  an arrow $a: [m]\to [n]$ in $\simplexcategory$ is \emph{active} 
    (also called {\em generic})
  when it preserves end-points, $a(0)=0$ and
  $a(m)=n$; and it is \emph{inert} 
  (also called {\em free})
  if it is distance preserving,
  $a(i+1)=a(i)+1$ for $0\leq i\leq
  m-1$.  The active maps are generated by the codegeneracy maps and
  the inner coface maps, while the inert maps are generated by the outer
  coface maps.  Every morphism in $\simplexcategory$ factors uniquely as an
  active
  map followed by an inert map.

  (The notions of generic and free maps are general notions in category theory,
  introduced by Weber \cite{Weber:TAC13,Weber:TAC18}, who extracted the
  notions from earlier work of Joyal~\cite{Joyal:foncteurs-analytiques}; a
  recommended entry point to the theory is
  Berger--Melli\`es--Weber~\cite{Berger-Mellies-Weber:1101.3064}.
  The more recent terminology `active/inert' is due to 
  Lurie~\cite{Lurie:HA}, and is more suggestive for the role the two 
  classes of maps play.)
\end{blanko}

\begin{lemma}\label{genfreepushout}
  Active and inert maps in $\simplexcategory$ admit pushouts along each other, and the 
  resulting maps are again active and inert.
\end{lemma}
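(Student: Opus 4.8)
The plan is to build the pushout explicitly in terms of ordinal sums; this simultaneously shows the pushout exists -- a point worth stressing, since $\Delta$ is not cocomplete -- and identifies it. Recall that a free map $f\colon[k]\to[n]$ is just an interval inclusion: it is determined by the value $f(0)$, with $f(i)=f(0)+i$, and so it exhibits $[n]$ as an ordinal sum $[n]=A\star[k]\star B$, where $A=\{0,\dots,f(0)-1\}$ and $B=\{f(k)+1,\dots,n\}$ are the (possibly empty) ``flanks''. A generic map, by definition, preserves endpoints. Since the statement is symmetric in the two classes, it suffices to treat a span $[m]\xleftarrow{g}[k]\xrightarrow{f}[n]$ with $g$ generic and $f$ free.

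First I would write $[n]=A\star[k]\star B$ via $f$ and propose as pushout the ordinal $[p]:=A\star[m]\star B$, equipped with the free inclusion $f'\colon[m]\hookrightarrow[p]$ of the middle block and with $g':=\id_A\star g\star\id_B\colon[n]\to[p]$. Because $g$ preserves endpoints, so does $g'$, hence $g'$ is generic; and $f'$ is patently free. Commutativity $g'f=f'g$ is immediate, both composites sending $i\in[k]$ to $g(i)$ viewed in the middle block of $[p]$; and $g'$ is the pushout of $g$ along $f$ while $f'$ is the pushout of $f$ along $g$, so once the universal property is checked the lemma (including the claim that the resulting maps are again generic and free) follows.

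The substance is that universal property. Given a cocone $u\colon[m]\to[q]$, $v\colon[n]\to[q]$ with $ug=vf$, the map $v$ dictates what $w\colon[p]\to[q]$ must do on the flank blocks $A$ and $B$, while $u$ dictates $w$ on the middle block $[m]$; as these three blocks cover $[p]$ this forces $w$ and gives uniqueness. The only thing to verify is that this piecewise rule is monotone, i.e.\ that $v(\max A)\le u(0)$ and $u(m)\le v(\min B)$, and this is exactly where genericity of $g$ enters: $u(0)=u(g(0))=v(f(0))$ (the first equality because $g(0)=0$, the second by $ug=vf$), and $v(f(0))\ge v(\max A)$ since $v$ is monotone and $\max A<f(0)$ in $[n]$; symmetrically $u(m)=u(g(k))=v(f(k))\le v(\min B)$, using $g(k)=m$. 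One then checks $wf'=u$ and $wg'=v$ block by block. Finally $[p]$ is a nonempty finite ordinal, so it and $f',g'$ all lie in $\Delta$.

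I expect there is no real obstacle here beyond bookkeeping: keeping straight the flank decomposition of $[n]$ induced by $f$, the three blocks of $[p]$, and the identifications involved. The conceptual content is simply that endpoint-preservation of the generic leg $g$ is precisely what allows the two flanks of $[n]$ to glue compatibly onto $[m]$; this argument is self-contained and more elementary than, and does not require, the general theory of generic/free (equivalently active/inert) factorisation systems.
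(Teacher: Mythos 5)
Your proof is correct. Note that the paper states Lemma~\ref{genfreepushout} without proof, recalling it as part of the standard generic--free factorisation-system package from the first paper of the series and from Weber's general theory, so there is no in-text argument to compare against; what you give is the standard explicit construction, carried out completely. The key points all check out: writing $[n]=A\star[k]\star B$ via the free (distance-preserving) map $f$ and setting $[p]=A\star[m]\star B$, the map $g'=\id_A\star g\star\id_B$ is monotone and endpoint-preserving precisely because $g(0)=0$ and $g(k)=m$, while $f'$ is an interval inclusion, hence free; the mediating map of a cocone $(u,v)$ is forced blockwise (via $f'$ on the middle block and via $g'$, which is bijective onto the two flanks), and its monotonicity at the two junctions is exactly the computation $v(f(0)-1)\le v(f(0))=u(g(0))=u(0)$ and $u(m)=u(g(k))=v(f(k))\le v(f(k)+1)$, which is where genericity of $g$ enters. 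Since you verify the universal property against arbitrary cocones in $\Delta$, this is genuinely a pushout in $\Delta$ (a point worth making, as $\Delta$ lacks general pushouts), and both halves of the statement --- existence and stability of the two classes under pushout along each other --- follow. Your route is more elementary and self-contained than invoking the general theory of generic/free (active/inert) maps, at the cost of some bookkeeping; the cited general theory buys the same statement uniformly for other categories with such factorisation systems, which is irrelevant for the use made of the lemma here.
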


\begin{blanko}{Decomposition spaces \cite{GKT:DSIAMI-1}.}
  A simplicial space $X:\simplexcategory\op\to\Grpd$ is called a {\em decomposition space}
  when it takes active-inert pushouts in $\simplexcategory$ to pullbacks.

  Every Segal space is a decomposition space.  The main construction in 
  the present paper, the decomposition space of intervals, is an example
  which is not a Segal space.
\end{blanko}

The notion of decomposition space can be seen as an abstraction of coalgebra: it
is precisely the condition required to obtain a counital coassociative
comultiplication on $\Grpd_{/X_1}$.  The following is the main 
theorem of \cite{GKT:DSIAMI-1}.
\begin{theorem}\label{thm:comultcoass} \cite{GKT:DSIAMI-1}
  For $X$ a decomposition space, the slice $\infty$-category $\Grpd_{/X_1}$ has
  the structure of a strong homotopy comonoid in the symmetric monoidal
  $\infty$-category $\LIN$, with the comultiplication defined by the span
  $$
  X_1 \stackrel{d_1}\longleftarrow X_2 \stackrel{(d_2,d_0)}
  \longrightarrow X_1 \times X_1 . 
  $$
\end{theorem}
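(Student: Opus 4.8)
The plan is to verify the comonoid axioms directly in the span calculus, the decomposition-space condition entering precisely at the points where certain squares have to be cartesian. Write the candidate comultiplication as the linear functor $c := (d_2,d_0)_!\circ d_1^{\ast}\colon \Grpd_{/X_1}\to\Grpd_{/X_1}\otimes\Grpd_{/X_1}=\Grpd_{/X_1\times X_1}$ and the candidate counit as $\varepsilon := t_!\circ s_0^{\ast}\colon\Grpd_{/X_1}\to\Grpd_{/1}=\Grpd$, with $t\colon X_0\to 1$. Since composition of linear functors is composition of spans (iterated pullback), and two spans present the same linear functor exactly when their apexes agree compatibly with the legs, every instance of an axiom becomes an identification of spans, for which the only inputs are: the generic--free pullback squares supplied by the decomposition-space axiom; the routine pullback squares coming from simplicial identities; and the Beck--Chevalley equivalences $f^{\ast}g'_!\simeq g_!f'^{\ast}$ attached to cartesian squares.

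\emph{Coassociativity.} I would write out the span representing $(c\otimes\id)\circ c$ as an iterated pullback; its apex is the fibre product $X_2\times_{X_1}X_2$ taken over $d_2$ on one factor and $d_1$ on the other, and exactly this fibre product is identified with $X_3$ by the decomposition-space axiom, via $x\mapsto(d_1x,d_3x)$ --- the square
\[
\begin{array}{ccc}
X_3 & \xrightarrow{\;d_1\;} & X_2\\
{\scriptstyle d_3}\downarrow & & \downarrow{\scriptstyle d_2}\\
X_2 & \xrightarrow[\;d_1\;]{} & X_1
\end{array}
\]
being the image under $X$ of a generic--free pushout in $\Delta$. Tracking the legs through the simplicial identities shows this span to be the ``degree-$3$ span'' $X_1\xleftarrow{\,d_1d_1\,}X_3\xrightarrow{\,(d_2d_3,\,d_0d_3,\,d_0d_0)\,}X_1\times X_1\times X_1$. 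Symmetrically $(\id\otimes c)\circ c$ has apex $X_2\times_{X_1}X_2$ formed over $d_0$ and $d_1$, again identified with $X_3$ by the axiom (now via $x\mapsto(d_2x,d_0x)$), and the identities $d_1d_2=d_1d_1$, $d_2d_2=d_2d_3$, $d_2d_0=d_0d_3$, $d_0d_1=d_0d_0$ show it to be the \emph{same} degree-$3$ span. Hence $(c\otimes\id)\circ c\simeq(\id\otimes c)\circ c$.

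\emph{Counitality.} For $(\varepsilon\otimes\id)\circ c$ the apex is the fibre product of $d_2\colon X_2\to X_1$ with $s_0\colon X_0\to X_1$; the decomposition-space axiom --- in the guise of the generic--free pushout underlying the square with edges $s_0\colon X_1\to X_2$, $d_1\colon X_1\to X_0$, $d_2\colon X_2\to X_1$, $s_0\colon X_0\to X_1$ --- identifies this apex with $X_1$. Using $d_1s_0=\id$ and $d_0s_0=\id$ one sees both legs become identities, so $(\varepsilon\otimes\id)\circ c\simeq\id$; the mirror-image argument with $s_1$ and $d_1s_1=\id$, $d_2s_1=\id$, $d_0s_1=s_0d_0$ gives $(\id\otimes\varepsilon)\circ c\simeq\id$.

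\emph{Higher coherence, and the main obstacle.} The above yields a comonoid ``up to the first coherences'' only; to obtain a genuine strong homotopy comonoid object of $\LIN$ one must supply all higher homotopies coherently. I would do this by assembling the whole family $\{\Grpd_{/X_n}\}_{n\ge0}$, with the linear functors induced by simplicial operators (a generic map contributing an upper-star, a free map a lower-shriek), into a single coherent diagram indexed by a suitable category built from the generic--free factorisation system on $\Delta$; the coherence of this diagram again reduces to the fact that \emph{every} generic--free pushout square --- not merely the low-degree ones used above --- is carried to a pullback by the decomposition-space axiom, so that the requisite Beck--Chevalley equivalences are themselves coherent. The two unit laws and the single coassociativity identity are bookkeeping once the relevant squares are identified; the real difficulty is twofold. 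First, recognising that exactly the generic--free pushout squares are what one needs --- in particular making sure that no Segal-type pullback, which would fail for a non-Segal decomposition space, is secretly invoked. Second, and this is the genuine $\infty$-categorical content, carrying out the coherence packaging of the last step: organising infinitely many higher homotopies into an honest comonoid object rather than verifying a finite list of equations. It is this second point that prevents the theorem from being a purely formal consequence of the span calculus.
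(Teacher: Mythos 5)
First, a point of bookkeeping: this theorem is not proved in the present paper at all --- it is stated with the citation \cite{GKT:DSIAMI-1} and imported as the main theorem of Part~I of the trilogy, so the comparison has to be with the proof given there. That said, your identification of the comultiplication span and of the counit span $X_1 \stackrel{s_0}\longleftarrow X_0 \longrightarrow 1$ is the right one, and your low-degree verifications are correct and coincide with the computations underlying Part~I: both iterated comultiplications have apex a fibre product $X_2\times_{X_1}X_2$ which the decomposition-space axiom identifies with $X_3$ (via the squares with commutativities $d_2d_1=d_1d_3$ and $d_0d_2=d_1d_0$, which are indeed images of generic--free pushouts and not Segal-type squares), and both composites are then the canonical span $X_1 \stackrel{d_1d_1}\longleftarrow X_3 \longrightarrow X_1\times X_1\times X_1$ picking out the three principal edges; the counit laws likewise use only the degeneracy--face pullbacks. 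You are also right, and it is worth saying explicitly, that no Segal condition is invoked anywhere in these identifications.

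The gap is the step you yourself flag. The theorem asserts a \emph{strong homotopy} comonoid, i.e.\ the full tower of higher coherences, and ``I would assemble the family $\{\Grpd_{/X_n}\}$ into a single coherent diagram indexed by a suitable category built from the generic--free factorisation system'' is a plan, not a construction. To close it you would have to (i) fix a precise formalism for comonoid objects in the symmetric monoidal $\infty$-category $\LIN$ (say, coalgebra objects in Lurie's sense, encoded over $\Delta\op$ or by an $\infty$-operad); (ii) actually produce the functor out of your proposed index category from the simplicial space $X$, specifying where upper-stars and lower-shrieks are taken and supplying the comparison equivalences; and (iii) show that the Beck--Chevalley equivalences attached to the generic--free pullbacks assemble coherently --- this last point is not automatic from their existence square-by-square, since a compatible choice of infinitely many higher homotopies must be produced all at once rather than checked one diagram at a time. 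This packaging is exactly where the content of the cited theorem lies and where Part~I invests its effort; what your argument as written establishes is a comonoid in the homotopy category of $\LIN$, which is strictly weaker than the statement being quoted here.
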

If $X$ is the nerve of a locally finite category (for example a poset),
then (the cardinality of) this comultiplication is that of the classical 
incidence coalgebra,
$$
\Delta(f) = \sum_{b \circ a=f} a \tensor b .
$$

\begin{blanko}{\culf maps.}
  For the present purposes, the relevant notion of morphism is that of conservative ULF map: A
  simplicial map is called {\em ULF} (unique lifting of factorisations) if
  it is cartesian on active face maps, and it is called {\em conservative}
  if cartesian on degeneracy maps.  We write {\em \culf} for conservative
  and ULF, that is, cartesian on all active maps.
  
  The \culf maps induce coalgebra homomorphisms.
\end{blanko}

\begin{blanko}{Decalage.}
  (See Illusie~\cite{Illusie2}).  Given a simplicial space $X$ as in the top
  row of the following diagram, the {\em lower dec} $\Decbot{X}$ is a new
  simplicial space (the bottom row of the diagram) obtained by deleting
  $X_0$ and shifting everything one place down, deleting also all $d_0$
  face maps and all $s_0$ degeneracy maps.  It comes equipped with a
  simplicial map, the {\em dec map}, $d_\bot:\Decbot{X}\to X$
  given by the original $d_0$:

$$
\xymatrix@C+1em{
X_0  
\ar[r]|(0.55){s_0} 
&
\ar[l]<+2mm>^{d_0}\ar[l]<-2mm>_{d_1} 
X_1  
\ar[r]<-2mm>|(0.6){s_0}\ar[r]<+2mm>|(0.6){s_1}  
&
\ar[l]<+4mm>^(0.6){d_0}\ar[l]|(0.6){d_1}\ar[l]<-4mm>_(0.6){d_2}
X_2 
\ar[r]<-4mm>|(0.6){s_0}\ar[r]|(0.6){s_1}\ar[r]<+4mm>|(0.6){s_2}  
&
\ar[l]<+6mm>^(0.6){d_0}\ar[l]<+2mm>|(0.6){d_1}\ar[l]<-2mm>|(0.6){d_2}\ar[l]<-6mm>_(0.6){d_3}
X_3 
\ar@{}|\cdots[r]
&
\\
\\
X_1  \ar[uu]_{d_0}
\ar[r]|(0.55){s_1} 
&
\ar[l]<+2mm>^{d_1}\ar[l]<-2mm>_{d_2} 
X_2  \ar[uu]_{d_0}
\ar[r]<-2mm>|(0.6){s_1}\ar[r]<+2mm>|(0.6){s_2}  
&
\ar[l]<+4mm>^(0.6){d_1}\ar[l]|(0.6){d_2}\ar[l]<-4mm>_(0.6){d_3}
X_3 \ar[uu]_{d_0}
\ar[r]<-4mm>|(0.6){s_1}\ar[r]|(0.6){s_2}\ar[r]<+4mm>|(0.6){s_3}  
&
\ar[l]<+6mm>^(0.6){d_1}\ar[l]<+2mm>|(0.6){d_2}\ar[l]<-2mm>|(0.6){d_3}\ar[l]<-6mm>_(0.6){d_4}
X_4 \ar[uu]_{d_0}
\ar@{}|\cdots[r]
&
}
$$

Similarly, the upper dec, denoted $\Dectop{X}$ is obtained by instead 
deleting, in each degree, the last face map $d_\top$ and the last degeneracy map 
$s_\top$.

  The functor $\Decbot{}$ can be described more conceptually as follows (see
  Lawvere~\cite{Lawvere:ordinal}).  There is an `add-bottom' endofunctor
  $b:\simplexcategory\to \simplexcategory$, which sends $[k]$ to $[k+1]$ by adding a new bottom
  element.  This is in fact a monad; the unit $\varepsilon: \Id \Rightarrow b$ is
  given by the bottom coface map $d^\bot$.  The lower dec is given by
  precomposition with $b$:
  $$
  \Decbot{X} = b\upperstar X
  $$
  Hence $\Decbot{}$ is a comonad, and its counit is the bottom face map $d_\bot$.

  Similarly, the upper dec is obtained from the `add-top' monad on $\simplexcategory$. 
  Below we shall exploit crucially the combination of the two comonads.
  
  The following result from \cite[Theorem 4.10]{GKT:DSIAMI-1} will be 
  invoked several times:
\end{blanko}

\begin{thm}\label{thm:decomp-dec-segal}
  $X$ is a decomposition space if and only if $\Dectop{X}$ and $\Decbot{X}$ are
  Segal spaces, and the dec maps $d_\top : \Dectop{X} \to X$ and
  $d_\bot :
  \Decbot{X} \to X$ are \culf.
\end{thm}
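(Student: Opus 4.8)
The plan is to put the decomposition-space axiom, the two Segal conditions, and the two cULF conditions into a single common form --- that of a small generating family of pullback squares built from one generic generator pushed past one free generator --- and then to observe that all three translations land on that family. First I would record the reduction of the decomposition-space axiom: $X$ is a decomposition space if and only if $X$ carries to pullbacks the squares obtained by pushing a single inner coface $d^i$ (with $0<i<n$), or a single codegeneracy $s^j$, along an outer coface $d^\bot=d^0$ or $d^\top$. By Lemma~\ref{genfreepushout} these pushouts exist with generic and free output legs; inspecting the four cases shows moreover that the output legs are again \emph{single} generators of the same kind. Consequently an arbitrary generic-free pushout --- obtained by factoring its generic leg into codegeneracies and inner cofaces, and its free leg into outer cofaces --- is the pasting of a staircase of such atomic squares, and since a pasting of pullbacks is a pullback it suffices to handle the atomic ones. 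Unwinding the (co)simplicial identities, the atomic coface squares are, for $0<i<n$,
$$
\xymatrix@C=1.4em@R=1.4em{X_{n+1}\ar[r]^{d_{i+1}}\ar[d]_{d_0} & X_n\ar[d]^{d_0}\\ X_n\ar[r]_{d_i}& X_{n-1}}
\qquad\text{and}\qquad
\xymatrix@C=1.4em@R=1.4em{X_{n+1}\ar[r]^{d_{i}}\ar[d]_{d_\top} & X_n\ar[d]^{d_i}\\ X_n\ar[r]_{d_\top}& X_{n-1}} ,
$$
together with the two analogous families in which $s^j$ replaces $d^i$.

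Next I would set up the two dictionaries. Using the descriptions of the decalages as restrictions along the add-bottom and add-top functors on $\Delta$ (so $\Dec_\bot X=b\upperstar X$), one writes out their face and degeneracy operators as an index shift of those of $X$, and the counits as $d_\bot=d_0$ and $d_\top$ in each degree. Invoking the reformulation --- equivalent to the Segal condition by a routine induction --- that a simplicial space $Y$ is Segal iff the square exhibiting $Y_n$ as $Y_{n-1}\times_{Y_{n-2}}Y_{n-1}$ via the outer faces is a pullback for all $n\ge 2$, one checks that, up to transposition, the degree-$n$ Segal square of $\Dec_\bot X$ is the second atomic square above with $i=1$, and that of $\Dec_\top X$ is the first with $i=n-1$. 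Dually, since a simplicial map is cULF precisely when it is cartesian on a generating set of generic maps, i.e.\ on the inner cofaces and the codegeneracies, one checks that the naturality square of $d_\bot$ at the inner coface $d^i$ is the first atomic square, its naturality square at the codegeneracy $s^j$ the corresponding degenerate atomic square, and symmetrically for $d_\top$ against the second family. In particular the cULF-naturality squares of $d_\bot$ and $d_\top$ exhaust all four atomic families, and the Segal squares of the two decalages are sub-families of them.

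With the dictionaries in place the theorem assembles. If $X$ is a decomposition space then every atomic square is a pullback, so the decalages are Segal and the counits are cULF. Conversely, if the decalages are Segal and the counits cULF, then each atomic square is either a Segal square of a decalage or a naturality square of a counit at a generic generator, hence a pullback; by the reduction $X$ is then a decomposition space. I expect the main obstacle to lie in these two dictionaries: keeping the index shifts of the decalages straight and matching the (co)simplicial identities carefully enough that the Segal squares and the counit-naturality squares land exactly on the atomic pushout squares --- in particular, seeing that letting $i$ range over all inner positions in the cULF condition reproduces the whole atomic family, and that the degenerate atomic squares needed for the reduction are accounted for by the cULF conditions at codegeneracies and not only by those at inner cofaces. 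A subsidiary point, internal to the reduction, is to check that pushing one generic generator past one free generator returns single generators, so that the staircase decomposing a general generic-free pushout really is built from atomic squares.
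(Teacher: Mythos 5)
This theorem is only quoted in the present paper from \cite{GKT:DSIAMI-1} (Theorem 4.11 there), and your argument is essentially the proof given in that companion paper: reduce the decomposition-space axiom to the generating squares obtained by pushing a single inner coface or codegeneracy along an outer coface, then observe that these are precisely the naturality squares witnessing cULFness of $d_\bot$ and $d_\top$ at the generic generators, with the Segal squares of $\Dec_\bot(X)$ and $\Dec_\top(X)$ occurring among them (the cases $i=1$ and $i=n-1$). Your bookkeeping and the two points you flag (single generators push forward to single generators, and the degeneracy squares being supplied by cartesianness at codegeneracies rather than at inner cofaces) are exactly the checks needed, and they go through, so the proposal is correct.
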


\begin{blanko}{Complete decomposition spaces \cite{GKT:DSIAMI-2}.} 
  A decomposition space $X: \simplexcategory\op\to\Grpd$ is {\em complete}
  when $s_0: X_0 \to X_1$ is a monomorphism (i.e.~is $(-1)$-truncated).
  It follows from the decomposition space axiom that in this case
  {\em all} degeneracy maps are monomorphisms.
  
  A Rezk complete Segal space is a complete decomposition space.
  The motivation for the completeness notion is to get a good notion
  of nondegenerate simplices, in turn needed to obtain the \M inversion
  principle.  The completeness condition is also needed to formulate
  the `tightness' condition, locally finite length, which we come to
  in \ref{bla:tight} below.
\end{blanko}

\section{Factorisation systems and cartesian fibrations}
\label{sec:fact}

In this section, which makes no reference to decomposition spaces, we prove some general
results in category theory to the effect of lifting factorisation systems
along an adjunction, and the like.
For background to this section, see Lurie~\cite[\S~5.2.8]{Lurie:HTT}.

\begin{blanko}{Factorisation systems.}
  A {\em factorisation system} on an $\infty$-category $\DD$ consists of two classes $E$ and $F$
  of maps, that we shall depict as $\onto$ and $\rat$, such that
  \begin{enumerate}
    \item The classes $E$ and $F$ are closed under equivalences.
  
    \item The classes $E$ and $F$ are orthogonal, $E\bot F$.  That is, given $e\in E$ and $f\in F$, for every solid square
    $$\xymatrix{
       \cdot \ar[r]\ar@{->>}[d]_e & \cdot \ar@{ >->}[d]^f \\
       \cdot \ar@{-->}[ru]\ar[r] & \cdot
    }$$
    the space of fillers is contractible.

    \item Every map $h$ admits a factorisation
  $$
  \xymatrix@R-1em@C+1em{\cdot\ar[rr]^h\ar@{->>}[rd]_e&&\cdot\\&\cdot\ar@{ >->}[ru]_f}
  $$
  with $e\in E$ and $f\in F$.
  \end{enumerate}

  (Note that in \cite[Definition 5.2.8.8]{Lurie:HTT},
  the first condition is given as `stability under formation of retracts'.  In 
  fact this stability follows from the three conditions above.  Indeed, suppose 
  $h \bot F$; 
  factor $h = f \circ e$ as above.  Since $h\bot f$, there is a diagonal filler in
  $$\xymatrix{
     \cdot \ar@{->>}[r]^e\ar[d]_h & \cdot \ar@{ >->}[d]^f \\
     \cdot \ar@{-->}[ru]^-d \ar[r]_{\id} & \cdot
  }$$
  Now $d$ belongs to ${}^\bot F$ since $e$ and $h$ do, and $d$ belongs to
  $E^\bot$ since $f$ and $\id$ do.  Hence $d$ is an equivalence, and therefore
  $h \in E$, by equivalence stability of $E$.  Hence $E= {}^\bot F$,
  and is therefore closed under retracts.  Similarly for $F$.  It also follows 
  that the two classes are closed under composition.)
\end{blanko}

\begin{blanko}{Set-up.}\label{setup}
  In this section, fix an $\infty$-category $\DD$ with a factorisation system
  $(E,F)$ as above.  
Let $\Ar(\DD)= \Fun(\Delta[1],\DD)$, 
 whose $0$-simplices we depict vertically, then the domain projection $\Ar(\DD) 
  \to\DD$ (induced by the inclusion $\{0\} \into \Delta[1]$) is a cartesian 
  fibration; the cartesian arrows are the squares of the form
  $$\xymatrix{
  \cdot \ar[r] \ar[d] & \cdot \ar[d] \\
  \cdot 
  \ar[r]^\sim & \cdot}
  $$

  Let $\Ar^E(\DD) \subset \Ar(\DD)$ denote the full subcategory spanned by the
  arrows in the left-hand class $E$.
\end{blanko}

\begin{lemma}\label{ArED}
  The domain projection $\Ar^E(\DD) \to \DD$ is a cartesian fibration. The cartesian
  arrows in $\Ar^E(\DD)$ are given by squares of the form
  $$\xymatrix{
  \cdot \ar[r] \ar@{->>}[d] & \cdot \ar@{->>}[d] \\
  \cdot 
  \ar@{ >->}[r] & \cdot}$$
\end{lemma}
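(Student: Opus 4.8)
The plan is to exploit the factorisation system $(E,F)$ on $\DD$ to produce $p$-cartesian lifts inside $\Ar^E(\DD)$, where $p$ denotes the domain projection. Recall from the set-up that $\Ar(\DD)\to\DD$ is already a cartesian fibration; however, its $p$-cartesian lifts — squares with an equivalence along the bottom — do not in general lie in $\Ar^E(\DD)$, since the composite of an $E$-map with an arbitrary map need not be in $E$. This is exactly where the factorisation will be needed.

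First I would construct a candidate cartesian lift. Given an object $e\colon a\onto b$ of $\Ar^E(\DD)$ and an arrow $u\colon a'\to a$ of $\DD$, factor the composite $e\circ u$ as $a'\stackrel{e'}{\onto}c\stackrel{f}{\rat}b$ with $e'\in E$ and $f\in F$. The resulting commuting square with top edge $u$, left edge $e'$, right edge $e$ and bottom edge $f$ is a morphism $\alpha\colon e'\to e$ in $\Ar^E(\DD)$ lying over $u$, and this is the proposed cartesian lift. I would then check that $\alpha$ is $p$-cartesian: given a test morphism $\delta\colon\tilde e\to e$ in $\Ar^E(\DD)$, with $\tilde e\colon\tilde a\onto\tilde c$, lying over $v\colon\tilde a\to a$, together with a factorisation $v=u\circ w$ in $\DD$, a compatible lift of $w$ through $\alpha$ amounts to a single arrow $h\colon\tilde c\to c$ satisfying $h\circ\tilde e=e'\circ w$ and $f\circ h=(\text{bottom edge of }\delta)$ — that is, a diagonal filler of the square with left edge $\tilde e\in E$ and right edge $f\in F$ (this square commutes because $f\circ e'=e\circ u$ and $\delta$ lives over $v=u\circ w$). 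By $E\bot F$ the space of such fillers is contractible, so $\alpha$ is $p$-cartesian; in particular $p$ has all cartesian lifts and is a cartesian fibration.

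To identify all the cartesian arrows, I would observe that the same filler computation applies verbatim to any morphism of $\Ar^E(\DD)$ whose bottom edge lies in $F$: the argument used only that the source of the test morphism lies in $E$, which is automatic in $\Ar^E(\DD)$, and that the bottom edge of the morphism being tested lies in $F$. So every such square is $p$-cartesian. Conversely, by uniqueness of cartesian lifts, any $p$-cartesian $\beta\colon e_1\to e$ over $u$ is equivalent over $\id$ to the canonical lift $\alpha$ above; an equivalence over $\id$ is a square with an equivalence along the bottom, so the bottom edge of $\beta$ is the composite of $f\in F$ with an equivalence, and is therefore again in $F$ (equivalences lie in $F$, and $F$ is closed under composition). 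This yields the displayed form of the cartesian arrows.

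I expect the only real obstacle to be $\infty$-categorical bookkeeping: upgrading the diagram-chasing above to statements about contractibility of mapping spaces and about composites of higher simplices, rather than equalities of $1$-cells. This can be kept under control, following Lurie~\cite[\S5.2.8]{Lurie:HTT}, by noting that fibrewise the inclusion $\Ar^E(\DD)\subset\Ar(\DD)$ is that of a reflective subcategory — the reflection being the $E$-factor of the $(E,F)$-factorisation — and that stability of the factorisation system makes these reflections compatible with the pushforward functors of the cartesian fibration $\Ar(\DD)\to\DD$; the statement then follows from the general behaviour of cartesian fibrations under fibrewise reflective localisation.
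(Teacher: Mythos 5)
Your proposal is correct and follows essentially the same route as the paper: the paper's proof likewise reduces the cartesian-lifting problem in $\Ar^E(\DD)$ to finding a unique diagonal filler for a square with an $E$-map on the left and an $F$-map on the right, supplied by orthogonality $E\bot F$ (the construction of the lift by $(E,F)$-factoring the composite, and the converse identification of all cartesian arrows via uniqueness of cartesian lifts, are left more implicit there). One cosmetic point: in your closing paragraph the fibrewise inclusion $\DD^E_{x/}\subset\DD_{x/}$ is coreflective rather than reflective --- the $E$-factor is a \emph{right} adjoint $w$ to the inclusion, cf.\ Lemma~\ref{coreflection:w} --- but this does not affect your main, self-contained argument.
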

\begin{proof}
  The essence of the argument is to
  provide uniquely the dashed arrow in
  $$\xymatrix{A \ar[rrd]\ar[rd]\ar@{->>}[d] &&\\
  S \ar[rrd]|\hole\ar@{-->}[rd] & \cdot \ar[r]\ar@{->>}[d] & \cdot \ar@{->>}[d] \\
  & X \ar@{ >->}[r] & Y
  }$$
  which amounts to filling
  $$\xymatrix{ A \ar[r]\ar@{->>}[d] & X \ar@{ >->}[d] \\
  S \ar@{-->}[ru] \ar[r] & Y ,}$$
  in turn uniquely fillable by  orthogonality $E \bot F$.
\end{proof}

\begin{lemma}\label{coreflection:w}
  The inclusion $\Ar^E(\DD)\to \Ar(\DD)$ admits a right adjoint $w$.
  This right adjoint $w :\Ar(\DD) \to \Ar^E(\DD)$ sends an arrow $a$ to its $E$-factor.
  In other words, if $a$ factors as $a= f\circ e$ then $w(a)=e$.  
\end{lemma}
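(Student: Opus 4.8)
The plan is to exhibit $w$ as the right adjoint to the inclusion $\iota\colon\Ar^E(\DD)\to\Ar(\DD)$ by constructing an explicit counit and verifying the universal property, using the preceding Lemma~\ref{ArED} to supply cartesian arrows. First I would note that by the third axiom for the factorisation system $(E,F)$, every object $a$ of $\Ar(\DD)$, say $a\colon X\to Y$ drawn vertically, admits a factorisation $a = f\circ e$ with $e\colon X\onto Z$ in $E$ and $f\colon Z\rat Y$ in $F$. I set $w(a) := e$, regarded as an object of $\Ar^E(\DD)$, and I define the component $\epsilon_a\colon \iota w(a)\to a$ of the putative counit to be the square
$$\xymatrix{
   X \ar[r]^{\id} \ar@{->>}[d]_e & X \ar[d]^a \\
   Z \ar@{ >->}[r]_f & Y
}$$
i.e.\ the identity on domains and $f$ on codomains. (That the orthogonality argument in the parenthetical remark of the Factorisation systems paragraph shows the factorisation is essentially unique guarantees $w(a)$ is well-defined up to contractible choice; in the $\infty$-categorical setting I would phrase this by saying the space of factorisations is contractible, so $w$ is a well-defined functor — this is already implicit in Lurie's treatment cited as \S~5.2.8 of \cite{Lurie:HTT}.)

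Next I would verify the universal property: for any object $b\colon A\onto B$ of $\Ar^E(\DD)$ and any morphism $\phi\colon \iota(b)\to a$ in $\Ar(\DD)$, there is a contractible space of morphisms $\psi\colon b\to w(a)$ in $\Ar^E(\DD)$ with $\epsilon_a\circ\iota(\psi) = \phi$. Unpacking, $\phi$ is a commuting square with top edge some $u\colon A\to X$ and bottom edge some $v\colon B\to Y$, and the required $\psi$ is a square with top edge $u$ (forced, since $\epsilon_a$ is the identity on domains) and bottom edge a map $\bar v\colon B\to Z$ making both the square $b\to w(a)$ commute and the triangle $f\circ\bar v = v$ hold. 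But $B\to Z$ lying over $v$ with $\bar v\circ b = e\circ u$ is exactly a diagonal filler in
$$\xymatrix{
   A \ar[r]^{e u}\ar@{->>}[d]_b & Z \ar@{ >->}[d]^f \\
   B \ar@{-->}[ru]^{\bar v}\ar[r]_v & Y
}$$
and the space of such fillers is contractible by orthogonality $E\bot F$, since $b\in E$ and $f\in F$. This establishes the adjunction $\iota\dashv w$, and the formula $w(a)=e$ is precisely the statement to be proved.

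The main obstacle is not any single step but the need to make the construction genuinely functorial and the universal property genuinely $\infty$-categorical rather than merely $1$-categorical: one must check that the contractibility of filler spaces assembles coherently, i.e.\ that $\epsilon$ is a natural transformation and that the mapping-space equivalence $\Map_{\Ar^E(\DD)}(b,w(a))\simeq\Map_{\Ar(\DD)}(\iota b, a)$ holds as an equivalence of spaces, not just a bijection on $\pi_0$. I would handle this by invoking Lemma~\ref{ArED}: the cartesian arrows there are exactly the squares with an $E$-map on top and an $F$-map on the bottom right, and the counit square $\epsilon_a$ above is of that shape, so it is a cartesian arrow for the domain fibration $\Ar^E(\DD)\to\DD$ sitting over $\id_X$; combined with the fact that the domain projection $\Ar(\DD)\to\DD$ is also a cartesian fibration with the same domain, the adjunction follows from the standard criterion that a fully faithful inclusion of the subcategory of "$E$-valued" arrows admits a right adjoint whenever each object maps cartesianly into it — which is the content of the explicit filler computation above. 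Alternatively, one can cite Lurie~\cite[\S~5.2.8]{Lurie:HTT} directly, where such coreflections associated to factorisation systems are set up in exactly this generality; I would make the citation and then record the explicit formula $w(a)=e$ as the added content.
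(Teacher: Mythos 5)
Your proposal is correct, but it takes a genuinely different (more self-contained) route than the paper. The paper's entire proof is the one-line citation ``dual to \cite[5.2.8.19]{Lurie:HTT}'', i.e.\ it outsources the statement to Lurie's general result that for a factorisation system the full subcategory of arrows in one class is (co)reflective in $\Ar(\DD)$. You instead unwind that argument: you take $w(a)=e$, exhibit the candidate counit $\epsilon_a$ (identity on domains, the $F$-factor $f$ on codomains), and identify the homotopy fibre of postcomposition $\Map_{\Ar(\DD)}(b,e)\to\Map_{\Ar(\DD)}(b,a)$ over a square $\phi=(u,v)$ with the space of diagonal fillers of the square having $b\in E$ on the left and $f\in F$ on the right, contractible by orthogonality. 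That gives the objectwise mapping-space equivalence, and the adjoint functor then exists by the standard criterion that objectwise universal (counit) arrows assemble into a right adjoint --- which is in effect what Lurie's 5.2.8.18--19 packages. Your version buys an explicit formula and makes the role of orthogonality visible; the paper's buys brevity by delegating exactly the functoriality/coherence bookkeeping that you rightly flag as the real work, and which you also cover by offering the direct citation as an alternative.

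One side remark in your coherence discussion does not work as stated: $\epsilon_a$ is a morphism of $\Ar(\DD)$ from an object of $\Ar^E(\DD)$ to an arbitrary arrow $a$, so it is not an arrow of $\Ar^E(\DD)$ at all, and Lemma~\ref{ArED} (which describes cartesian arrows \emph{of} $\Ar^E(\DD)$) does not apply to it; nor is $\epsilon_a$ cartesian for the domain projection $\Ar(\DD)\to\DD$, whose cartesian arrows have an equivalence on codomains (\ref{setup}). This is harmless for your argument, since the adjunction is carried entirely by the filler computation together with the universal-arrow criterion (or by citing \cite[\S 5.2.8]{Lurie:HTT} outright, as the paper does), but the appeal to Lemma~\ref{ArED} at that point should be dropped.
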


\begin{proof}
  This is dual to \cite[5.2.8.19]{Lurie:HTT}.
\end{proof}

\begin{lemma}
  The right adjoint $w$ sends cartesian arrows 
in $\Ar(\DD)$ 
to cartesian arrows
in $\Ar^E(\DD)$.
\end{lemma}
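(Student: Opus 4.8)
The plan is to verify directly that $w$ preserves the cartesian lifts described in Lemmas~\ref{ArED} and \ref{coreflection:w}. Recall that a cartesian arrow in $\Ar(\DD)$ over a morphism $S \to X$ in $\DD$ is a square whose bottom edge is an equivalence; applying $w$ should give a square in $\Ar^E(\DD)$ of the form displayed in Lemma~\ref{ArED}, i.e.\ with bottom edge in $F$ and, being cartesian there, with bottom edge moreover an equivalence (since a cartesian arrow in $\Ar^E(\DD)$ lying over an equivalence must itself be an equivalence in the relevant sense). So the concrete task is: given a cartesian square in $\Ar(\DD)$
$$\xymatrix{
  A \ar[r] \ar[d]_a & B \ar[d]^b \\
  \cdot \ar[r]^\sim & \cdot
}$$
apply the $(E,F)$-factorisations $a = f \circ e$ and $b = g \circ e'$ and check that the induced square on the $E$-factors $e, e'$ is again cartesian in $\Ar^E(\DD)$, that is, that its bottom edge (the induced map between the two intermediate objects) lies in $F$ and is an equivalence.

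**First I would** set up the diagram of factorisations: the functoriality of $w$ gives a commuting cube, with the front face the original cartesian square in $\Ar(\DD)$, the back face the image square with vertical arrows $e, e'$, and the connecting edges the $F$-factors $f, g$ together with the identity-ish maps on the bottom. Concretely, writing the original square with top arrow $A \to B$ and bottom equivalence $\sigma : S \isopil X$, the factorisations produce intermediate objects $M$ (for $a$) and $N$ (for $b$), and I need the induced comparison $M \to N$. **Then I would** observe that $M \to N$ fits into a square with $S \isopil X$ on one side and the $F$-maps $M \rat S$, $N \rat X$ as the verticals, and use that $F$ is closed under the relevant base change / the uniqueness in the orthogonality $E \bot F$ to identify $M \to N$ up to equivalence: since $\sigma$ is an equivalence, pulling the $F$-map $N \rat X$ back along $\sigma$ gives an $F$-map over $S$ through which $A \to M$ (which lies in $E$, being $e$) factors, and by the essential uniqueness of $(E,F)$-factorisations this pullback is equivalent to $M \rat S$; hence $M \to N$ is an equivalence. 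Equivalently and perhaps more cleanly: $w$ is a right adjoint (Lemma~\ref{coreflection:w}), so it preserves the terminal-object-like universal property, and a cartesian arrow over an equivalence is just an equivalence, which any functor preserves — so the real content is only that $w$ takes cartesian arrows over \emph{arbitrary} base morphisms to cartesian ones, and this reduces via the explicit descriptions to the orthogonality lemma.

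**The main obstacle** I anticipate is not any single hard step but rather getting the bookkeeping of the commuting cube and its factorisations exactly right in the $\infty$-categorical setting, where "induced map between factorisations" must be produced by a genuine universal property rather than a naive diagram chase; the clean way to avoid grinding through this is to phrase the argument entirely in terms of the adjunction of Lemma~\ref{coreflection:w} together with the characterisation of cartesian arrows, invoking that a right adjoint between cartesian fibrations over the same base that commutes with the projections (which $w$ does, since both projections are the domain projection and $w$ fixes the domain) automatically preserves cartesian arrows. I would therefore structure the proof as: (i) recall that both domain projections are the same functor $\Ar(\DD) \to \DD$, $\Ar^E(\DD)\to\DD$, and $w$ commutes with them; (ii) recall the explicit description of cartesian arrows in each (Set-up~\ref{setup} and Lemma~\ref{ArED}); (iii) check that $w$ sends a square with equivalence as bottom edge to a square with equivalence as bottom edge, which follows since $w$ applied to such a square has bottom edge the $w$-image of an equivalence, and $w$, being a functor, preserves equivalences. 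Step (iii) is essentially immediate once one notes the bottom edge of $w$(square) is the $F$-factor comparison over an equivalence, forced to be an equivalence by the $2$-out-of-$3$-type uniqueness built into orthogonal factorisation systems — which is precisely the computation carried out in the parenthetical remark following the definition of factorisation systems, reused here.
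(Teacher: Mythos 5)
Your proof sets out to establish something false, and that is where it breaks. By Lemma~\ref{ArED}, a cartesian arrow of $\Ar^E(\DD)$ is a square with vertical edges in $E$ and bottom edge in $F$ --- \emph{not} a square with invertible bottom edge. So the only thing to check is that the induced comparison $M\to N$ between the middle objects of the two factorisations lies in $F$; in general it is not an equivalence. For instance, take $\DD=\Set$ with $E$ the surjections and $F$ the injections, and the cartesian square in $\Ar(\DD)$ from $a\colon A\into Y$ to $b\colon B\into Y$ (two subobjects with $A\subsetneq B$), top edge the inclusion, bottom edge $\id_Y$: the $E$-factors are $\id_A$ and $\id_B$, and the bottom edge of the image square is the proper inclusion $A\into B$, an $F$-map which is not invertible, yet the image square is cartesian in $\Ar^E(\DD)$ exactly as the lemma asserts. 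Your argument for invertibility fails at the appeal to uniqueness of $(E,F)$-factorisations: the map from $A$ to (the pullback of) $N$ that you produce is the composite of the arbitrary top edge $A\to B$ with $B\onto N$, and there is no reason for it to lie in $E$, so it does not give a second $(E,F)$-factorisation of $a$ to compare with $A\onto M\rat S$. The same error recurs in your step (iii): the bottom edge of the $w$-image square is not ``$w$ applied to an equivalence'' (it is the induced map between the codomains of the $E$-factors, a component of $w$ of the \emph{morphism}, not $w$ of an object), and it need not be invertible. Underlying several of these slips is a misidentification of the base: for the domain projection a square lies over its \emph{top} edge, so cartesian arrows of $\Ar(\DD)$ or $\Ar^E(\DD)$ do not lie over equivalences, and the remark that ``a cartesian arrow over an equivalence is an equivalence'' has no purchase here; the sentence reducing everything to ``$w$ takes cartesian arrows over arbitrary base morphisms to cartesian ones'' merely restates the lemma.

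The repair is short and is the paper's actual argument: factor both vertical arrows; the composite of $M\to N$ with the $F$-map $N\rat X$ agrees with the $F$-map $M\rat S$ followed by the equivalence $S\isopil X$, hence lies in $F$, and since right classes are closed under composition and under right cancellation (if $g\circ h$ and $g$ are in $F$ then so is $h$), the map $M\to N$ lies in $F$; now Lemma~\ref{ArED} identifies the image square as cartesian. Your alternative ``clean'' route --- that a right adjoint over $\DD$ automatically preserves cartesian arrows --- can indeed be made to work, but it needs more than ``commutes with the projections'': one must use that the unit and counit of the coreflection are vertical (an adjunction over $\DD$), and then supply a proof or a reference (this is the kind of statement the paper invokes from Lurie in Theorem~\ref{fact-theorem}); once that principle is invoked there is nothing left for your step (iii) to do, and as written that step is in any case incorrect.
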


\begin{proof}
  This can be seen from the factorisation:
$$
\vcenter{\xymatrix{
  \cdot \ar[r] \ar[d] & \cdot \ar[d] \\
  \cdot 
  \ar[r]^\sim & \cdot}}\qquad\mapsto\qquad
\vcenter{
\xymatrix{  \cdot \ar@{->>}[d] \ar[r] & \cdot \ar@{->>}[d] \\
  \cdot \ar[r] \ar@{ >->}[d] & \cdot \ar@{ >->}[d] \\
  \cdot 
  \ar[r]^\sim & \cdot}
}
$$
The middle horizontal arrow is forced into $F$ by the closure properties of 
right classes.
\end{proof}

Let $\Fun'(\Lambda^1_2, \DD) = \Ar^E(\DD) \times_{\DD} \Ar^F(\DD)$ denote the 
$\infty$-category whose objects are pairs of composable arrows where the first
arrow is in $E$ and the second in $F$.  Let $\Fun'(\Delta[2],\DD)$ denote the
$\infty$-category of $2$-simplices in $\DD$ for which the two `short' edges are in
$E$ and $F$ respectively.  The projection map 
$\Fun'(\Delta[2],\DD) \to \Fun'(\Lambda^1_2,\DD)$ is always a trivial Kan 
fibration, just because $\DD$ is an $\infty$-category.
\begin{prop}\label{HTT:5.2.8.17}
	{\rm (\cite[5.2.8.17]{Lurie:HTT}.)}
   The projection $\Fun'(\Delta[2],\DD) \to 
  \Fun(\Delta[1],\DD)$ induced by the long edge $d^1 : [1] \to [2]$
  is a trivial Kan fibration.
\end{prop}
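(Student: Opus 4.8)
The plan is to deduce Proposition~\ref{HTT:5.2.8.17} from the analysis already carried out, rather than reproving it from scratch; indeed the parenthetical citation to \cite[5.2.8.17]{Lurie:HTT} signals that the statement is available, but since the ambient setting here is model-independent $\infty$-categories, I would want to see it assembled from the lemmas just proved. First I would factor the projection $\Fun'(\Delta[2],\DD) \to \Fun(\Delta[1],\DD)$ through $\Fun'(\Lambda^1_2,\DD)$: we already observed that $\Fun'(\Delta[2],\DD) \to \Fun'(\Lambda^1_2,\DD)$ is a trivial Kan fibration (restriction along the inner-horn inclusion $\Lambda^1_2 \into \Delta[2]$, valid because $\DD$ is an $\infty$-category). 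So it suffices to show that the remaining map $\Fun'(\Lambda^1_2,\DD) = \Ar^E(\DD)\times_\DD \Ar^F(\DD) \to \Fun(\Delta[1],\DD)$, sending a composable pair $(e,f)$ with $e\in E$, $f\in F$ to the composite $f\circ e$, is a trivial Kan fibration, i.e.\ an equivalence of $\infty$-categories (a map that is both an equivalence and an inner fibration).

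For the "equivalence" part, the content is exactly the existence and essential uniqueness of $(E,F)$-factorisations. Concretely, I would exhibit a quasi-inverse using Lemma~\ref{coreflection:w}: the right adjoint $w : \Ar(\DD) \to \Ar^E(\DD)$ sends an arrow $a$ to its $E$-factor $e$, and dually (applying the same lemma to $F$, or rather its analogue for the codomain projection) there is the $F$-factor assignment $a \mapsto f$; these organise into a functor $\Fun(\Delta[1],\DD) \to \Fun'(\Lambda^1_2,\DD)$. The composite one way is the identity on $\Fun(\Delta[1],\DD)$ since $f\circ e = a$; the composite the other way is equivalent to the identity on $\Ar^E(\DD)\times_\DD\Ar^F(\DD)$ by orthogonality $E\bot F$, which forces the comparison square between a given factorisation and the chosen one to have a contractible space of fillers — this is the same diagonal-filler argument used in the proof of Lemma~\ref{ArED}. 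One must check naturality and that these are genuine inverse equivalences rather than merely pointwise inverses, but this is bookkeeping with the adjunctions of Lemma~\ref{coreflection:w}.

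For the "inner fibration" part, one notes that $\Ar^E(\DD)$, $\Ar^F(\DD)$ and $\Fun(\Delta[1],\DD)$ are all $\infty$-categories (full subcategories of functor categories into $\DD$), and the pullback $\Ar^E(\DD)\times_\DD\Ar^F(\DD)$ is again an $\infty$-category; a functor between $\infty$-categories that is essentially surjective and fully faithful is automatically a categorical equivalence, hence a trivial fibration in the Joyal model structure, which gives the lifting against all boundary inclusions $\partial\Delta[n]\into\Delta[n]$. So the whole statement reduces to: (i) the inner-horn restriction is a trivial Kan fibration, and (ii) the long-edge evaluation $\Fun'(\Lambda^1_2,\DD)\to\Fun(\Delta[1],\DD)$ is an equivalence of $\infty$-categories. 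I expect the main obstacle to be the second composite in step (ii) — verifying coherently (not just on objects) that $w$ followed by "re-compose" is equivalent to the identity on $\Fun'(\Lambda^1_2,\DD)$; morally it is immediate from orthogonality, but making the homotopy natural, rather than invoking a strict model as Lurie does, requires threading the universal property of $w$ from Lemma~\ref{coreflection:w} through the pullback carefully.
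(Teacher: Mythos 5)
There is a genuine problem here, on two levels. First, the paper does not prove this proposition at all: it is quoted verbatim from Lurie, and its only role is in Corollary~\ref{cor:ArD=EF}, where a section of this trivial fibration is chosen. Your plan to assemble it instead from Lemma~\ref{coreflection:w} is circular: that lemma is itself only cited (as the dual of \cite[5.2.8.19]{Lurie:HTT}), and in Lurie the (co)reflection of 5.2.8.18--5.2.8.19 is constructed precisely by choosing a section of the trivial fibration of 5.2.8.17 (combined with 5.2.7.8) --- compare the proof of Theorem~\ref{fact-theorem}, which runs the same way. Relatedly, the step you dismiss as bookkeeping --- assembling the $E$-factor and $F$-factor assignments into a single functor $\Fun(\Delta[1],\DD)\to\Ar^E(\DD)\times_{\DD}\Ar^F(\DD)$ that is natural, has matching middle objects, and is inverse to composition --- is exactly the coherence content of the proposition: such a functorial factorisation is essentially the same datum as a section of the map you are trying to prove is a trivial fibration, so the purported reduction does not reduce anything.

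Second, two strict-level gaps in the argument as written. (i) The long-edge restriction does not factor through the horn restriction on the nose: there is no simplicial map $\Fun'(\Lambda^1_2,\DD)\to\Fun(\Delta[1],\DD)$ given by ``send $(e,f)$ to $f\circ e$''; composition only becomes a map after choosing a section of the horn trivial fibration, and then your triangle commutes only up to homotopy. Since ``trivial Kan fibration'' is a strict lifting property, not invariant under homotopies of maps, it cannot be transported along such a homotopy factorisation; what is homotopy-invariant is being an equivalence, so the correct reduction is to show that the long-edge restriction is a categorical fibration \emph{and} an equivalence. (ii) Your closing inference ``fully faithful and essentially surjective, hence a trivial fibration in the Joyal model structure'' is false: an equivalence is an acyclic fibration only if it is also a categorical (iso)fibration, and being an inner fibration does not suffice --- the inclusion of a point into the nerve of the contractible groupoid on two objects is a fully faithful, essentially surjective inner fibration that is not a trivial Kan fibration. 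For the map at hand the fibration half can be supplied (restriction along a monomorphism into a quasi-category is a categorical fibration, and $\Fun'(\Delta[2],\DD)\subset\Fun(\Delta[2],\DD)$ is a full subcategory closed under equivalences), but it is absent from your argument. If you do not want to take Lurie's statement on faith, the honest route is his: a direct lifting argument against $\partial\Delta[n]\subset\Delta[n]$ using existence of factorisations and orthogonality, not a bootstrap from Lemma~\ref{coreflection:w}.
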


\begin{cor}\label{cor:ArD=EF}
  There is an equivalence of $\infty$-categories
  $$
  \Ar(\DD) \isopil \Ar^E(\DD) \times_{\DD} \Ar^F(\DD)
  $$
  given by $(E,F)$-factoring an arrow.
\end{cor}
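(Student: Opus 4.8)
The plan is to read off the equivalence as a composite of the two coreflection/fibration results established above, rather than constructing it by hand. First I would fix the two projection functors in play: on one side, the long-edge projection $\Fun'(\Delta[2],\DD)\to\Fun(\Delta[1],\DD)=\Ar(\DD)$ of Proposition~\ref{HTT:5.2.8.17}, which is a trivial Kan fibration and hence an equivalence; on the other side, the restriction-to-the-two-short-edges projection $\Fun'(\Delta[2],\DD)\to\Fun'(\Lambda^1_2,\DD)=\Ar^E(\DD)\times_{\DD}\Ar^F(\DD)$, which is a trivial Kan fibration simply because $\DD$ is an $\infty$-category (inner horn $\Lambda^1_2\into\Delta[2]$ filling, with contractible spaces of fillers). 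Both maps being trivial fibrations, they are in particular equivalences of $\infty$-categories, so I can invert the first and compose with the second.

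Concretely, the equivalence $\Ar(\DD)\isopil \Ar^E(\DD)\times_{\DD}\Ar^F(\DD)$ is the composite
$$
\Ar(\DD)\;\xleftarrow{\ \sim\ }\;\Fun'(\Delta[2],\DD)\;\xrightarrow{\ \sim\ }\;\Ar^E(\DD)\times_{\DD}\Ar^F(\DD),
$$
where the left arrow is the long-edge projection and the right arrow is the short-edges projection. To see that this composite is "given by $(E,F)$-factoring an arrow," I would trace through the definitions: a quasi-inverse to the long-edge projection sends an arrow $h$ to a $2$-simplex whose long edge is $h$ and whose two short edges lie in $E$ and $F$ respectively — but such a $2$-simplex is precisely an $(E,F)$-factorisation of $h$, which exists and is essentially unique by axiom~(3) together with orthogonality $E\bot F$ (the space of such factorisations is contractible). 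Composing with the short-edges projection then returns the pair $(e,f)$ with $h=f\circ e$, as claimed. Here I can also appeal to Lemma~\ref{coreflection:w}, which already identifies the $E$-factor functorially as the right adjoint $w$, to make the description of the first component canonical.

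The only real point requiring care — and I expect it to be the main obstacle, though a mild one — is checking that the composite is genuinely the functor "$(E,F)$-factor," i.e.\ that the informal description matches the formal composite of equivalences on the nose (up to coherent equivalence), rather than merely agreeing on objects. This is handled by observing that any section of the long-edge trivial fibration is determined up to contractible choice, and the space of $2$-simplices with short edges in $(E,F)$ over a fixed long edge $h$ is contractible by the factorisation system axioms; so the section is forced, up to equivalence, to be the factorisation, and its compatibility with morphisms is automatic from functoriality of the trivial-fibration sections. No finiteness or decomposition-space input is needed; this is purely a formal consequence of \ref{HTT:5.2.8.17} and the inner-horn filling property of $\DD$.
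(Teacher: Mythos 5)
Your proposal is correct and is essentially the paper's own argument: the paper's proof likewise picks a section of the trivial Kan fibration of Proposition~\ref{HTT:5.2.8.17} (the long-edge projection) and composes it with the projection $\Fun'(\Delta[2],\DD)\to\Fun'(\Lambda^1_2,\DD)=\Ar^E(\DD)\times_{\DD}\Ar^F(\DD)$, which is a trivial fibration since $\DD$ is an $\infty$-category. Your extra verification that the composite is indeed ``$(E,F)$-factoring'' is a harmless elaboration of what the paper leaves implicit.
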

\begin{proof}
  Pick a section  to the map in \ref{HTT:5.2.8.17} and compose with the 
  projection discussed just prior.
\end{proof}

\bigskip

Let $x$ be an object in $\DD$, and denote by $\DD^E_{x/}$ the $\infty$-category
of $E$-arrows out of $x$.  More formally it is given by the pullback 
$$\xymatrix{
  \DD^E_{x/} \drpullback \ar[r] \ar[d] & \Ar^E(\DD) \ar[d]^{\mathrm{dom}} \\
  {*} \ar[r]_{\name{x}} & \DD}$$
\begin{cor}\label{cor:DxF}
  We have a pullback
  $$\xymatrix{
  \DD_{x/} \drpullback \ar[r] \ar[d] & \Ar^F(\DD) \ar[d]^{\mathrm{dom}} \\
  \DD^E_{x/} \ar[r] & \DD}$$
 \end{cor}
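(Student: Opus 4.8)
The plan is to obtain the desired square as a base change of the equivalence in Corollary~\ref{cor:ArD=EF} along the point $\name{x} : * \to \DD$ picking out the object $x$. Concretely, I would start from the equivalence
$$
\Ar(\DD) \isopil \Ar^E(\DD) \times_{\DD} \Ar^F(\DD)
$$
of Corollary~\ref{cor:ArD=EF}, where the right-hand fibre product is formed using the \emph{codomain} projection on the $E$-factor and the \emph{domain} projection on the $F$-factor (this is what ``$(E,F)$-factoring an arrow'' means: the middle object of the factorisation is the shared vertex). Pulling this equivalence back along the domain projection to $\DD$ and then along $\name{x}$, the left-hand side becomes $\DD_{x/}$ by definition of the slice as the fibre of $\mathrm{dom} : \Ar(\DD) \to \DD$ over $x$, exactly as in the pullback square defining $\DD^E_{x/}$ in the excerpt.

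The key steps, in order, are: (1) record that $\DD_{x/}$ is the pullback of $\mathrm{dom}:\Ar(\DD)\to\DD$ along $\name x$, and likewise $\DD^E_{x/}$ is the pullback of $\mathrm{dom}:\Ar^E(\DD)\to\DD$ along $\name x$ (the latter is given in the excerpt); (2) observe that under the equivalence of Corollary~\ref{cor:ArD=EF}, the domain projection $\Ar(\DD)\to\DD$ corresponds to the domain projection on the $E$-factor $\Ar^E(\DD)$, since $(E,F)$-factoring does not move the source; (3) therefore pulling the equivalence back along $\name x$ identifies $\DD_{x/}$ with $\DD^E_{x/} \times_{\DD} \Ar^F(\DD)$, where the map $\DD^E_{x/}\to\DD$ is $e\mapsto \mathrm{cod}(e)$ (the middle object of the factorisation) and $\Ar^F(\DD)\to\DD$ is the domain projection; (4) unwind: an object of $\DD_{x/}$ is an arrow $h$ out of $x$; its image is the pair $(e,f)$ with $e\in E$ out of $x$, $f\in F$, and $\mathrm{cod}(e)=\mathrm{dom}(f)$ — which is precisely an object of the claimed pullback. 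Naturality of the whole construction (everything in sight is a pullback or a canonical equivalence of $\infty$-categories) upgrades this from an equivalence of objects to the asserted pullback square.

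The main obstacle I anticipate is bookkeeping of the two different projections ($\mathrm{dom}$ versus $\mathrm{cod}$) on the $E$- and $F$-factors and making sure the fibre product in Corollary~\ref{cor:ArD=EF} is stated with the correct structure maps, so that after the base change along $\name x$ the surviving map out of $\DD^E_{x/}$ is the codomain map (landing the object at the midpoint of the factorisation) rather than the domain map (which has been trivialised to $x$). Once that is pinned down, the argument is a formal manipulation of pullbacks: stability of equivalences under base change, and the pasting lemma for pullback squares, applied to the composite
$$
\DD_{x/} \longrightarrow \Ar(\DD) \longrightarrow \DD \xleftarrow{\ \name x\ } * .
$$
No genuinely new input beyond Corollary~\ref{cor:ArD=EF} is needed; in particular the orthogonality $E\bot F$ has already done its work in establishing that corollary.
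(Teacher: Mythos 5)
Your proposal is correct and matches the paper's argument: the paper likewise realises the square as a pasting of pullbacks obtained by base-changing the factorisation equivalence of Corollary~\ref{cor:ArD=EF} (read with codomain projection on the $E$-factor and domain projection on the $F$-factor) along $\name{x}$, using that the coreflection $w$ preserves domains so that $\DD_{x/}$ and $\DD^E_{x/}$ appear as the fibres of the respective domain fibrations over $x$. No essential difference in route or inputs.
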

\begin{proof}
  In the diagram
  $$\xymatrix{
    \DD_{x/} \drpullback \ar[r] \ar[d] & \Ar(\DD) \drpullback \ar[r]\ar[d]_w & \Ar^F(\DD) \ar[d]^{\mathrm{dom}} \\
  \DD^E_{x/} \drpullback \ar[r] \ar[d]& \Ar^E(\DD) \ar[r]_{\mathrm{codom}} \ar[d]^{\mathrm{dom}} & \DD\\
  {*} \ar[r]_{\name{x}} & \DD
  }$$
  the right-hand square is a pullback by \ref{cor:ArD=EF};
  the bottom square and the left-hand rectangle are clearly pullbacks, hence the
  top-left square is a pullback, and hence the top rectangle is too.
\end{proof}

\begin{lemma}\label{lem:Dx'Dx}
  Let $e: x\to x'$ be an arrow in the class $E$. Then we have a pullback square
  $$\xymatrix{
  \DD_{x'/} \drpullback \ar[d]_w \ar[r]^{e\uppershriek} & \DD_{x/} \ar[d]_w \\
  \DD^E_{x'/}   \ar[r]_{e\uppershriek} & \DD^E_{x/}
  }$$
  Here $e\uppershriek$ means `precompose with $e$'.
\end{lemma}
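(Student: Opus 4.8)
The plan is to reduce the claim to Corollary~\ref{cor:DxF} by a pullback-pasting argument. I place the square to be proven to the left of the pullback square of Corollary~\ref{cor:DxF} applied to $x$, obtaining the commutative rectangle
$$\xymatrix @C=36pt{
\DD_{x'/} \ar[d]_w \ar[r]^{e\uppershriek} & \DD_{x/} \ar[d]_w \ar[r] & \Ar^F(\DD) \ar[d]^{\mathrm{dom}}\\
\DD^E_{x'/} \ar[r]_{e\uppershriek} & \DD^E_{x/} \ar[r]_-{\mathrm{codom}} & \DD .
}$$
The right-hand square is a pullback by Corollary~\ref{cor:DxF}, so by the pasting law for pullbacks (valid for $\infty$-categories) it suffices to prove that the outer rectangle is a pullback, and for this I will identify the outer rectangle with the pullback square of Corollary~\ref{cor:DxF} applied to $x'$. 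Three things are needed: (a)~the left square commutes, i.e.\ $w\circ e\uppershriek\simeq e\uppershriek\circ w$; (b)~the lower composite $\mathrm{codom}\circ e\uppershriek$ agrees with $\mathrm{codom}\colon\DD^E_{x'/}\to\DD$; and (c)~the upper composite $\DD_{x'/}\xrightarrow{e\uppershriek}\DD_{x/}\to\Ar^F(\DD)$ agrees with the $F$-factor projection $\DD_{x'/}\to\Ar^F(\DD)$ of Corollary~\ref{cor:DxF}.

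For (a) and (b), recall from \ref{setup} and Lemma~\ref{ArED} that $e\uppershriek$ is the fibrewise cartesian transport functor for the domain cartesian fibrations $\Ar(\DD)\to\DD$ and $\Ar^E(\DD)\to\DD$, and that $w$ is a functor over $\DD$ (via $\mathrm{dom}$), since the $E$-factor of $a$ has the same domain as $a$. By the lemma above stating that $w$ sends cartesian arrows to cartesian arrows, $w$ is a morphism of cartesian fibrations over $\DD$, hence commutes with cartesian transport; this is (a). For (b): by Lemma~\ref{ArED} the cartesian lift of $e$ in $\Ar^E(\DD)$ is a square whose lower edge lies in $F$, and since here $e\in E$ that edge is moreover an equivalence (its $F$-factor is trivial, as $E$ is closed under composition), so this lift also lies over an equivalence for the codomain projection, giving $\mathrm{codom}\circ e\uppershriek\simeq\mathrm{codom}$.

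Point (c) is the substantive one: precomposition with an $E$-map leaves the $F$-factor unchanged. On the level of objects this is immediate, since if $a' = f'\circ e'$ is the $(E,F)$-factorisation of $a'\in\DD_{x'/}$, then $a'\circ e = f'\circ(e'\circ e)$ is again an $(E,F)$-factorisation (because $e'\circ e\in E$), so the $F$-factor stays $f'$ while the $E$-factor becomes $e\uppershriek(e')$. The main obstacle is upgrading this to a coherent equivalence of functors. I would do this via Proposition~\ref{HTT:5.2.8.17}: the space of $(E,F)$-factorisations of a map is contractible, precomposition with $e$ acts on factorisation $2$-simplices by precomposing the $[0,1]$-edge with $e$ and fixing the $[1,2]$-edge, and working with the total categories $\Fun'(\Delta[2],\DD)$ rather than individual fibres --- equivalently, using a section as in Corollary~\ref{cor:ArD=EF} --- makes this identification natural, yielding (a) and (c) simultaneously. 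Granting (a), (b) and (c), the outer rectangle is the Corollary~\ref{cor:DxF} square for $x'$, and pasting finishes the proof. (Equivalently, the whole argument may be phrased as: under the equivalences of Corollary~\ref{cor:DxF} the functor $e\uppershriek\colon\DD_{x'/}\to\DD_{x/}$ becomes $e\uppershriek\times_\DD\id_{\Ar^F(\DD)}$, and base-changing a single factor of a pullback along $e\uppershriek\colon\DD^E_{x'/}\to\DD^E_{x/}$ always produces a pullback square.)
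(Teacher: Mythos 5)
Your proposal is correct and follows essentially the same route as the paper's own proof: paste the desired square against the Corollary~\ref{cor:DxF} square for $x$, identify the outer rectangle with the Corollary~\ref{cor:DxF} square for $x'$ using that precomposition with an $E$-map does not change the $F$-factor, and conclude by pullback pasting. The extra coherence discussion in your points (a)--(c) simply makes explicit what the paper asserts in one line.
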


\begin{proof}
  In the diagram 
    $$\xymatrix{
  \DD_{x'/} \ar[d]_w \ar[r]^{e\uppershriek} & \DD_{x/} \drpullback \ar[d]_w  
  \ar[r] & \Ar^F(\DD) \ar[d]^{\mathrm{dom}}\\
  \DD^E_{x'/}   \ar[r]_{e\uppershriek} & \DD^E_{x/} \ar[r]_{\mathrm{codom}} & \DD
  }$$
  the functor $\DD_{x/}\to \Ar^F(\DD)$ is `taking $F$-factor'.
  Note that the horizontal composites are again `taking $F$-factor' and 
  codomain, respectively, since 
  precomposing with an $E$-map does not change the $F$-factor.
  Since both the right-hand  square and the rectangle are pullbacks by 
  \ref{cor:DxF},
  the left-hand square is a pullback too.
\end{proof}

\begin{blanko}{Restriction.}\label{setup-end}
  We shall need a slight variation of these results.  We continue the assumption
that $\DD$ is an $\infty$-category with a factorisation system $(E,F)$.
Given a full subcategory $\A \subset \DD$,  we denote by
$\A\comma \DD$ the `comma $\infty$-category of arrows in $\DD$ with domain in $\A$'.  More precisely
it is defined as the pullback 
$$\xymatrix{
\A\comma \DD \drpullback \ar[d]_{\mathrm{dom}}\ar[r]^{\mathrm{f.f}} & \Ar(\DD) \ar[d]^{\mathrm{dom}} \\
\A \ar[r]_{\mathrm{f.f}} & \DD}$$
The map $\A\comma \DD \to \A$ is a cartesian fibration.
Similarly, let $\Ar^E(\DD)_{|\A}$ denote the comma $\infty$-category of $E$-arrows with domain 
in $\A$, defined as the pullback 
$$\xymatrix{
\Ar^E(\DD)_{|\A} \drpullback \ar[d]_{\mathrm{dom}}\ar[r]^{\mathrm{f.f}} & \Ar^E(\DD) \ar[d]^{\mathrm{dom}} \\
\A \ar[r]_{\mathrm{f.f}} & \DD}$$
Again $\Ar^E(\DD)_{|\A} \to \A$ is a cartesian fibration (where the cartesian arrows
are squares whose top part is in $\A$ and whose bottom horizontal arrow belongs
to the class $E$).  These two fibrations are just the restriction to $\A$ of
the fibrations $\Ar(\DD) \to \DD$ and $\Ar^E(\DD) \to \DD$. Since the coreflection
$\Ar(\DD) \to \Ar^E(\DD)$ is vertical for the domain fibrations, it restricts to
a coreflection $w:\A\comma \DD \to \Ar^E(\DD)_{|\A}$.

Just as in the unrestricted  situation (Corollary~\ref{cor:ArD=EF}),
we have a pullback square
$$
\xymatrix{\A\comma \DD \drpullback \ar[r] \ar[d]_w & \Ar^F(\DD) \ar[d] \\
\Ar^E(\DD)_{|\A} \ar[r] & \DD}
$$
saying that an arrow in $\DD$ factors like before, also if it starts in
an object in $\A$.
Corollary~\ref{cor:DxF} is the same in the restricted situation --- just assume
that $x$ is an object in $\A$.   Lemma~\ref{lem:Dx'Dx} is also the same, just
assume that $e:x'\to x$ is an $E$-arrow between $\A$-objects.
\end{blanko}

\bigskip

The following easy lemma expresses the general idea of extending a 
factorisation system.

\begin{lemma}\label{fact-ext}
  Given an adjunction $\xymatrix{L:\DD \ar@<+3pt>[r] & \ar@<+3pt>[l] \CC:R}$
  and given
  a factorisation system $(E,F)$ on $\DD$ with
  the properties

  --- $RL$ preserves the class $F$;

  --- $R\varepsilon$ belongs to $F$;

  \noindent
  consider the full subcategory $\wtil \DD \subset \CC$ spanned by the image of
  $L$.  
  Then there is an induced factorisation system $(\wtil E, \wtil F)$ on $\wtil
  \DD\subset \CC$ with $\wtil E := L(E)$ (saturated by equivalences), and $\wtil
  F := R^{-1}F \cap \wtil \DD$.
\end{lemma}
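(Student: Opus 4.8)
The plan is to verify the three axioms of a factorisation system for the pair $(\wtil E, \wtil F)$ on $\wtil\DD$, transporting each from the known factorisation system $(E,F)$ on $\DD$ across the adjunction $L \dashv R$, using the two hypotheses ($RL$ preserves $F$; the counit components $R\epsilon$ lie in $F$) at the points where they are needed. Closure under equivalences is immediate from the definitions: $\wtil E$ is defined to be $L(E)$ saturated by equivalences, and $\wtil F = R^{-1}F \cap \wtil\DD$ is a preimage of an equivalence-closed class intersected with a full subcategory, hence equivalence-closed. So the content is in orthogonality and in existence of factorisations.

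First I would establish \emph{existence of $(\wtil E, \wtil F)$-factorisations}. Take a map $h\colon Lx \to Ly$ in $\wtil\DD$. Transpose it to $\bar h\colon x \to RLy$ in $\DD$ and $(E,F)$-factor it there as $x \stackrel{e}\onto z \stackrel{f}\rat RLy$. Applying $L$ and composing with the counit gives $Lx \stackrel{Le}\longrightarrow Lz \stackrel{\epsilon_{Ly}\circ Lf}\longrightarrow Ly$, and by the triangle identity this composite is $h$. Now $Le \in L(E) \subseteq \wtil E$. For the second factor I need $\epsilon_{Ly}\circ Lf \in \wtil F$, i.e. that its image under $R$ lies in $F$: this image is $R\epsilon_{Ly}\circ RLf$, which is a composite of $R\epsilon_{Ly}\in F$ (second hypothesis) and $RLf \in F$ (first hypothesis, since $f\in F$), hence in $F$ because right classes are closed under composition (as noted in the parenthetical discussion after the definition of factorisation system). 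So $h$ factors as required.

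Next, \emph{orthogonality $\wtil E \bot \wtil F$}. Given $e' = Le$ with $e\in E$ and $f'\colon c\to d$ in $\wtil\DD$ with $Rf'\in F$, and a commuting square with $e'$ on the left and $f'$ on the right, I transpose the square across the adjunction: a lifting problem for $(Le, f')$ in $\CC$ is the same, by adjunction, as a lifting problem for $(e, Rf')$ in $\DD$ — here one uses that $c,d$ are in $\wtil\DD$ so the relevant mapping-space fibre sequences are those computed by $R$, together with the triangle identities to match the square data. Since $e\in E$, $Rf'\in F$ and $E\bot F$ in $\DD$, the transposed lifting problem has a contractible space of solutions, hence so does the original. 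This also shows $\wtil E = {}^\bot \wtil F$ within $\wtil\DD$ once combined with the factorisation existence (by the same retract argument reproduced in the excerpt), so $\wtil E$ is exactly the saturation claimed and the two classes behave correctly; in particular the definition of $\wtil E$ by saturation is consistent with $\wtil E = {}^\bot\wtil F$.

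The main obstacle I anticipate is the orthogonality step: making precise the claim that a square in $\CC = \wtil\DD$ against $Le$ and $f'$ transposes to a square in $\DD$ against $e$ and $Rf'$, \emph{at the level of spaces of fillers rather than just of solutions}. This requires knowing that for objects in the image of $L$ the mapping spaces $\Map_{\CC}(L(-), -)$ are computed as $\Map_{\DD}(-, R(-))$ compatibly with composition — which is exactly the adjunction, but one must check that the contractibility of the filler space (not just nonemptiness) is preserved, i.e. that the adjunction induces an equivalence on the relevant pullback of mapping spaces parametrising fillers. This is a formal consequence of the adjunction equivalence $\Map_\CC(Lx,c) \simeq \Map_\DD(x, Rc)$ being natural, so that it identifies the whole commutative-square space and hence the filler space; I would state it as such and not belabour the simplicial bookkeeping. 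The two hypotheses on $RL$ and $R\epsilon$ are used only in the factorisation step, to guarantee the $F$-factor survives; everything else is pure adjunction formalism plus the closure properties of $(E,F)$ already recorded in the excerpt.
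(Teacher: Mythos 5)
Your proposal is correct and follows essentially the same route as the paper's proof: transpose a map $Lx\to Ly$ across the adjunction, $(E,F)$-factor in $\DD$, transpose back via $L$ and the counit, and verify the second factor lies in $\wtil F$ using the two hypotheses together with closure of $F$ under composition, while orthogonality is transported by the adjunction equivalence of mapping (hence filler) spaces. Your extra care about contractibility of the filler space, rather than mere existence of a filler, is exactly what the paper's one-line claim ``$Le\bot\tilde f$ in $\wtil\DD$ if and only if $e\bot R\tilde f$ in $\DD$'' implicitly relies on.
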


\begin{proof}
  It is clear that the classes $\wtil E$ and $\wtil F$ are closed under
  equivalences.  The two classes are orthogonal: given $Le\in \wtil E$ and
  $\tilde f \in \wtil F$ we have $Le \bot \tilde f$ in the full subcategory
  $\wtil\DD \subset\CC$ if and only if $e \bot R\tilde f$ in $\DD$, and the
  latter is true since $R\tilde f \in F$ by definition of $\wtil F$.  Finally,
  every map $g: LA \to X$ in $\wtil\DD$ admits an $(\wtil E, \wtil
  F)$-factorisation: indeed, it is transpose to a map $A \to RX$, which we
  simply $(E,F)$-factor in $\DD$,
  $$
  \xymatrix@R-1em@C+1em{A\ar[rr]\ar[rd]_e&&RX,\\&D\ar[ru]_f}
  $$
  and transpose back the factorisation (i.e.~apply $L$ and postcompose with the counit):
  $g$ is now the composite
  $$
  \xymatrix{ LA \ar[r]^{Le} & LD \ar[r]^{Lf} & LRX \ar[r]^\varepsilon & X ,}
  $$
  where clearly $Le \in \wtil E$, and we also have $\varepsilon \circ Lf \in \wtil F$
  because of the two conditions imposed.
\end{proof}

\begin{blanko}{Remarks.}\label{fact-decomp}
  By general theory (\ref{coreflection:w}),
  having the factorisation system $(\wtil E , \wtil F)$ 
  implies the existence of a right adjoint to the inclusion
  $$
  \Ar^{\wtil E}(\wtil\DD)  \longrightarrow \Ar(\wtil \DD).
  $$
  This right adjoint returns the $\wtil E$-factor of an arrow.

  Inspection of the proof of \ref{fact-ext} shows that we have the 
  same factorisation property 
  for other maps in $\CC$ than those between objects in $\Im L$, namely
  giving up the requirement that the codomain
  should belong to $\Im L$: it is enough that the domain belongs to $\Im L$:
  {\em every map in $\CC$ whose domain belongs to $\Im L$
  factors as a map in $\wtil E$ followed by a map in $\wtil F:=R^{-1}F$, and we
  still have $\wtil E \bot \wtil F$, without restriction on the codomain
  in the right-hand class.}  This result amounts to a coreflection:
\end{blanko}

\begin{theorem}\label{fact-theorem}
  In the situation of Lemma~\ref{fact-ext}, let $\wtil\DD \comma\CC \subset \Ar(\CC)$
  denote the {\em full} subcategory spanned by the maps with domain in $\Im L$.
  The inclusion functor
$$
\Ar^{\wtil E}(\wtil\DD) \into \wtil\DD\comma\CC
$$
has a right adjoint,
given by factoring any map with domain in $\Im L$
and returning the $\wtil E$-factor.
Furthermore, the right adjoint preserves cartesian arrows (for the domain 
projections).
\end{theorem}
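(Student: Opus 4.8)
The plan is to deduce Theorem~\ref{fact-theorem} from the combination of Lemma~\ref{coreflection:w}, the sharpened factorisation statement in Remark~\ref{fact-decomp}, and the fact (proved right after Lemma~\ref{coreflection:w}) that the $E$-factor coreflection preserves cartesian arrows. First I would record the factorisation system available on the larger category: although $(\wtil E,\wtil F)$ as constructed in Lemma~\ref{fact-ext} lives only on $\wtil\DD$, Remark~\ref{fact-decomp} tells us that any map $g$ in $\CC$ whose domain lies in $\Im L$ still admits a factorisation $g = f\circ e$ with $e\in\wtil E$ and $f\in\wtil F := R^{-1}F$, and that the orthogonality $\wtil E\bot\wtil F$ holds with no restriction on the codomain. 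This is precisely the data needed to run the argument of Lemma~\ref{coreflection:w} (which in turn is the dual of \cite[5.2.8.19]{Lurie:HTT}) verbatim on the full subcategory $\wtil\DD\comma\CC$: the functor $\Ar^{\wtil E}(\wtil\DD)\into\wtil\DD\comma\CC$ is fully faithful and essentially surjective onto $\wtil E$-arrows, and one checks that a map $g$ in $\wtil\DD\comma\CC$ receives a universal map from its $\wtil E$-factor, because completing a commuting square against a $\wtil F$-map is exactly an $\wtil E\bot\wtil F$ lifting problem. Hence the inclusion has a right adjoint $w$ sending $g$ to $e$.

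Next I would verify that $w$ preserves cartesian arrows for the domain projections. A cartesian arrow in $\wtil\DD\comma\CC$ over a map $s\colon A'\to A$ in $\Im L$ is a square whose top is $s$ and whose bottom horizontal is an \emph{equivalence} (this is the restriction to the domain-fibration of the description in \ref{setup}); applying $w$ we must show the resulting square of $\wtil E$-factors is again cartesian in $\Ar^{\wtil E}(\wtil\DD)$, i.e.\ that its bottom horizontal lands in $\wtil F$. This is the same closure argument as in the lemma following Lemma~\ref{coreflection:w}: factor the given cartesian square by first taking $\wtil E$-factors of the two vertical arrows and then $\wtil F$-factors, obtaining a stack of three squares; the middle horizontal map is forced into $\wtil F$ by the closure of the right class under composition and cancellation, so the top square exhibits $w(g)$ as cartesian. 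The only point to be careful about is that these closure properties for $\wtil F = R^{-1}F$ follow from the corresponding ones for $F$ on $\DD$, since $R$ preserves the relevant limits and $\wtil F$ is a preimage; this is routine.

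The main obstacle I anticipate is organising the argument so that Lemma~\ref{coreflection:w}, as stated for the unrestricted $\Ar^E(\DD)\to\Ar(\DD)$, genuinely applies in the relative/Kleisli setting of $\wtil\DD\comma\CC$, where the ambient category $\CC$ carries no factorisation system and $\Im L$ is only a full subcategory. The resolution is conceptual rather than computational: Remark~\ref{fact-decomp} isolates exactly the weakened factorisation-and-orthogonality package ($\wtil E$-then-$\wtil F$ factorisation for maps out of $\Im L$, plus $\wtil E\bot\wtil F$ with unrestricted codomain) that the proof of \cite[5.2.8.19]{Lurie:HTT} uses, so one argues that the coreflection exists by the same universal-property verification, now phrased internally to the full subcategory $\wtil\DD\comma\CC\subset\Ar(\CC)$. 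Once that is in place, the cartesian-preservation claim is immediate from the factorisation picture, exactly parallel to the unrestricted case, and the theorem follows.
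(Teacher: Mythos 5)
Your proposal is correct and takes essentially the same route as the paper: the weakened factorisation-plus-orthogonality package of Remark~\ref{fact-decomp} is exactly what the paper's proof feeds into the Lurie-style coreflection argument (it cites the proof of \cite[5.2.8.18]{Lurie:HTT} together with the dual of 5.2.7.8, which is precisely the universal-property verification you sketch via the $\wtil E\bot\wtil F$ lifting problem), and your cartesian-preservation step is the same stacked-factorisation and right-class cancellation argument used in the lemma following Lemma~\ref{coreflection:w}. (Your aside that ``$R$ preserves the relevant limits'' is unnecessary---closure of $R^{-1}F$ under composition, equivalences and left cancellation needs only functoriality of $R$---but this does not affect the argument.)
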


\begin{proof}
  Given that the factorisations exist as explained above, the proof now follows
  the proof of Lemma~5.2.8.18 in Lurie~\cite{Lurie:HTT}, using the dual of his
  Proposition~5.2.7.8.
\end{proof}

\bigskip

The following restricted version of these results will be useful.

\begin{lemma}\label{fact-Kl-E}
  In the situation of Lemma~\ref{fact-ext}, assume
   there is a full subcategory $J:\A \into\DD$ such that
  
  --- All arrows in $\A$ belong to $E$.
  
  --- If an arrow in $\DD$ has its domain in $\A$, then its $E$-factor
  also belongs to $\A$.
    
  \noindent
  Consider the full subcategory $\wtil \A \subset \CC$ spanned by the image of
  $LJ$. 
  Then there is induced a factorisation system $(\wtil E, \wtil F)$ on $\wtil
  \A\subset \CC$ with $\wtil E := LJ(E)$ (saturated by equivalences) and $\wtil
  F := R^{-1}F \cap \wtil \A$.
\end{lemma}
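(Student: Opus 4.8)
The plan is to deduce Lemma~\ref{fact-Kl-E} from the unrestricted Lemma~\ref{fact-ext} together with the ``restriction'' bookkeeping of \ref{setup-end}, rather than redoing the orthogonality and factorisation arguments from scratch. First I would observe that the two hypotheses on $\A$ are precisely the restricted analogue of the two hypotheses on $(E,F)$ needed in Lemma~\ref{fact-ext}: the first says $\A$ consists of $E$-arrows (so $J$ factors through $\Ar^E(\DD)$ in the appropriate sense), and the second says that the coreflection $w:\A\comma\DD\to\Ar^E(\DD)$ of \ref{setup-end} lands in the sub-$\infty$-category of $E$-arrows whose domain \emph{and codomain} lie in $\A$ --- equivalently, that $w$ restricts to a coreflection onto $\Ar(\A)\cap\Ar^E(\DD)$. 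This is the only place the closure hypothesis is used, and it is the hypothesis that makes ``$LJ(E)$'' a sensible class on $\wtil\A$.

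Next I would set up the transpose argument exactly as in the proof of Lemma~\ref{fact-ext}, but restricted to objects of the form $LJA$ with $A\in\A$. Closure of $\wtil E:=LJ(E)$ and $\wtil F:=R^{-1}F\cap\wtil\A$ under equivalences is formal. For orthogonality, given $LJe\in\wtil E$ and $\tilde f\in\wtil F$, the adjunction $L\dashv R$ converts lifting problems against $\tilde f$ in the full subcategory $\wtil\A\subset\CC$ into lifting problems against $R\tilde f\in F$ in $\DD$; since $e$ (being an arrow of $\A$) lies in $E$ and $E\bot F$ in $\DD$, the filler space is contractible, and fullness of $\wtil\A\subset\CC$ transports contractibility back. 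For the factorisation of a map $g:LJA\to X$ in $\wtil\A$: transpose to $\bar g:JA\to RX$ in $\DD$, take its $(E,F)$-factorisation $JA\xrightarrow{e}D\xrightarrow{f}RX$; the second hypothesis on $\A$ guarantees $D$ is (equivalent to an object) in $\A$, so $e$ is an arrow of $\A$, hence $LJe$ makes sense and lies in $\wtil E$; then transpose back, writing $g$ as $LJA\xrightarrow{LJe}LJD\xrightarrow{Lf}LRX\xrightarrow{\epsilon}X$, and check $\epsilon\circ Lf\in\wtil F$ using $RL$ preserves $F$ and $R\epsilon\in F$ exactly as before. The target $X$ need not be in $\Im LJ$ for the factorisation argument, but to land inside the $\infty$-category $\wtil\A$ one restricts to $X\in\wtil\A$; as in \ref{fact-decomp} one gets for free the stronger statement that the $(\wtil E,\wtil F)$-factorisation exists for \emph{any} $g$ out of $\Im LJ$, with $\wtil F:=R^{-1}F$ unrestricted on the right.

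The only real subtlety --- the step I expect to be the main obstacle --- is verifying that the closure hypothesis on $\A$ is genuinely what is needed for $\wtil E$ to be well-defined and closed under composition and equivalence within $\wtil\A$: one must check that $D$ in the factorisation above can be taken in $\A$ \emph{coherently}, i.e.\ that the coreflection $w$ of \ref{setup-end} corestricts to a functor $\A\comma\DD\to\Ar^E(\DD)_{|\A}$ landing in arrows with codomain in $\A$, and that this is compatible with composition of $(E,F)$-factorisations. This is a diagram-chase of the same flavour as Corollary~\ref{cor:DxF} and Lemma~\ref{lem:Dx'Dx}, gluing pullback squares and invoking that precomposition with an $E$-map does not change the $F$-factor. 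Once that is in place, the remaining verifications are literally the verifications in the proof of Lemma~\ref{fact-ext} with ``$\Im L$'' replaced by ``$\Im LJ$'' throughout, so I would present the proof as ``run the proof of Lemma~\ref{fact-ext}, noting that the closure hypothesis on $\A$ keeps all intermediate objects inside $\wtil\A$.''
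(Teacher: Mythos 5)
Your proposal is correct and is essentially the paper's own argument: the paper proves Lemma~\ref{fact-Kl-E} simply by rerunning the proof of Lemma~\ref{fact-ext} on objects $LJA$, with the closure hypothesis on $\A$ guaranteeing that the middle object of the transposed $(E,F)$-factorisation stays in $\A$, exactly as you describe. The ``coherence'' subtlety you flag is not actually needed, since the factorisation-system axioms only ask for equivalence-closure, orthogonality and existence of factorisations (composition-closure then being automatic, as noted after the definition), so your verifications already suffice.
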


\begin{proof}
    The proof is the same as before.
\end{proof}

\medskip

\begin{blanko}{A basic factorisation system.}\label{eq1-cart}
    Suppose $\CC$ is any $\infty$-category with pullbacks, and $\DD$ is an $\infty$-category 
    with a terminal object $\terminal$.  Then evaluation on
    $\terminal$ defines a cartesian fibration
    $$\operatorname{ev}_\terminal :\Fun(\DD,\CC) \to \CC$$ 
    for which the cartesian arrows are precisely the cartesian natural
    transformations.  The vertical arrows are the natural 
    transformations whose component at $\terminal$ is an equivalence.
    Hence the functor $\infty$-category has a factorisation system in which the
    left-hand class is the class of vertical natural transformations, 
    and the right-hand class is the class of cartesian natural 
    transformations:
  $$
  \xymatrix@R-1em@C+1em{X\ar[rr]\ar[rd]_{\mathrm{eq.\, on\; 1}}&&Y\\
  &Y'\ar[ru]_{\mathrm{cartesian}}}
  $$
\end{blanko}

\bigskip

Finally we shall need the following general result (not related to factorisation 
systems):
\begin{lemma}\label{preservespullback}
  Let $\DD$ be any $\infty$-category.  Then the functor
  \begin{eqnarray*}
    F: \DD\op & \longrightarrow & \Grpd  \\
    D & \longmapsto & (\DD_{D/})^{\eq},
  \end{eqnarray*}
  corresponding to the right fibration $\Ar(\DD)^{\mathrm{cart}}\to\DD$,
  preserves pullbacks.
\end{lemma}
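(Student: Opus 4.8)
The plan is to show directly that $F$ carries pushout squares in $\DD$ to pullback squares in $\Grpd$; this is exactly what ``$F$ preserves pullbacks'' means, a pullback square in $\DD\op$ being a pushout square in $\DD$. So let $P = B \sqcup_A C$ be a pushout in $\DD$, with legs $B\to P$, $C\to P$ and structure maps $A\to B$, $A\to C$. The first step is to promote the universal property of this pushout to a statement about coslices: the claim is that there is an equivalence of $\infty$-categories over $\DD$
$$\DD_{P/} \ \isopil\ \DD_{B/} \times_{\DD_{A/}} \DD_{C/}$$
compatible with the functors ``precompose with a leg''. Indeed, the coslice $\DD_{D/}\to\DD$ is a left fibration, classified by $\Map_\DD(D,-)\colon \DD\to\Grpd$; and a cospan $\DD_{B/}\to\DD_{A/}\leftarrow\DD_{C/}$ of left fibrations over $\DD$ has pullback again a left fibration over $\DD$, classified by the objectwise pullback $\Map_\DD(B,-)\times_{\Map_\DD(A,-)}\Map_\DD(C,-)$ (limits of left fibrations over a base being computed objectwise). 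Now the defining universal property of $P=B\sqcup_A C$ is precisely that for every object $X$ of $\DD$ the canonical map
$$\Map_\DD(P,X)\ \longrightarrow\ \Map_\DD(B,X)\times_{\Map_\DD(A,X)}\Map_\DD(C,X)$$
is an equivalence, naturally in $X$. Unstraightening this equivalence of functors $\DD\to\Grpd$ yields the asserted equivalence of left fibrations over $\DD$, and by construction it intertwines the precomposition functors $\DD_{P/}\to\DD_{B/}$, $\DD_{P/}\to\DD_{C/}$, $\DD_{P/}\to\DD_{A/}$ with the three projections out of the pullback.

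The second step is to pass to cores. The functor $(-)^{\eq}\colon \kat{Cat}_\infty\to\Grpd$ is right adjoint to the inclusion $\Grpd\hookrightarrow\kat{Cat}_\infty$ --- a groupoid maps into an $\infty$-category if and only if it maps into its core --- and therefore preserves all limits, in particular pullbacks. Applying it to the equivalence of the first step produces
$$(\DD_{P/})^{\eq}\ \isopil\ (\DD_{B/})^{\eq}\times_{(\DD_{A/})^{\eq}}(\DD_{C/})^{\eq},$$
still compatible with the three precomposition maps. Since $F(D)=(\DD_{D/})^{\eq}$ is the fibre of the right fibration $\Ar(\DD)^{\mathrm{cart}}\to\DD$ over $D$, and the maps $F(P)\to F(B)$, $F(P)\to F(C)$, $F(P)\to F(A)$ induced by the legs are precisely these precompositions, the last display says exactly that $F$ sends the given pushout square to a pullback square in $\Grpd$. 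This proves the lemma.

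The content is slight; the only point requiring care is the bookkeeping in the first step --- checking that the coslice equivalence is compatible with the structure maps, so that it genuinely witnesses the image square as a pullback and not merely as an abstract equivalence of vertices. One may instead run the whole argument through straightening from the outset, replacing each $\infty$-category in sight by its classifying functor valued in $\Grpd$ or $\kat{Cat}_\infty$; then every claim becomes an objectwise assertion about mapping spaces, and the result is immediate from the universal property of pushouts together with the fact that taking cores commutes with pullbacks.
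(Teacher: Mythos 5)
Your argument is correct, but it follows a genuinely different route from the paper's. The paper first observes that $F$ is a homotopy sum of corepresentables, $F \simeq \colim_{X\in\DD^\eq}\Map(-,X)$, applies the universal property of the pushout to each $\Map(-,X)$, and then invokes the distributive law --- the fact that homotopy sums (colimits indexed by an $\infty$-groupoid) commute with pullbacks, a descent statement in $\Grpd$ from the homotopy linear algebra paper --- to pull the colimit out of the pullback of mapping spaces. You instead keep the variable codomain bundled into the coslice: you upgrade the same mapping-space equivalence $\Map(P,-)\simeq\Map(B,-)\times_{\Map(A,-)}\Map(C,-)$ to a pullback of left fibrations $\DD_{P/}\simeq\DD_{B/}\times_{\DD_{A/}}\DD_{C/}$ over $\DD$, and then apply the core functor $(-)^{\eq}$, which preserves pullbacks because it is right adjoint to the inclusion $\Grpd\hookrightarrow\kat{Cat}_\infty$. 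Both proofs have the pushout universal property as their only real input; they differ in the commuting fact used to assemble it over all codomains: universality/descent of groupoid-indexed colimits in the paper, versus, in your version, the statement that pullbacks of left fibrations over $\DD$ are again left fibrations classified by the objectwise pullback (equivalently, that the inclusion $\mathrm{LFib}(\DD)\subset\kat{Cat}_{\infty/\DD}$ preserves pullbacks, e.g.\ since it admits a left adjoint, the free left fibration) together with limit-preservation of the core. The paper's computation is shorter and stays entirely inside $\Grpd$; yours avoids the distributive law at the price of invoking straightening/unstraightening and that one standard fibration fact, which you cite rather than prove --- acceptable, but worth naming a reference for if this were to be included.
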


\begin{proof}
  Observe first that $F \simeq \colim_{X\in \DD^\eq} \Map ( - , X)$,
  a homotopy sum of representables.  
  Given now a pushout in $\DD$,
  $$\xymatrix{ D \drpullback & \ar[l] B \\
\ar[u] A & \ar[l] \ar[u] C}$$
  we compute, using the distributive law:
  \begin{eqnarray*}
     F(A \coprod_C B )& \simeq & \colim_{X\in \DD^\eq} \Map ( A \coprod_C B , X)   \\
 & \simeq & \colim_{X\in \DD^\eq} \big( \Map (A,X) \times_{\Map(C,X)} \Map(B,X) \big)  \\
 & \simeq & \colim_{X\in \DD^\eq}  \Map (A,X) \times_{\colim\Map(C,X)} \colim_{X\in \DD^\eq}\Map(B,X)  \\
 & \simeq & F(A) \times_{F(C)} F(B) .
  \end{eqnarray*}
\end{proof}

\section{Flanked decomposition spaces}
\label{sec:flanked}

\begin{blanko}{Idea.}
  The idea is that `interval' should mean complete decomposition space (equipped)
  with both an initial and a terminal object.  An object $x\in X_0$ is {\em
  initial} if the projection map $X_{x/} \to X$ is a levelwise equivalence.
  Here the {\em coslice} $X_{x/}$ is defined as the pullback of the lower dec
  $\Decbot{X}$ along $\terminal \stackrel{\name{x}} \to X_0$.  Terminal objects are
  defined similarly with slices, i.e.~pullbacks of the upper dec.  It is not
  difficult to see (compare Proposition~\ref{prop:i*flanked=Segal} below) that
  the existence of an initial or a terminal object forces $X$ to be a Segal
  space.
  
  While this intuition may be helpful,
  it turns out to be practical
  to approach the notion of interval from a more abstract viewpoint, which will
  allow us to get hold of various adjunctions and factorisation systems that are
  useful to prove things about intervals.  We come to intervals in the next 
  section.  First we have to deal with flanked decomposition spaces.
\end{blanko}

\begin{blanko}{The category $\Xi$ of finite strict intervals.}
  We denote by $\Xi$ the category of finite strict intervals
  (cf.~\cite{Joyal:disks}), that is, a skeleton of the category whose objects
  are nonempty finite linear orders with a bottom and a top element, required to
  be distinct, and whose arrows are the maps that preserve both the order and
  the bottom and top elements.  We imagine the objects as columns of dots, with
  the bottom and top dot white, then the maps are the order-preserving maps that
  send white dots to white dots, but are allowed to send black dots to white
  dots.

  There is a forgetful functor $u:\Xi\to\simplexcategory$ which forgets that there is anything
  special about the white dots, and just makes them black.  This functor has a left
  adjoint $i:\simplexcategory\to\Xi$ which to a linear order (column of black dots) adjoins a
  bottom and a top element (white dots).

  Our indexing convention for $\Xi$ follows
  the free functor $i$: the object in $\Xi$ with $k$ black dots (and two outer white
  dots) is denoted $[k-1]$.  Hence the objects in $\Xi$ are $[-1]$, $[0]$, $[1]$, etc.
  Note that $[-1]$ is an initial object in $\Xi$.
  The two functors can therefore be described on objects as $u([k])=[k+2]$
  and $i([k])=[k]$, and the adjunction is given by the following 
  isomorphism:
  \begin{equation}\label{eq:XiDelta}
    \Xi([n],[k]) = \simplexcategory([n],[k\!+\!2]) \qquad n\geq 0, k\geq -1 .
  \end{equation}
\end{blanko}

\begin{blanko}{New outer degeneracy maps.}
  Compared to $\simplexcategory$ via the inclusion $i:\simplexcategory\to \Xi$, the category $\Xi$ 
  has one extra 
  coface map in $\Xi$,
  namely $[-1]\to [0]$.  It also has, in each degree, 
  two extra {\em outer codegeneracy 
  maps}:  $s^{\bot-1}: [n]\to [n-1]$ sends the bottom black dot to the bottom white 
  dot, and $s^{\top+1} : [n]\to [n-1]$ sends the top black dot to the top white 
  dot.  (Both maps are otherwise bijective.)  
\end{blanko}

\begin{blanko}{Basic adjunction.}
  The adjunction $i \isleftadjointto u$ induces an adjunction $i\upperstar
  \isleftadjointto u\upperstar$
$$\xymatrix{
\Fun(\Xi\op,\Grpd) \ar@<+3pt>[r]^-{i\upperstar} & \ar@<+3pt>[l]^-{u\upperstar} 
\Fun(\simplexcategory\op,\Grpd)
}$$
which will play a central role in all the constructions in this section.

The functor $i\upperstar$ takes {\em 
underlying simplicial space}:
concretely, applied to a $\Xi\op$-space $A$, the functor $i\upperstar $ deletes $A_{-1}$
and removes all the extra outer degeneracy maps.

On the other hand, the functor $u\upperstar $,
applied to a simplicial space $X$, 
deletes $X_0$ and removes all outer face maps (and then reindexes).

The comonad 
$$
i\upperstar u\upperstar : \Fun(\simplexcategory\op,\Grpd) \to \Fun(\simplexcategory\op,\Grpd)
$$
is precisely the double-dec construction $\Decbot{\Dectop{}}$, and the
counit of the adjunction is precisely the
  double-dec map
$$
\varepsilon_X=d_\top d_\bot:i\upperstar u\upperstar X=\Decbot{\Dectop{X}}
\longrightarrow X .
$$

On the other hand, the monad
$$
u\upperstar i\upperstar : \Fun(\Xi\op,\Grpd) \to \Fun(\Xi\op,\Grpd)
$$
is also a kind of double-dec, removing first the extra outer degeneracy
maps, and then the outer face maps.  The unit 
$$\eta_A = s_{\bot-1} s_{\top+1}: A \to 
u\upperstar i\upperstar A$$ 
will also play an important role.
\end{blanko}

\begin{lemma}\label{u*f(cULF)=cart}
    If $f: Y \to X$ is a \culf map of simplicial spaces,
  then  $u\upperstar f :u\upperstar Y \to u\upperstar X $ is cartesian.
\end{lemma}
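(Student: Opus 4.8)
The plan is to reduce the statement to the characterisation of cULF maps in terms of generic (inner and degeneracy) face maps, and then check cartesianness of $u\upperstar f$ square by square on the generating arrows of $\Delta$, using the explicit description of $u\upperstar$ as "delete $X_0$, forget outer face maps, reindex". Recall that $u\upperstar X$ in degree $n$ is $X_{n+2}$, and that a face or degeneracy map of $\Delta$ acting on $u\upperstar X$ corresponds, under the shift $[n]\mapsto u([n])=[n+2]$, to an \emph{inner} face or degeneracy map of $X$ (the two new outer ones having been discarded by $u$). So every generic map of $\Delta$, viewed in $u\upperstar X$, is the restriction of a generic — indeed inner — map of $X$; and cartesianness on \emph{all} maps, not just generic ones, will follow once we have it on the face maps $d_i$ (since $u\upperstar f$ is a simplicial map between simplicial spaces, and a simplicial map is cartesian iff it is cartesian on all face maps — or one invokes that being cartesian is detected on a generating set together with the already-cartesian degeneracies).

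First I would spell out, for $\alpha:[m]\to[n]$ in $\Delta$, the commuting square
$$\xymatrix{
(u\upperstar Y)_n \ar[r]^{(u\upperstar f)_n}\ar[d]_{\alpha^*} & (u\upperstar X)_n \ar[d]^{\alpha^*} \\
(u\upperstar Y)_m \ar[r]_{(u\upperstar f)_m} & (u\upperstar X)_m
}$$
and identify it, via the isomorphisms $(u\upperstar X)_k\cong X_{k+2}$, with the square
$$\xymatrix{
Y_{n+2} \ar[r]^{f_{n+2}}\ar[d]_{(u\alpha)^*} & X_{n+2} \ar[d]^{(u\alpha)^*} \\
Y_{m+2} \ar[r]_{f_{m+2}} & X_{m+2}
}$$
where $u\alpha:[m+2]\to[n+2]$ is the evident extension of $\alpha$ fixing the two new endpoints. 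It suffices to treat $\alpha$ a coface or codegeneracy map; in each case $u\alpha$ is an \emph{inner} coface or a codegeneracy of $\Delta$. Then: since $f$ is ULF it is cartesian on inner face maps, so the square is a pullback when $\alpha$ is a coface map; since $f$ is conservative it is cartesian on degeneracy maps, so the square is a pullback when $\alpha$ is a codegeneracy. (If one prefers, one notes more economically that $f$ cULF means cartesian on \emph{all} generic maps of $\Delta$, and $u$ sends every map of $\Xi$ of the relevant form to a generic map of $\Delta$, because $u\alpha$ preserves endpoints whenever $\alpha$ does, and here even the outer cofaces of $\Delta$ become inner cofaces after applying $u$.)

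Finally I would package this: a simplicial map all of whose face-map squares are pullbacks and all of whose degeneracy-map squares are pullbacks has all of its naturality squares pullbacks (compose pullbacks along a factorisation of $\alpha$ into cofaces and codegeneracies, using the pasting lemma). Hence $u\upperstar f$ is cartesian. The only point requiring a little care — the "main obstacle", though it is mild — is the bookkeeping of the reindexing: one must be sure that the \emph{outer} coface and codegeneracy maps of $\Delta$, which a priori are the ones on which a cULF map need not be cartesian, do not actually arise here, and indeed they do not, precisely because $u\upperstar$ has already thrown away degree $0$ (so $d_0$ and $d_{\text{top}}$ of $u\upperstar X$ in degree $n$ are $d_1$ and $d_{n+1}$ of $X$ in degree $n+2$, both inner), and thrown away the two extra outer degeneracies of $\Xi$. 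Once that is observed, every square in sight is one on which $f$ is assumed cartesian, and the result follows.
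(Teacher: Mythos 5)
Your strategy is the intended one (and is essentially the paper's one-line argument), but as written there is a genuine gap coming from a mix-up about what $u\upperstar$ does. The lemma asserts that $u\upperstar f$ is cartesian as a natural transformation of $\Xi\op$-diagrams, so you must check the naturality squares for \emph{all} arrows of $\Xi$, not only for the cofaces and codegeneracies of $\Delta$ transported along $[n]\mapsto[n+2]$. Your verification covers only the latter, and you justify ignoring the rest by saying that $u\upperstar$ has ``thrown away the two extra outer degeneracies of $\Xi$''. That confuses $u\upperstar$ with $i\upperstar$: it is $i\upperstar$ that deletes the degree $-1$ part and the extra outer degeneracies, whereas $u\upperstar$ discards only the outer \emph{face} maps. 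The $\Xi\op$-space $u\upperstar X$ retains a $[-1]$-component (namely $X_1$), its extra outer degeneracy operators $s_{\bot-1}$ and $s_{\top+1}$ are exactly the outer degeneracies $s_0$ and $s_\top$ of $X$, and the extra coface $[-1]\to[0]$ acts as $d_1\colon X_2\to X_1$. So as written your argument only proves that $i\upperstar u\upperstar f$ (the double dec of $f$) is cartesian as a map of simplicial spaces, which is weaker than the statement and not enough for its later uses (for instance the equivalence between $i\upperstar A\to X$ being cULF and its transpose $A\to u\upperstar X$ being cartesian).

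The gap closes immediately once the accompanying misconception is removed: you worry that a cULF map ``need not be cartesian'' on outer codegeneracies, but every codegeneracy of $\Delta$ preserves endpoints, hence is generic, and conservative means cartesian on all degeneracy maps; cULF $=$ cartesian on all generic maps therefore covers $s_0$ and $s_\top$ as well. The clean statement that completes your proof is: every arrow of $\Xi$ preserves the top and bottom elements, so its image under $u$ is a generic map of $\Delta$ (the extra generators go to $s^0$, $s^{\top}$, and the inner coface $d^1\colon[1]\to[2]$); since $f$ is cartesian on all generic maps, every naturality square of $u\upperstar f$ --- including those for the extra degeneracies and those involving the $[-1]$-components --- is a pullback. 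With that observation substituted for the erroneous parenthetical, your reindexing bookkeeping (which is otherwise correct: the cofaces of $\Xi$ do become inner cofaces $d^1,\dots,d^{n+1}$ of $\Delta$) and the pasting argument go through, and the proof coincides with the paper's.
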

\begin{proof}
  The \culf condition on $f$ says
  it is cartesian on `everything' except outer face 
  maps, which are thrown away when taking $u\upperstar f$.
\end{proof}
\noindent
Note that the converse is not always true: if $u\upperstar f$ is cartesian then
$f$ is ULF, but there is no information about $s_0: Y_0 \to Y_1$, so we 
cannot conclude that $f$ is conservative.

Dually:
\begin{lemma}\label{i*cart}
  If a map of  $\Xi\op$-spaces  $g : B \to A$ is cartesian
 (or just cartesian on inner face and degeneracy maps),
  then $i\upperstar g : i\upperstar B \to i\upperstar A$ is cartesian.
\end{lemma}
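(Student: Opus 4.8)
The plan is to argue dually to Lemma~\ref{u*f(cULF)=cart}, using the explicit description of $i\upperstar$. Recall that, applied to a $\Xi\op$-space $A$, the functor $i\upperstar$ discards $A_{-1}$ and the two extra outer codegeneracies $s^{\bot-1}, s^{\top+1}$ in each degree, keeping $(i\upperstar A)_n = A_n$ for $n\geq 0$ with the remaining structure maps; equivalently, $i\upperstar g$ is just $g$ restricted along $i\colon\Delta\to\Xi$, so that the naturality square of $i\upperstar g$ at an arrow $\alpha$ of $\Delta$ \emph{is} the naturality square of $g$ at the arrow $i(\alpha)$ of $\Xi$.

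First I would record the standard fact that a map of simplicial spaces is cartesian --- meaning all its naturality squares are pullbacks --- as soon as it is cartesian on the generating coface and codegeneracy maps; this follows from the pasting lemma for pullbacks together with the fact that every arrow of $\Delta$ is a composite of cofaces and codegeneracies. Applied to $i\upperstar g$, this reduces the lemma to checking that the naturality square of $g$ at each $i(d^j)$ and each $i(s^j)$ is a pullback. Now each $i(d^j)$ is a coface of $\Xi$ (necessarily endpoint-preserving, hence inner), and each $i(s^j)$ is one of the codegeneracy maps of $\Xi$ distinct from the two extra outer ones $s^{\bot-1}, s^{\top+1}$; moreover the unique coface of $\Xi$ not of the form $i(d^j)$ is the exceptional one $[-1]\to[0]$, which involves $A_{-1}$ and so is irrelevant once $i\upperstar$ has been applied. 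In short, the face and degeneracy maps of $i\upperstar A$ are precisely the inner face and degeneracy maps of $A$.

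Hence, if $g$ is cartesian on all of $\Xi$ --- or even only on its inner face and degeneracy maps --- then it is in particular cartesian on every $i(d^j)$ and $i(s^j)$, and so $i\upperstar g$ is cartesian. I do not anticipate a genuine obstacle: the argument is a one-line observation once the bookkeeping is set up, and the only subtlety worth flagging is that an arrow like $i(d^0)$, which is an \emph{outer} coface inside $\Delta$, becomes an \emph{inner} coface of $\Xi$ (it preserves the two white endpoints), so that the weaker hypothesis in parentheses genuinely suffices and the statement is not merely the trivial restriction of cartesianness.
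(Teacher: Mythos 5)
Your proof is correct and matches the paper's (implicit) argument: the paper states this lemma without proof, as the dual of Lemma~\ref{u*f(cULF)=cart}, whose one-line proof is that the hypothesis gives cartesianness on all the structure maps that survive the restriction functor, the rest being thrown away. Your reduction to the generators $i(d^j)$, $i(s^j)$, and the observation that these are exactly the face and degeneracy maps of $A$ not involving the white endpoints (so that the weaker parenthetical hypothesis already covers them, including $i(d^0)$ and $i(d^n)$), is precisely that argument spelled out.
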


\begin{blanko}{Representables.}
  The representables on $\Xi$ we denote by $\Xi[-1]$, $\Xi[0]$, etc.  By
  convention we will also denote the terminal presheaf on $\Xi$ by $\Xi[-2]$,
  although it is not representable since we have chosen not to include $[-2]$ (a
  single white dot) in our definition of $\Xi$.  Note that \eqref{eq:XiDelta}
  says that $i\upperstar$ preserves representables:
  \begin{equation}\label{eq:Deltak+2}
  i\upperstar (\Xi[k]) \simeq \Delta[k\!+\!2], \qquad k\geq-1 .
  \end{equation}
\end{blanko}

\begin{blanko}{Stretched-cartesian factorisation system.}
  Call an arrow in $\Fun(\Xi\op,\Grpd)$ {\em stretched} if its $[-1]$-component is an
  equivalence.  Call an arrow {\em cartesian} if it is a cartesian natural
  transformation of $\Xi\op$-spaces.  By general theory (\ref{eq1-cart}) we
  have a factorisation system on $\Fun(\Xi\op,\Grpd)$ where the left-hand class
  is formed by the stretched maps and the right-hand class consists of the cartesian maps.
  In concrete terms, given any map $B \to
  A$, since $[-1]$ is terminal in $\Xi\op$, one can pull back the whole
  diagram $A$ along the map $B_{-1} \to A_{-1}$.  The resulting
  $\Xi\op$-space $A'$ is cartesian over $A$ by construction,
  and by the universal property of the pullback it receives a map from
  $B$ which is manifestly the identity in degree $-1$, hence
  stretched.
  $$
  \xymatrix@R-1em@C+1em{B\ar[rr]\ar[rd]_{\mathrm{stretched}}&&A\\&A'\ar[ru]_{\mathrm{cartesian}}}
  $$
\end{blanko}

\begin{blanko}{Flanked $\Xi\op$-spaces.}\label{flanked}
  A $\Xi\op$-space $A$ is called \emph{flanked} if the extra outer degeneracy
  maps form cartesian squares with opposite outer face maps. 
  Precisely, for $n\geq 0$
  $$
  \xymatrix{
  A_{n-1}   \ar[d]_{s_{\bot-1}} & \ar[l]_{d_\top} \dlpullback A_n \ar[d]^{s_{\bot-1}} \\
  A_n & \ar[l]^{d_\top} A_{n+1}
  }
  \qquad
  \xymatrix{
  A_{n-1}   \ar[d]_{s_{\top+1}} & \ar[l]_{d_\bot} \dlpullback A_n 
  \ar[d]^{s_{\top+1}} \\
  A_n & \ar[l]^{d_\bot} A_{n+1}
  }
  $$
  Here we have included the special extra face map $A_{-1} \leftarrow A_0$ both as
  a top face map and a bottom face map.
\end{blanko}

\begin{blanko}{Example.}
  If $\CC$ is a small category with an initial object and a terminal
  object, then its nerve is naturally a $\Xi\op$-space (contractible in
  degree $-1$): the extra bottom degeneracy maps add the initial object to
  the beginning of a sequence of composable arrows, and the extra top
  degeneracy maps add the terminal object to the end of a sequence of
  composable arrow.  The flanking condition then states precisely that
  these two objects are initial and terminal.
\end{blanko}

\begin{lemma}\label{s-1-newbonuspullbacks}
  (`Bonus pullbacks' for flanked spaces.)
  In a flanked $\Xi\op$-space $A$, all the following squares are pullbacks:
$$
\xymatrix @C=12pt @C=12pt {
A_{n-1} \ar[d]_{s_{\bot-1}} & \ar[l]_-{d_i} \dlpullback A_n \ar[d]^{s_{\bot-1}} \\
A_n  & \ar[l]^-{d_{i+1}} A_{n+1}
}
\
\xymatrix @C=12pt @C=12pt {
A_{n-1} \drpullback \ar[r]^-{s_j} \ar[d]_{s_{\bot-1}} & A_n \ar[d]^{s_{\bot-1}} \\
A_n \ar[r]_-{s_{j+1}} & A_{n+1}
}
\
\xymatrix @C=12pt @C=12pt {
A_{n-1} \ar[d]_{s_{\top+1}} & \ar[l]_-{d_i} \dlpullback A_n \ar[d]^{s_{\top+1}} \\
A_n  & \ar[l]^-{d_i} A_{n+1}
}
\
\xymatrix @C=12pt @C=12pt {
A_{n-1} \drpullback \ar[r]^-{s_j} \ar[d]_{s_{\top+1}} & A_n \ar[d]^{s_{\top+1}} \\
A_n \ar[r]_-{s_j} & A_{n+1}
}
$$
This is for all $n\geq 0$, and the running indices are $0 \leq i \leq n$ and $-1 \leq j \leq n$.
\end{lemma}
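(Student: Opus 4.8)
The plan is to derive each of the four squares from the two defining pullbacks of a flanked $\Xi\op$-space (Definition~\ref{flanked}) by the usual pasting-and-cancellation arguments, treating the $s_{\bot-1}$-squares and the $s_{\top+1}$-squares by symmetric arguments. For the first family of squares (the $d_i$-versus-$d_{i+1}$ pullbacks along $s_{\bot-1}$), I would argue by downward induction on $i$: the case $i=n$, i.e.\ the top face map $d_\top$, is exactly the left-hand defining square in \ref{flanked}. For the inductive step, I would use the simplicial identity relating $d_i s_{\bot-1}$ to $s_{\bot-1} d_i$ together with the fact that the bottom codegeneracy $s_{\bot-1}$ commutes past the non-outer face maps in the expected way, so that a square for index $i$ pastes together with a known square (built from another face or from a degeneracy relation) to produce the square for index $i-1$; by the pasting lemma for pullbacks, if the composite and one piece are pullbacks then so is the other. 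The second family (the $s_j$-versus-$s_{j+1}$ pullbacks along $s_{\bot-1}$) is handled by the same bookkeeping: the simplicial identities among the codegeneracies $s_j$ and the extra outer codegeneracy $s_{\bot-1}$ turn each such square into a paste of the already-established face-map squares, or directly exhibit it as a retract/section pattern whose pullback property follows formally.

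Concretely, the key technical input is the explicit combinatorics of $\Xi$ recorded just before the lemma: the category $\Xi$ differs from $\Delta$ by the extra coface $[-1]\to[0]$ and the two extra outer codegeneracies $s^{\bot-1}$, $s^{\top+1}$, and the commutation relations between these and the ordinary $d_i, s_j$ are the standard ones (they are precisely what makes $u:\Xi\to\Delta$ and its various decalage descriptions work). So the first step is to write down these relations carefully — e.g.\ $s_{\bot-1}\circ d_\top = d_\top\circ s_{\bot-1}$, $s_{\bot-1}\circ d_i$ versus $d_{i+1}\circ s_{\bot-1}$ for $i\le n$, and the analogous identities on degeneracies — and identify, for each of the four squares in the statement, which composite of defining squares it is a face of. The second step is then purely formal: invoke the pullback pasting lemma (a square is a pullback iff, after pasting with a known pullback on a compatible side, the composite is a pullback) repeatedly to propagate the pullback property from the two given squares to all the squares listed, across the full ranges $0\le i\le n$ and $-1\le j\le n$.

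I would present the $s_{\bot-1}$ column of squares in full and then remark that the $s_{\top+1}$ column follows by the evident symmetry of $\Xi$ (the order-reversing automorphism swapping top and bottom, which exchanges $d_\top\leftrightarrow d_\bot$, $s_{\bot-1}\leftrightarrow s_{\top+1}$, and reindexes $d_i, s_j$ accordingly), so no separate argument is needed. It is also worth noting explicitly, as the statement already hints with ``we have included the special extra face map $A_{-1}\leftarrow A_0$ both as a top face map and a bottom face map'', that the base case of the induction at the very bottom ($n=0$) is covered by this convention, so the induction is properly grounded.

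The main obstacle I anticipate is not conceptual but organisational: keeping the index shifts straight. The codegeneracy $s_{\bot-1}:[n]\to[n-1]$ interacts with $d_i$ by producing $d_i$ on one side and $d_{i+1}$ on the other (hence the asymmetric labelling in the first square but the symmetric labelling $d_i$/$d_i$ in the third, reflecting that $s_{\top+1}$ fixes the bottom part), and one must be careful that the square one pastes on is genuinely one that has already been proved a pullback at that stage of the induction rather than circularly assuming the conclusion. Once the commutation table is laid out correctly, each individual verification is a one-line application of the pasting lemma, so the proof is short; the risk is entirely in getting the table right.
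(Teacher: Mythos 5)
Your overall strategy is exactly the ``easy argument with pullbacks'' the paper intends (its proof is a one-line reference to the analogous bonus-pullback lemma of Part~I): everything is to be squeezed out of the two defining flanking squares by the simplicial identities of $\Xi$ together with pullback pasting/cancellation, the degeneracy squares are to be obtained from the face squares by a section pattern, and the $s_{\top+1}$ column follows by the bottom/top symmetry. Two of your three ingredients are fine as stated: the section trick works (paste the degeneracy square for $s_j$ against the face square for $d_j$ --- for $j=-1$ use $d_0\circ s_{\bot-1}=\id$, for $j=n$ use $d_\top\circ s_{\top+1}=\id$ --- so that both horizontal composites are identities, and cancel against the already-established face square), and the symmetry argument is unproblematic.

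The one step that does not go through as described is the inductive scheme for the face squares. At fixed degree $n$ there is no pasting with a known square that produces the square for index $i-1$ from the square for index $i$: the identities $d_id_\top=d_\top d_i$ and $d_{i+1}d_\top=d_\top d_{i+1}$ only yield the rectangle identity
$$\bigl[\,\text{square}(d_i,d_{i+1})\ \text{at degree }n\,\bigr]\ast\bigl[\,\text{flanking square at degree }n-1\,\bigr]\;=\;\bigl[\,\text{flanking square at degree }n\,\bigr]\ast\bigl[\,\text{square}(d_i,d_{i+1})\ \text{at degree }n-1\,\bigr],$$
which keeps the index $i$ fixed and lowers the degree. (Trying to lower $i$ at fixed $n$ by pasting with a degeneracy square fails for the usual reason: cancellation lets you deduce the \emph{near} square from the outer rectangle and the far one, not the other way round, and the relevant composites $d_{i-1}s_j$ never become identities for the index you need.) So the induction must run on the degree: the base case is $i=n$ (the flanking square, with the $n=0$ convention on $A_{-1}\leftarrow A_0$ grounding it), and the step deduces the square at $(i,n)$ by cancelling the flanking square at degree $n-1$ against the displayed rectangle, whose other decomposition is a pasting of two known pullbacks. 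Equivalently, one can avoid induction altogether: paste the square at $(i,n)$ with the flanking squares all the way down to $A_{-1}$; since $[-1]$ is initial in $\Xi$ and $d^{i+1}$ preserves the bottom black dot, both long composites agree with the iterated flanking rectangle, which is a pullback, and one cancellation finishes the proof. With that correction your bookkeeping closes up and the proof is complete.
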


\begin{proof}
  Easy argument with pullbacks, similar to \cite[3.10]{GKT:DSIAMI-1}.
\end{proof}

Note that in the upper rows, all face or degeneracy maps are present, whereas
in the lower rows, there is one map missing in each case.  In particular,
all the `new' outer degeneracy maps appear as pullbacks of `old' degeneracy
maps.

\begin{blanko}{Flanked decomposition spaces.}
  By definition, a {\em flanked decomposition space} is a $\Xi\op$-space
  $A:\Xi\op\to\Grpd$ that is flanked and whose underlying $\simplexcategory\op$-space
  $i\upperstar A$ is a decomposition space.  Let $\FD$ denote the full
  subcategory of $\Fun(\Xi\op,\Grpd)$ spanned by the flanked
  decomposition spaces.
\end{blanko}

\begin{lemma}\label{lem:flanked}
  If $X$ is a decomposition space, then 
  $u\upperstar X$ is a flanked decomposition space.
\end{lemma}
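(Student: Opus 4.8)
The statement unwinds into two assertions: that the underlying simplicial space $i\upperstar u\upperstar X$ is a decomposition space, and that $u\upperstar X$ is flanked. The plan is to dispatch the first by the decalage calculus already set up, and the second by a short bookkeeping argument combined with the device of reindexing through a decalage so as to make the decomposition-space axiom applicable.

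For the first assertion: as recorded in the discussion of the basic adjunction, the comonad $i\upperstar u\upperstar$ is exactly $\Dec_\bot\Dec_\top$, so $i\upperstar u\upperstar X = \Dec_\bot\Dec_\top X$. Since $X$ is a decomposition space, Theorem~\ref{thm:decomp-dec-segal} gives that $\Dec_\top X$ is a Segal space; a Segal space is a decomposition space, so applying Theorem~\ref{thm:decomp-dec-segal} once more shows that its lower dec $\Dec_\bot\Dec_\top X$ is again a Segal space, in particular a decomposition space.

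For the flanked condition I would first translate the defining squares. Under $u$, the extra outer codegeneracy $s^{\bot-1}$ of $\Xi$ goes to the bottom codegeneracy of $\Delta$, $s^{\top+1}$ to the top codegeneracy, and the top and bottom cofaces of $\Xi$ go to \emph{inner} cofaces of $\Delta$; substituting $A_n = X_{n+2}$, the two families of flanked squares for $A = u\upperstar X$ become, for $n\geq 0$,
$$
\xymatrix{
X_{n+1} \ar[d]_{s_0} & X_{n+2} \ar[l]_{d_{n+1}} \ar[d]^{s_0} \\
X_{n+2} & X_{n+3} \ar[l]^{d_{n+2}}
}
\qquad\qquad
\xymatrix{
X_{n+1} \ar[d]_{s_{n+1}} & X_{n+2} \ar[l]_{d_1} \ar[d]^{s_{n+2}} \\
X_{n+2} & X_{n+3} \ar[l]^{d_1}
}
$$
(in degree $n=0$ the single special face map $A_0\to A_{-1}$ plays the role of both $d_\top$ and $d_\bot$). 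These commute by the simplicial identities, and it remains to see that they are pullbacks. Now, read literally in $X$ the left-hand square corresponds to the pushout in $\Delta$ obtained by gluing the codegeneracy $\sigma^0$ against the \emph{inner} coface $\delta^{n+2}$ --- not a generic--free pushout --- so the decomposition-space axiom for $X$ does not apply directly. The key observation is that the same square, read instead in $\Dec_\top X$, has $d_{n+1}$ and $d_{n+2}$ as the \emph{top} (hence free) face maps and $s_0$ as a bottom degeneracy, and is then precisely the image of the generic--free pushout gluing $\sigma^0$ against the top coface; since $\Dec_\top X$ is a Segal space by Theorem~\ref{thm:decomp-dec-segal}, a fortiori a decomposition space, that image is a pullback. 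Symmetrically, the right-hand square is a pullback by re-reading it in $\Dec_\bot X$, where $d_1$ becomes the bottom (free) face map --- or, more cheaply, by applying the previous case to $X\op$, since a decomposition space remains one under passage to the opposite simplicial space.

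The only real content is this last point; everything else is bookkeeping with the indexing conventions relating $\Xi$, $\Delta$ and the two decalages. The one place care is needed is the realisation that the flanked squares are \emph{not} images of generic--free pushouts of $X$ itself, and that passing to a decalage --- which converts an inner face map into an outer, hence free, one --- is exactly what brings the decomposition-space axiom to bear.
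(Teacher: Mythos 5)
Your proof is correct, and for the flanking condition it takes a genuinely different route from the paper. The paper disposes of the statement in two lines: the underlying simplicial space is the double dec, hence a Segal space (as you also argue, via Theorem~\ref{thm:decomp-dec-segal}), and the flanking squares are pullbacks because they are among the ``bonus pullbacks'' of a decomposition space established in Part~I of the series (the lemma cited as [3.9] there), which are proved by direct pullback-pasting arguments in $X$ itself. You instead rederive exactly the needed pullbacks inside this paper's toolkit: you translate the flanking squares correctly (for $A=u\upperstar X$ they are $s_0$ against $d_{n+1},d_{n+2}$ and $s_{\top}$ against $d_1$, which are generic-against-generic in $X$, so the decomposition-space axiom does not apply directly), and then observe that after passing to $\Dec_\top X$ (resp.\ $\Dec_\bot X$) the offending inner face maps become top (resp.\ bottom), i.e.\ free, face maps, so the squares become images of genuine generic--free pushouts ($\sigma^0$ against the top coface, and dually), hence pullbacks since the decs are Segal spaces by Theorem~\ref{thm:decomp-dec-segal}. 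This buys self-containedness --- no appeal to the external bonus-pullback lemma --- and isolates a nice conceptual point, that decalage is precisely what converts these squares into instances of the axiom; the paper's citation is shorter and invokes a more general statement (the full family of bonus pullbacks, which is also what the analogous Lemma~\ref{s-1-newbonuspullbacks} mimics for flanked spaces). Your alternative appeal to $X\op$ for the second family is also fine (the decomposition-space axioms are self-dual under the order-reversing involution of $\Delta$), though that fact is not stated in this paper; the direct $\Dec_\bot$ reading you also give suffices.
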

\begin{proof}
  The underlying simplicial space is clearly a decomposition space (in fact a 
  Segal space), since all we have done is to throw away some outer face maps 
  and reindex.  The flanking condition comes from the `bonus pullbacks' of $X$,
  cf.~\cite[3.10]{GKT:DSIAMI-1}.
\end{proof}

It follows that the basic adjunction $i\upperstar \isleftadjointto u\upperstar $
restricts to an adjunction 
$$\xymatrix{
i\upperstar  : \FD \ar@<+3pt>[r] & \ar@<+3pt>[l]\Decomp : u\upperstar 
}$$
between flanked decomposition spaces (certain $\Xi\op$-spaces)
and decomposition spaces.

\begin{lemma}\label{counit-cULF}
  The counit
  $$
  \varepsilon_X : i\upperstar u\upperstar X \to X
  $$
  is \culf, when $X$ is a decomposition space.
\end{lemma}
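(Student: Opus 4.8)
The plan is to recall the identification of the comonad $i\upperstar u\upperstar$ with the double décalage $\Dec_\bot\Dec_\top$, so that the counit $\epsilon_X$ factors as $\Dec_\bot\Dec_\top X \xrightarrow{\ d_\bot\ } \Dec_\top X \xrightarrow{\ d_\top\ } X$ (with appropriate reindexing). Since cULF maps are closed under composition, it suffices to show that each of these two décalage counit maps is cULF when $X$ is a decomposition space. For the map $d_\top : \Dec_\top X \to X$ this is precisely one half of Theorem~\ref{thm:decomp-dec-segal}, which is available to us. So the only thing left is $d_\bot : \Dec_\bot(\Dec_\top X) \to \Dec_\top X$.

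First I would observe that, by the other half of Theorem~\ref{thm:decomp-dec-segal}, $\Dec_\top X$ is itself a Segal space whenever $X$ is a decomposition space; in particular $\Dec_\top X$ is again a decomposition space. Now apply Theorem~\ref{thm:decomp-dec-segal} to $Y := \Dec_\top X$: its lower décalage counit $d_\bot : \Dec_\bot Y \to Y$ is cULF. Composing, $d_\top d_\bot : \Dec_\bot \Dec_\top X \to X$ is cULF, and under the identification of the basic adjunction this is exactly $\epsilon_X$. An alternative route, should one prefer not to invoke the Segal property of $\Dec_\top X$, is to check the cULF condition for $\epsilon_X$ directly: cULF means cartesian on every generic (i.e.\ inner face and all degeneracy) map, and the squares expressing cartesianness of $\epsilon_X$ on a generic map $\alpha$ are, by inspection of the indexing conventions for $b$ (add-bottom) and the add-top monad, among the pullback squares asserted by the decomposition-space axiom for $X$ (together with its derived `bonus pullbacks', cf.~\cite[3.8--3.9]{GKT:DSIAMI-1}), since a generic map of $X$ pulled back along the inclusions $[k]\hookrightarrow[k+2]$ remains generic.

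The main obstacle is purely bookkeeping: tracking which face and degeneracy maps of $\Dec_\bot\Dec_\top X$ correspond, after the double shift in index, to which generic maps of $X$, and confirming that outer face maps of $X$ — the ones \emph{not} controlled by the decomposition-space axiom and the only ones on which $\epsilon_X$ carries no information — never intervene. This is handled cleanly by the conceptual description of $\Dec_\bot$ and $\Dec_\top$ as precomposition with the add-bottom and add-top monads on $\Delta$: the generic maps out of $[k]$ correspond, under $[k]\mapsto[k+2]$, to generic maps out of $[k+2]$ that fix the two new endpoints, so cartesianness of $\epsilon_X$ on them is literally an instance of the decomposition-space pullbacks. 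I would therefore present the short argument via Theorem~\ref{thm:decomp-dec-segal} applied twice as the clean proof, relegating the direct verification to a remark.
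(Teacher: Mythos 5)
Your proposal is correct and is essentially the paper's own argument: the paper's proof simply cites Theorem~\ref{thm:decomp-dec-segal}, and your factorisation $\epsilon_X = d_\top\circ d_\bot$ with the theorem applied to $X$ and then to the Segal (hence decomposition) space $\Dec_\top X$, plus closure of cULF maps under composition, is exactly what that citation is standing in for.
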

\begin{proof}
  This follows from Theorem~\ref{thm:decomp-dec-segal}.
\end{proof}

\begin{lemma}\label{unit-cart}
  The unit
  $$
  \eta_A: A \to u\upperstar i\upperstar A
  $$
  is cartesian, when $A$ is flanked.
\end{lemma}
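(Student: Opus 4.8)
The plan is to show that every naturality square of $\eta_A$ is a pullback, reducing to the generating morphisms of $\Xi$ and then invoking the flanking condition \ref{flanked} together with the ``bonus pullbacks'' of Lemma~\ref{s-1-newbonuspullbacks}. First I would record the combinatorics: the monad $u\upperstar i\upperstar$ is precomposition with the endofunctor $iu:\Xi\to\Xi$, which sends $[k]$ to $[k\!+\!2]$ by adjoining a black dot just above the bottom white dot and one just below the top white dot; and $\eta_A$ is $A$ whiskered with the natural transformation $\theta:iu\Rightarrow\id_\Xi$ whose component $\theta_{[k]}:[k\!+\!2]\to[k]$ collapses those two extreme black dots back onto the white endpoints, i.e. $\theta_{[k]}=s^{\bot-1}\circ s^{\top+1}$. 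Factoring $iu\cong c^\top\circ c^\bot$, where $c^\bot$ (resp. $c^\top$) blackens the bottom (resp. top) white dot and adjoins a fresh white one, and factoring $\theta$ accordingly, exhibits $\eta_A$ as a composite
$$
A \longrightarrow (c^\bot)\upperstar A \longrightarrow (c^\top)\upperstar(c^\bot)\upperstar A = u\upperstar i\upperstar A .
$$
Since pullback squares paste, it suffices to prove that each of the two maps is cartesian; the second is the order-reversal mirror image of the first, so it is enough to treat $A\to(c^\bot)\upperstar A$, which on components is the new bottom degeneracy map $s_{\bot-1}$.

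To show $A\to(c^\bot)\upperstar A$ is cartesian it suffices, by the pasting lemma, to check its naturality square over each generating morphism $\mu$ of $\Xi$ (the cofaces, including the exceptional one $[-1]\to[0]$, and the codegeneracies). The key point is that $c^\bot$ shifts everything up by one dot at the bottom: an inner coface or codegeneracy of $\Xi$ is carried to an inner coface or codegeneracy with its index shifted by one; the bottom outer coface $d^\bot$ (and likewise $[-1]\to[0]$) is carried to the inner coface $d^1$; the new bottom codegeneracy $s^{\bot-1}$ is carried to an inner codegeneracy; while the top data $d^\top$ and $s^{\top+1}$ are left essentially untouched. Feeding this description in, each naturality square becomes, after the reindexing, one of the squares asserted to be a pullback in Lemma~\ref{s-1-newbonuspullbacks} (for the inner faces and degeneracies, and for $d^\bot$ and $[-1]\to[0]$, using that the running index there is allowed to be extreme, e.g. $i=0$) or in the flanking condition~\ref{flanked} (for $d^\top$). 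The two cases that are not literally one of those displayed squares are $\mu=s^{\bot-1}$ and $\mu=s^{\top+1}$ themselves; for these one checks commutativity from the cosimplicial identities in $\Xi$ and then obtains the pullback property by a short pasting of the squares in Lemma~\ref{s-1-newbonuspullbacks}, exactly in the style of \cite[3.8]{GKT:DSIAMI-1}. This establishes that $A\to(c^\bot)\upperstar A$ is cartesian, and dually for the top factor, whence $\eta_A$ is cartesian.

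The main obstacle is bookkeeping rather than depth: one must verify carefully how $c^\bot$ and $c^\top$ act on the generating morphisms of $\Xi$ --- in particular that an outer coface turns into an inner one after the shift --- and match the resulting reindexed squares to the precise indices appearing in \ref{s-1-newbonuspullbacks} and \ref{flanked}, being attentive to the low-degree boundary cases (the exceptional coface $[-1]\to[0]$ and the squares involving $A_{-1}$). It is worth organising the verification so that the two new codegeneracies and the two outer cofaces are singled out as the only cases not immediate from the $\Delta$-level bonus pullbacks of the decomposition space $i\upperstar A$.
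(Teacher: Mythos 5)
Your proposal is correct and takes essentially the same route as the paper: the paper's proof likewise observes that $\eta_A$ is given degreewise by $s_{\bot-1}$ followed by $s_{\top+1}$ and that the required naturality pullbacks are precisely the flanking squares of \ref{flanked} and the bonus pullbacks of Lemma~\ref{s-1-newbonuspullbacks}. Your factorisation through $(c^\bot)\upperstar A$ and the generator-by-generator check merely spell out the bookkeeping the paper leaves implicit (and the squares over $s^{\bot-1}$, $s^{\top+1}$ are in fact transposes of the displayed bonus squares with extreme index $j=-1$, so no extra pasting is really needed).
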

\begin{proof}
  The map $\eta_A$ is given by $s_{\bot-1}$ followed by $s_{\top+1}$.
  The asserted pullbacks are precisely the `bonus pullbacks' of 
  Lemma~\ref{s-1-newbonuspullbacks}.
\end{proof}

From Lemma~\ref{unit-cart} and Lemma~\ref{counit-cULF} we get:

\begin{cor}\label{u*i*(cart)=cart}
  The monad $u\upperstar i\upperstar : \FD \to \FD$ preserves cartesian maps.
\end{cor}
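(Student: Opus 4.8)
The plan is to prove Corollary~\ref{u*i*(cart)=cart} by combining Lemma~\ref{unit-cart} and Lemma~\ref{counit-cULF} via a naturality argument, the way such composite statements about monads usually go. Let $g : B \to A$ be a cartesian map between flanked decomposition spaces. I want to show that $u\upperstar i\upperstar g : u\upperstar i\upperstar B \to u\upperstar i\upperstar A$ is again cartesian.

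First I would consider the naturality square for the unit $\eta$ of the basic adjunction, evaluated at $g$:
$$\xymatrix{
B \ar[r]^-{\eta_B} \ar[d]_{g} & u\upperstar i\upperstar B \ar[d]^{u\upperstar i\upperstar g} \\
A \ar[r]_-{\eta_A} & u\upperstar i\upperstar A .
}$$
By Lemma~\ref{unit-cart}, both horizontal maps $\eta_B$ and $\eta_A$ are cartesian (since $B$ and $A$ are flanked), and by hypothesis the left vertical map $g$ is cartesian. Since cartesian natural transformations are closed under composition and satisfy the usual cancellation property (if the composite of two maps is cartesian and the second factor is cartesian, then the first factor is cartesian --- this is the ``prism'' argument for pullback squares, applied levelwise over $\Xi\op$), the composite $\eta_A \circ g = (u\upperstar i\upperstar g) \circ \eta_B$ is cartesian, and since $\eta_B$ is cartesian, we may cancel it to conclude that $u\upperstar i\upperstar g$ is cartesian.

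Alternatively, and perhaps more cleanly, I would argue directly: the monad $u\upperstar i\upperstar$ is built from the two functors $i\upperstar$ and $u\upperstar$, so it suffices to note that $i\upperstar$ preserves cartesian maps by Lemma~\ref{i*cart} (a cartesian map of $\Xi\op$-spaces restricts to a cartesian map of underlying simplicial spaces), and that $u\upperstar$ preserves cartesian maps --- this last point follows because $u\upperstar$ is just precomposition with the functor $u : \Xi \to \Delta$, and precomposition with any functor preserves cartesian (i.e.\ pointwise-pullback) natural transformations, as pullbacks in the target $\infty$-category $\Grpd$ are computed pointwise. The only subtlety is that the composite $u\upperstar i\upperstar$ must land in $\FD$, but that is already guaranteed since we are told it defines a monad on $\FD$; so nothing needs checking there.

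The main obstacle, such as it is, is really just bookkeeping: making sure that ``cartesian'' is being used consistently for maps of $\Delta\op$-spaces versus maps of $\Xi\op$-spaces, and that the cancellation property for cartesian maps is applied at the right level of generality (pointwise over the indexing category, using that a composite of pullback squares is a pullback and the converse-cancellation for pullbacks). Given Lemmas~\ref{unit-cart} and~\ref{counit-cULF} --- or just Lemmas~\ref{i*cart} and the precomposition remark --- the corollary is immediate, so I expect the proof to be a single short paragraph invoking these.
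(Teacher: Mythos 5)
Your primary argument (the naturality-square-plus-cancellation one) has a genuine gap: the cancellation property for cartesian maps only goes one way, and you apply it in the wrong direction. For $f\colon X\to Y$ and $g\colon Y\to Z$, pasting of pullback squares gives: if $g\circ f$ is cartesian and $g$ (the map applied \emph{second}) is cartesian, then $f$ is cartesian. It does \emph{not} give: if $g\circ f$ and $f$ are cartesian, then $g$ is cartesian --- for instance, any non-cartesian $g$ precomposed with the map from the levelwise empty presheaf has cartesian composite and cartesian first factor. In your square $(u\upperstar i\upperstar g)\circ\eta_B=\eta_A\circ g$ you know the composite is cartesian and that $\eta_B$, the map applied \emph{first}, is cartesian (Lemma~\ref{unit-cart}); that is exactly the configuration from which nothing can be concluded about $u\upperstar i\upperstar g$. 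The unit $\eta_B=s_{\bot-1}s_{\top+1}$ is a degeneracy-type map, not any kind of epimorphism, so no special feature of the situation rescues this cancellation. So the unit/counit route, as you set it up, does not close.

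Your alternative argument, on the other hand, is correct and complete, and is the cleanest way to see the statement: $i\upperstar$ sends cartesian maps of $\Xi\op$-spaces to cartesian maps of simplicial spaces (Lemma~\ref{i*cart}), and $u\upperstar$, being precomposition with $u\colon\Xi\to\Delta$, sends cartesian natural transformations to cartesian natural transformations, since the naturality square of $u\upperstar f$ over a morphism $\phi$ of $\Xi$ is just the naturality square of $f$ over $u(\phi)$. (Equivalently you could quote Lemma~\ref{u*f(cULF)=cart}: a cartesian map of simplicial spaces is in particular cULF, and $u\upperstar$ of a cULF map is cartesian.) Note this needs neither flankedness nor the decomposition-space axiom, so it proves preservation of cartesian maps for arbitrary $\Xi\op$-spaces; the only role of $\FD$ is that $u\upperstar i\upperstar$ restricts to it (Lemma~\ref{lem:flanked}), as you observe. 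The paper gives no written proof beyond asserting that the corollary follows from Lemmas~\ref{unit-cart} and~\ref{counit-cULF}, which suggests a route like your first one; your second argument is more direct and is the one to keep --- the first should be discarded.
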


\begin{lemma}\label{cart&cULF}
  $i\upperstar A \to X$ is \culf in $\Decomp$ if and only if the transpose
  $A \to u\upperstar  X$ is cartesian in $\FD$.
\end{lemma}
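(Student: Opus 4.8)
The statement to prove is Lemma~\ref{cart&cULF}: for $A$ a flanked $\Xi\op$-space and $X$ a decomposition space, the map $i\upperstar A \to X$ is cULF in $\Decomp$ if and only if its transpose $A \to u\upperstar X$ is cartesian in $\FD$. The plan is to exploit the triangle identities of the basic adjunction $i\upperstar \isleftadjointto u\upperstar$ together with the results already assembled: the counit $\epsilon_X : i\upperstar u\upperstar X \to X$ is cULF (Lemma~\ref{counit-cULF}), the unit $\eta_A : A \to u\upperstar i\upperstar A$ is cartesian when $A$ is flanked (Lemma~\ref{unit-cart}), and the two functors interact well with the two classes of maps via Lemma~\ref{u*f(cULF)=cart} and Lemma~\ref{i*cart}.

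First I would fix notation: write $g : A \to u\upperstar X$ for the transpose of $f : i\upperstar A \to X$, so that by the standard formula $f = \epsilon_X \circ i\upperstar g$ and $g = u\upperstar f \circ \eta_A$. For the ``if'' direction, suppose $g$ is cartesian. Then $i\upperstar g$ is cartesian by Lemma~\ref{i*cart} (a cartesian map of $\Xi\op$-spaces restricts to a cartesian map of underlying simplicial spaces; in particular it is cartesian on all inner face and degeneracy maps, which is what that lemma needs). A cartesian map of simplicial spaces is in particular cULF. Since $\epsilon_X$ is cULF by Lemma~\ref{counit-cULF}, and cULF maps are closed under composition, $f = \epsilon_X \circ i\upperstar g$ is cULF. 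For the ``only if'' direction, suppose $f$ is cULF. Then $u\upperstar f$ is cartesian by Lemma~\ref{u*f(cULF)=cart}, and $\eta_A$ is cartesian by Lemma~\ref{unit-cart} (here is where flankedness of $A$ is used). Cartesian maps compose, so $g = u\upperstar f \circ \eta_A$ is cartesian.

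The one subtlety I would want to address carefully — and the place I expect the argument to need an extra line — is the ``only if'' direction at the level of $A_{-1}$ versus the rest: $u\upperstar f$ forgets the $[-1]$-component entirely, so composing with $\eta_A$ only recovers a cartesian map on those squares that $\eta_A$ witnesses as pullbacks, i.e.\ the bonus pullbacks of Lemma~\ref{s-1-newbonuspullbacks}. One must check that this genuinely produces cartesianness of $g$ on \emph{all} the structure maps of $\Xi\op$, including the new coface $[-1]\to[0]$ and the new outer codegeneracies. This is exactly the content of the bonus-pullback lemma: every new outer degeneracy appears as a pullback of an old one, and the face map $A_{-1}\leftarrow A_0$ is included there as both a top and a bottom face map, so pasting the pullback squares from $\eta_A$ against the cartesian squares of $u\upperstar f$ yields cartesian squares for $g$ over the missing maps as well. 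A dual remark handles the ``if'' direction: one should note that $i\upperstar g$ being cartesian really does suffice, since $f$ differs from it only by post-composition with the cULF map $\epsilon_X$, and cULF is the correct (weaker than cartesian) notion to land in here because $\epsilon_X$ need not be cartesian on $s_0$.

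Thus the lemma follows purely formally from the triangle identities once the compatibility of $i\upperstar, u\upperstar$ with the cartesian/cULF classes (Lemmas~\ref{u*f(cULF)=cart}, \ref{i*cart}, \ref{counit-cULF}, \ref{unit-cart}) is in hand; the only real work is the bookkeeping over the extra column $A_{-1}$ and the extra outer maps, which is dispatched by the bonus pullbacks.
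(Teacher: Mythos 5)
Your proof is correct and is essentially the paper's own argument: transpose $f$ and $g$ via the adjunction, then use that the unit is cartesian (\ref{unit-cart}), the counit is cULF (\ref{counit-cULF}), that $i\upperstar$ and $u\upperstar$ exchange the two classes (\ref{i*cart}, \ref{u*f(cULF)=cart}), and that each class is closed under composition. The ``subtlety'' you raise about the $[-1]$-component is not actually needed: Lemmas \ref{u*f(cULF)=cart} and \ref{unit-cart} already assert that $u\upperstar f$ and $\eta_A$ are cartesian natural transformations of $\Xi\op$-diagrams on \emph{all} structure maps (the bonus pullbacks being the content of the proof of \ref{unit-cart}), so the composite is cartesian by plain pasting, with no extra bookkeeping.
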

\begin{proof}
  This follows since the unit is cartesian (\ref{unit-cart}), the counit is 
  \culf (\ref{counit-cULF}), and $u\upperstar $ and $i\upperstar $ send those
  two classes to each other (\ref{u*f(cULF)=cart} and \ref{i*cart}).
\end{proof}

\begin{prop}\label{prop:i*flanked=Segal}
  If $A$ is a flanked decomposition space, then $i\upperstar A $ is a Segal 
  space.
\end{prop}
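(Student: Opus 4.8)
The plan is to reduce the Segal condition for $i\upperstar A$ to a condition on $A$ that can be checked using the flanking structure together with the decomposition-space axiom for $i\upperstar A$. First I would recall that, by \ref{thm:decomp-dec-segal}, a decomposition space is Segal precisely when it equals its own upper and lower decalage (i.e.~the counit comparison maps are equivalences, not merely cULF). Since $i\upperstar A$ is by hypothesis a decomposition space, it suffices to show that the comparison maps $\Dec_\top(i\upperstar A) \to i\upperstar A$ and $\Dec_\bot(i\upperstar A) \to i\upperstar A$ are equivalences. The key observation is that taking the lower dec of $i\upperstar A$ amounts, on the $\Xi$-side, to remembering one of the outer white dots as structure — more precisely, the decalages of $i\upperstar A$ can be computed from $A$ by restricting along the appropriate functors $\Delta \to \Xi$, because $A$ already carries the extra outer coface map $[-1]\to[0]$ and the extra outer codegeneracies.

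Concretely, I would argue as follows. The flanking squares of \ref{flanked}, together with the bonus pullbacks of \ref{s-1-newbonuspullbacks}, say exactly that the new outer degeneracy maps $s_{\bot-1}$ and $s_{\top+1}$ exhibit $A_n$ as built from $A_{n-1}$ by pullback along old structure maps. Iterating, one sees that $A$, with its outer degeneracies, is recovered from $i\upperstar A$ in a way that is forced: the white dots contribute no genuinely new simplices but only pullback copies. Dually, this means that $\Dec_\bot (i\upperstar A)$ and $\Dec_\top (i\upperstar A)$ are expressed in terms of $A$ in degree-shifted form, and the counit maps $d_\bot, d_\top$ correspond under this identification to the flanking face maps $d_\top, d_\bot$ of $A$ — which are the legs of the cartesian (indeed pullback) squares in the flanking condition. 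The upshot is that $\Dec_\bot(i\upperstar A) \to i\upperstar A$ and $\Dec_\top(i\upperstar A) \to i\upperstar A$ are equivalences, because along each of these the relevant structure is literally a pullback of an identity (the flanking squares have an identity-like leg coming from the contractibility that white dots enforce). Hence by \ref{thm:decomp-dec-segal} applied in the Segal direction, $i\upperstar A$ is a Segal space.

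An alternative, perhaps cleaner route: show directly that the unit $\eta_A : A \to u\upperstar i\upperstar A$ is not just cartesian (\ref{unit-cart}) but an equivalence when $A$ is flanked — this is plausible since both the extra-degeneracy-removal and the outer-face-removal in the double dec $u\upperstar i\upperstar$ are inverted by the flanking pullbacks, which degenerate to isomorphisms once one tracks that the $[-1]$-level data is preserved. Granting $A \simeq u\upperstar i\upperstar A$, one then uses that $u\upperstar X$ is always a flanked decomposition space whose underlying simplicial space $i\upperstar u\upperstar X = \Dec_\bot\Dec_\top X$ is a Segal space (being a double decalage of a decomposition space — this is part of \ref{thm:decomp-dec-segal}), so $i\upperstar A \simeq i\upperstar u\upperstar i\upperstar A = \Dec_\bot\Dec_\top(i\upperstar A)$ is Segal. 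I expect the main obstacle to be bookkeeping: carefully matching the reindexed outer face/degeneracy maps of $A$ with the decalage structure maps of $i\upperstar A$, and verifying that the flanking pullback squares really do force the comparison maps to be equivalences rather than merely cULF — in other words, checking that the "contractible in a suitable sense" content of the outer white dots is genuinely being used, and is enough.
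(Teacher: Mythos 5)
There is a genuine gap: both of your routes hinge on claims that are false. Theorem \ref{thm:decomp-dec-segal} does not say that a decomposition space is Segal precisely when the counits $d_\bot\colon\Dec_\bot X\to X$ and $d_\top\colon\Dec_\top X\to X$ are equivalences, and that criterion is wrong: already for $X=\Delta[n]$ (a Segal space) the counit $\Dec_\bot\Delta[n]\to\Delta[n]$ is not a levelwise equivalence, and in general $\Dec_\bot X$ is a resolution of the constant simplicial space on $X_0$, so demanding $\Dec_\bot X\simeq X$ would force $X$ to be essentially constant, not Segal. Likewise, in your alternative route the unit $\eta_A\colon A\to u\upperstar i\upperstar A$ is \emph{not} an equivalence for flanked $A$: its component at $[-1]$ is $s_{\bot-1}s_{\top+1}\colon A_{-1}\to A_1$, and for $A=\Xi[k]$ this is the inclusion of a point into $\Delta([1],[k+2])$. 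The flanking (bonus) pullbacks give exactly cartesianness of $\eta_A$ (Lemma \ref{unit-cart}) and nothing more; note also that $u\upperstar i\upperstar A$ is not reduced (its $[-1]$-component is $A_1$), so the fact that cartesian maps between algebraic intervals are equivalences does not apply here.

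The missing idea is that cartesianness already suffices, because the Segal condition is stable under cartesian maps: a simplicial space cartesian over a Segal space is itself a Segal space (\cite[2.11]{GKT:DSIAMI-1}). This is how the paper argues: put $X=i\upperstar A$; then $i\upperstar\eta_A\colon i\upperstar A\to i\upperstar u\upperstar i\upperstar A=\Dec_\bot\Dec_\top X$ is cartesian (Lemmas \ref{unit-cart} and \ref{i*cart}), and its target is a Segal space by Theorem \ref{thm:decomp-dec-segal} because $X$ is by hypothesis a decomposition space; hence $i\upperstar A$ is Segal. So your second route assembles the right ingredients (the unit, the double dec, Segal-ness of the double dec of a decomposition space), but you must weaken ``equivalence'' to ``cartesian'' and invoke pullback-stability of the Segal condition, rather than trying to identify $i\upperstar A$ with its own double decalage.
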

\begin{proof}
  Put $X = i\upperstar A$.  We have the maps
  $$
  \xymatrix{
  i\upperstar A \ar[r]^-{i\upperstar  \eta_{A}} 
  & i\upperstar u\upperstar i\upperstar  A =
  u\upperstar  i\upperstar X \ar[r]^-{\varepsilon_{X}} 
  & X= i\upperstar 
  A}
  $$
  Now $X$ is a decomposition space by assumption, so $i\upperstar 
  u\upperstar X = \Decbot{\Dectop{X}}$ is a Segal space and the counit is \culf
  (both statements by Theorem~\ref{thm:decomp-dec-segal}).
  On the other hand, since $A$ is flanked, the unit $\eta$ is 
  cartesian by Lemma~\ref{unit-cart}, hence $i\upperstar \eta$ is 
  cartesian by Lemma~\ref{i*cart}.  Since $i\upperstar A$ is thus cartesian over a Segal 
  space, it is itself a Segal space (\cite[2.11]{GKT:DSIAMI-1}).
\end{proof}

\begin{lemma}\label{Flanked/cart} 
  If $B\to A$ is a cartesian map of $\Xi\op$-spaces and $A$ is a flanked
  decomposition space then so is $B$.
\end{lemma}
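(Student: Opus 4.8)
The statement to prove is Lemma~\ref{Flanked/cart}: if $B\to A$ is a cartesian map of $\Xi\op$-spaces and $A$ is a flanked decomposition space, then $B$ is too. There are two things to check: that $B$ is flanked (the extra outer-degeneracy squares are pullbacks), and that $i\upperstar B$ is a decomposition space. The plan is to reduce both to stability of pullback squares under pasting against a cartesian natural transformation, using the fact --- recalled from \cite{GKT:DSIAMI-1} and used repeatedly above --- that ``being cartesian over a decomposition space (resp.\ Segal space, resp.\ a space with certain distinguished pullback squares) is inherited''.

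First I would handle the flanking condition. The assertion is that for each $n\geq 0$ the two squares in \ref{flanked} (involving $s_{\bot-1}$ and $d_\top$, resp.\ $s_{\top+1}$ and $d_\bot$) are pullbacks for $B$. Since $B\to A$ is cartesian, every naturality square of $B\to A$ for a map in $\Xi\op$ is a pullback; in particular the squares for $d_\top$, $d_\bot$, $s_{\bot-1}$, $s_{\top+1}$ are pullbacks. So given one of the flanking squares for $B$, I can paste it with the (cartesian, hence pullback) naturality square of $B\to A$ along the appropriate edge to get a rectangle which, by the flanking of $A$ and the pullback lemma, is a pullback; then by the pullback-cancellation lemma (the outer rectangle and one inner square are pullbacks $\Rightarrow$ the other inner square is a pullback) the flanking square of $B$ itself is a pullback. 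This is a routine diagram chase exactly of the type invoked in the proof of Lemma~\ref{s-1-newbonuspullbacks}.

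Next I would show $i\upperstar B$ is a decomposition space. By Lemma~\ref{i*cart}, since $B\to A$ is cartesian (in particular on inner face and degeneracy maps), the map $i\upperstar B \to i\upperstar A$ is cartesian. Now $i\upperstar A$ is a decomposition space by hypothesis, and a simplicial space cartesian over a decomposition space is again a decomposition space (the generic--free pushout squares are sent to pullbacks because one can paste with the cartesian naturality squares and cancel, exactly as in the argument cited as \cite[2.11]{GKT:DSIAMI-1} for the Segal case, used in the proof of Proposition~\ref{prop:i*flanked=Segal}). Hence $i\upperstar B$ is a decomposition space. Combining the two parts, $B$ is a flanked decomposition space.

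I do not expect a serious obstacle here; the only thing requiring a little care is bookkeeping of which edge of the flanking square to paste the cartesian naturality square along, and making sure the pasted rectangle is genuinely one of the ``bonus pullback'' shapes available for $A$ (possibly invoking Lemma~\ref{s-1-newbonuspullbacks} rather than just the defining squares of \ref{flanked}). Accordingly I would phrase the proof tersely: ``Cartesianness of $B\to A$ gives pullback naturality squares; pasting and cancelling against the flanking/bonus pullbacks of $A$ yields the flanking pullbacks of $B$; and $i\upperstar B\to i\upperstar A$ is cartesian by Lemma~\ref{i*cart}, so $i\upperstar B$ is a decomposition space since $i\upperstar A$ is.''
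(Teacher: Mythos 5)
Your argument is correct. Note that the paper states Lemma~\ref{Flanked/cart} without proof, regarding it as routine: the intended justification is exactly the standard transfer argument you give, namely that cartesianness of $B\to A$ makes every naturality square a pullback, so pasting and cancelling against the flanking squares of $A$ (and, via Lemma~\ref{i*cart}, against the pullbacks witnessing the decomposition-space property of $i\upperstar A$) yields the corresponding pullbacks for $B$ --- the same style of argument as the citation of \cite[2.11]{GKT:DSIAMI-1} in Proposition~\ref{prop:i*flanked=Segal} and the easy pullback chase of Lemma~\ref{s-1-newbonuspullbacks}. Your only superfluous caution is the appeal to the bonus pullbacks: the defining flanking squares of $A$ suffice, since each flanking square of $B$ sits directly over the corresponding one of $A$.
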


\begin{cor}
  The stretched-cartesian factorisation system
  restricts to a factorisation system on $\FD$.
\end{cor}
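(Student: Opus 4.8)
The plan is to verify the three axioms of a factorisation system directly for the two classes \emph{wide} and \emph{cartesian} restricted to $\FD$, leaning on the ambient factorisation system on $\Fun(\Xi\op,\Grpd)$ supplied by \ref{eq1-cart} (evaluation at the terminal object $[-1]$ of $\Xi\op$) and on Lemma~\ref{Flanked/cart}. Because $\FD\subset\Fun(\Xi\op,\Grpd)$ is a \emph{full} subcategory, the only non-automatic point is that the $(\text{wide},\text{cartesian})$-factorisation of a map between flanked decomposition spaces again passes through a flanked decomposition space; stability under equivalences and orthogonality then follow formally.

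Concretely, I would start with a map $g\colon B\to A$ in $\FD$ and form its wide-cartesian factorisation $B\xrightarrow{\ \text{wide}\ }A'\xrightarrow{\ \text{cart.}\ }A$ in $\Fun(\Xi\op,\Grpd)$, where, as recalled above in the construction of the wide-cartesian factorisation system, $A'$ is obtained by pulling back the whole $\Xi\op$-diagram $A$ along $B_{-1}\to A_{-1}$. By construction $A'\to A$ is cartesian, and $A$ is a flanked decomposition space by hypothesis; hence Lemma~\ref{Flanked/cart} gives that $A'$ too is a flanked decomposition space, i.e.\ an object of $\FD$. Thus the entire factorisation lives in $\FD$. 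That the restricted classes are closed under equivalences is immediate, and orthogonality in $\FD$ is inherited from the ambient category: any lifting problem in $\FD$ with a wide map on the left and a cartesian map on the right is a lifting problem in $\Fun(\Xi\op,\Grpd)$, whose space of fillers is contractible there, and since $\FD$ is full that space coincides with the space of fillers computed in $\FD$. This yields all three axioms, hence a factorisation system on $\FD$.

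The genuine content of the corollary is really Lemma~\ref{Flanked/cart}, so that is where I expect the work to lie: one has to check that pulling back a flanked decomposition space $A$ along a cartesian map $B\to A$ preserves the two families of flanking squares of \ref{flanked} (a routine pullback-pasting argument, since those squares pull back along a cartesian natural transformation) \emph{and} that the underlying $\Delta\op$-space of $B$ is again a decomposition space (using that $i\upperstar$ sends cartesian maps to cartesian maps by Lemma~\ref{i*cart}, together with the stability of decomposition spaces under cartesian maps into a decomposition space). Granting Lemma~\ref{Flanked/cart}, the corollary itself is essentially formal.
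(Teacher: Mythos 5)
Your argument is correct and is exactly the reasoning the paper intends: the corollary is stated as an immediate consequence of Lemma~\ref{Flanked/cart}, the point being precisely that the middle object of the wide-cartesian factorisation is cartesian over the (flanked decomposition space) codomain and hence lies in $\FD$, while equivalence-stability and orthogonality are inherited because $\FD$ is full. Your additional sketch of how Lemma~\ref{Flanked/cart} itself would be proved is consistent with the paper, which states that lemma without proof.
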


\begin{lemma}\label{repr=flanked}
  The representable functors $\Xi[k]$ are flanked.
\end{lemma}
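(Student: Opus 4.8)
The plan is to verify the two defining conditions of a flanked decomposition space directly for the representable $\Xi[k]$: first that $\Xi[k]$ is flanked in the sense of \ref{flanked}, and second that its underlying simplicial space $i\upperstar(\Xi[k]) = \Delta[k\!+\!2]$ (using \eqref{eq:Deltak+2}) is a decomposition space. The second point is immediate: every representable $\Delta[n]$ is the nerve of the poset $[n]$, hence a Segal space, hence a decomposition space by the remark after \ref{bla:tight}'s neighbour (``Every Segal space is a decomposition space''). So the real content is the flanking condition.

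For the flanking squares, I would unwind the definition. An $n$-simplex of $\Xi[k]$ is a map $[n]\to[k]$ in $\Xi$, i.e.\ by \eqref{eq:XiDelta} a monotone map $[n]\to[k\!+\!2]$ preserving top and bottom --- equivalently, by the convention imagining $[k]$ as a column of $k$ black dots between two white dots, a monotone map of the underlying linear orders sending the two extreme points to the two extreme points. The extra outer degeneracy $s_{\bot-1}:\Xi[k]_n \to \Xi[k]_{n-1}$ precomposes with the ``collapse the bottom black dot onto the bottom white dot'' map, and $d_\top$ precomposes with the top coface. I would then check that the square
$$
\xymatrix{
\Xi[k]_{n-1} \ar[d]_{s_{\bot-1}} & \ar[l]_{d_\top} \Xi[k]_n \ar[d]^{s_{\bot-1}} \\
\Xi[k]_n & \ar[l]^{d_\top} \Xi[k]_{n+1}
}
$$
is a pullback of sets (groupoids with no nontrivial automorphisms), by exhibiting the inverse: given an $n$-simplex $\sigma:[n]\to[k]$ and an $n$-simplex $\tau:[n]\to[k]$ with $s_{\bot-1}\sigma = d_\top\tau$, one reconstructs the unique $(n\!+\!1)$-simplex mapping to both. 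This is a combinatorial bijection check on order-preserving maps, entirely parallel to the ``bonus pullbacks'' computation \cite[3.8, 3.9]{GKT:DSIAMI-1}, and symmetrically for the $s_{\top+1}$/$d_\bot$ square.

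Alternatively --- and this is probably the cleanest route, avoiding re-deriving the square-by-square check --- I would deduce it from Lemma~\ref{lem:flanked}: that lemma says $u\upperstar X$ is flanked whenever $X$ is a decomposition space. Since $\Delta[k\!+\!2]$ is a decomposition space and $u\upperstar(\Delta[k\!+\!2])$ is a $\Xi\op$-space, it is flanked; and one checks that $u\upperstar(\Delta[n])$ is precisely the representable $\Xi[n\!-\!2]$, because $u\upperstar$ is defined by precomposition with $u:\Xi\to\Delta$, whose left adjoint $i$ sends $[k]$ to $[k]$ with the convention $u([k])=[k\!+\!2]$, so $u\upperstar(\Delta[k\!+\!2]) = \Delta[k\!+\!2]\circ u = \Xi(-,[k]) = \Xi[k]$ by the adjunction isomorphism \eqref{eq:XiDelta}. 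Thus $\Xi[k] = u\upperstar(\Delta[k\!+\!2])$ is flanked by Lemma~\ref{lem:flanked}, and its underlying simplicial space $i\upperstar\Xi[k] = \Delta[k\!+\!2]$ is a decomposition space, so $\Xi[k]$ is a flanked decomposition space.

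The only mildly delicate point --- the ``main obstacle'', such as it is --- is confirming the identification $u\upperstar(\Delta[k\!+\!2]) \simeq \Xi[k]$ as $\Xi\op$-presheaves, i.e.\ that precomposition with $u$ carries the representable at $[k\!+\!2]$ to the representable at $[k]$. This is exactly the Yoneda translation of the adjunction $i \dashv u$ recorded in \eqref{eq:XiDelta}: $u\upperstar(\Delta[m])([n]) = \Delta([n], u[?])$... more precisely $u\upperstar(\Delta[m])([n]) = \Delta(u[n], [m]) = \Delta([n\!+\!2],[m])$, which for $m = k\!+\!2$ equals $\Xi([n],[k])$ by \eqref{eq:XiDelta}. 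Once that bookkeeping is pinned down, the result follows with no further work; everything else is quoting Lemma~\ref{lem:flanked} and the fact that nerves of posets are decomposition spaces.
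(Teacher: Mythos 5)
Your preferred route (the one you call ``probably the cleanest'') contains a genuine error: $u\upperstar(\Delta[k\!+\!2])$ is \emph{not} the representable $\Xi[k]$. Your own computation shows $(u\upperstar\Delta[k\!+\!2])([n])=\Delta(u[n],[k\!+\!2])=\Delta([n\!+\!2],[k\!+\!2])$, but the adjunction isomorphism \eqref{eq:XiDelta} identifies $\Xi([n],[k])$ with $\Delta([n],[k\!+\!2])$, not with $\Delta([n\!+\!2],[k\!+\!2])$; these differ already for $n=k=0$ (ten maps versus three). What the adjunction $i\dashv u$ gives is the \emph{other} direction, $i\upperstar\Xi[k]=\Delta[k\!+\!2]$, which is exactly \eqref{eq:Deltak+2}; $u\upperstar$ does not carry representables to representables (that would require a right adjoint to $u$, which does not exist: $[-1]$ is initial in $\Xi$, so $\Xi([-1],r[m])$ would be a point while $\Delta(u[-1],[m])=\Delta([1],[m])$ is not). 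So Lemma~\ref{lem:flanked} cannot be invoked this way, and the step you rely on (``once that bookkeeping is pinned down, the result follows with no further work'') is precisely the step that fails. Note also that trying to repair it via the unit $\Xi[k]\to u\upperstar i\upperstar\Xi[k]$ and Lemma~\ref{Flanked/cart} is circular, since cartesianness of the unit is equivalent to the flanking condition you are trying to prove.

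Your first, direct route is the right one and is essentially the paper's proof: the flanking squares for $\Xi[k]=\Xi(-,[k])$ are obtained by applying $\Xi(-,[k])$ to squares in $\Xi$ built from $s^{\bot-1}$, $d^\top$ (resp.\ $s^{\top+1}$, $d^\bot$), and those squares are pushouts in $\Xi$, so the representable turns them into pullbacks. As written, however, you only assert the bijection check, and your description of it has the shape of the pullback wrong: in the square of \ref{flanked} the pullback corner is $A_n$, so the datum is an $(n\!-\!1)$-simplex $x$ and an $(n\!+\!1)$-simplex $y$ with $x\circ s^{\bot-1}=y\circ d^\top$, from which one must produce a \emph{unique} $z\colon[n]\to[k]$ with $z\circ d^\top=x$ and $z\circ s^{\bot-1}=y$ --- not ``the unique $(n\!+\!1)$-simplex'' from two $n$-simplices. (Also, on presheaves the extra degeneracy goes $\Xi[k]_{n-1}\to\Xi[k]_n$, as in your displayed square, not the direction stated in your prose.) If you carry out that uniqueness argument, or equivalently verify the pushout property of the corresponding squares in $\Xi$, the proof is complete; the additional verification that $i\upperstar\Xi[k]$ is a decomposition space is not needed for this lemma, which asserts only flankedness.
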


\begin{proof}
  Since the pullback squares required for a presheaf to be flanked
  are images of pushouts in $\Xi$, this follows since representable
  functors send colimits to limits.
\end{proof}

\section{Intervals and the factorisation-interval construction}
\label{sec:fact-int}

\begin{blanko}{Complete $\Xi\op$-spaces.}
  A $\Xi\op$-space is called {\em complete} if all degeneracy maps are
  monomorphisms.  We are mostly interested in this notion for flanked
  decomposition spaces.  In this case, if just $s_0: A_0 \to A_1$ is a
  monomorphism, then all the degeneracy maps are monomorphisms.  This follows
  because on the underlying decomposition space, we know 
  \cite[2.5]{GKT:DSIAMI-2} that
  $s_0: A_0 \to A_1$ being a monomorphism implies that all the simplicial
  degeneracy maps are monomorphisms, and by flanking we then deduce that also the new
  outer degeneracy maps are monomorphisms.  Denote by $\cFD \subset\FD$ the full
  subcategory spanned by the complete flanked decomposition spaces.
  
  It is clear that if $X$ is a complete decomposition space, then $u\upperstar X$
  is a complete flanked decomposition space, and if $A$ is a complete flanked
  decomposition space then $i\upperstar A $ is a complete decomposition space.
  Hence the fundamental adjunction
  $\xymatrix{
  i\upperstar  : \FD \ar@<+3pt>[r] & \ar@<+3pt>[l]\Decomp : u\upperstar 
  }$
  between flanked decomposition
  spaces and decomposition spaces restricts to an adjunction
  $$\xymatrix{
  i\upperstar  : \cFD \ar@<+3pt>[r] & \ar@<+3pt>[l]\cDecomp : u\upperstar 
  }$$
 between complete flanked decomposition
  spaces and complete decomposition spaces.
  
  Note that anything cartesian over a 
  complete $\Xi\op$-space is again complete.
\end{blanko}

\begin{blanko}{Reduced $\Xi\op$-spaces.}
  A $\Xi\op$-space $A:\Xi\op\to\Grpd$ is called {\em reduced} when $A[-1]\simeq 
  \terminal$.
\end{blanko}

\begin{lemma}\label{reduced/wide}
  If $A\to B$ is a stretched map of $\Xi\op$-spaces and $A$ is reduced then $B$ is 
  reduced.
\end{lemma}

\begin{blanko}{Algebraic intervals.}\label{aInt}
  An {\em algebraic interval} is  by definition a reduced complete flanked 
  decomposition space.
  We denote by $\aInt$ the full subcategory of $\Fun(\Xi\op,\Grpd)$
  spanned by the algebraic intervals.  In other words, a {\em morphism of
  algebraic intervals} is just a natural transformation of functors
  $\Xi\op\to\Grpd$.
  Note that the underlying decomposition space of an interval is always a Segal
  space.
\end{blanko}

\begin{lemma}
  All representables $\Xi[k]$ are algebraic intervals (for $k\geq -1$), 
  and also the terminal presheaf $\Xi[-2]$ is an algebraic interval.
\end{lemma}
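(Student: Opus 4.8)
The plan is to check directly the four defining ingredients of an algebraic interval (\ref{aInt}) for each representable $\Xi[k]$ with $k\geq -1$, and for the terminal presheaf $\Xi[-2]$: that it is flanked, that its underlying simplicial space is a decomposition space, that it is complete, and that it is reduced. All four are short, and most of the content is already available in the preceding lemmas.

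First I would dispose of the \emph{flanked} condition: for the representables this is precisely Lemma~\ref{repr=flanked}, and for $\Xi[-2]$ I would note that $\Xi[-2] = u\upperstar(\ast)$, where $\ast$ denotes the terminal (hence decomposition) simplicial space, so that Lemma~\ref{lem:flanked} applies --- alternatively, every square of one-point groupoids is trivially a pullback. Next, for the \emph{underlying decomposition space}: by \eqref{eq:Deltak+2} we have $i\upperstar(\Xi[k]) = \Delta[k\!+\!2]$, which is the nerve of a linear order and hence a Segal space, thus a decomposition space; likewise $i\upperstar(\Xi[-2])$ is the terminal simplicial space, which is again a (Segal, hence) decomposition space. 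Together with the flanked condition, this establishes that each $\Xi[k]$ and $\Xi[-2]$ is a flanked decomposition space, an object of $\FD$.

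For \emph{completeness} I would use that all presheaves in sight take values in sets (discrete $\infty$-groupoids), so that the monomorphisms among their maps are exactly the injections; since they are flanked, it suffices --- as recorded in the discussion of complete flanked decomposition spaces, where $s_0$ a monomorphism forces all degeneracy maps to be monomorphisms --- to see that $s_0$ is injective, which holds because $s_0$ is split by $d_1$. For \emph{reducedness} I would observe that $[-1]$ is an initial object of $\Xi$, so that $\Xi[k][-1] = \Xi([-1],[k])$ is a singleton, while $\Xi[-2][-1] = \ast$ by the convention fixing the terminal presheaf. Assembling these four points gives that every $\Xi[k]$ ($k\geq -1$) and $\Xi[-2]$ is a reduced complete flanked decomposition space, i.e.\ an algebraic interval.

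I do not expect any genuine obstacle here: the only step that is not a one-line invocation of an earlier result is recognising $\Delta[k\!+\!2]$ (and the terminal simplicial space) as a Segal space, which is standard. The argument is essentially bookkeeping over the definitions, using \eqref{eq:Deltak+2}, Lemma~\ref{repr=flanked}, and Lemma~\ref{lem:flanked}.
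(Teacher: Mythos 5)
Your proof is correct and follows essentially the same route as the paper: direct verification of the definition, citing Lemma~\ref{repr=flanked} for flankedness and the identification $i\upperstar(\Xi[k])=\Delta[k\!+\!2]$ from \eqref{eq:Deltak+2} for the (complete) decomposition-space and reducedness conditions. Your only additions are spelled-out details the paper leaves implicit --- notably handling the flanking of the non-representable $\Xi[-2]$ via $\Xi[-2]=u\upperstar(\ast)$ and Lemma~\ref{lem:flanked}, and checking completeness via the split monomorphism $s_0$ on discrete (set-valued) presheaves --- both of which are fine.
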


\begin{proof}
  It is clear that all these presheaves are contractible in degree $-1$, and
  they are flanked by Lemma~\ref{repr=flanked}.  It is also clear from
  \eqref{eq:Deltak+2} that their underlying simplicial spaces are complete
  decomposition spaces (they are even Rezk complete Segal spaces).
\end{proof}

\begin{lemma}
  $\Xi[-1]$ is an initial object in $\aInt$.
\end{lemma}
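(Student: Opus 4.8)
The plan is to show that $\Xi[-1]$ is initial in $\aInt$ by exhibiting, for every algebraic interval $A$, a contractible space of maps $\Xi[-1] \to A$ in the functor category $\Fun(\Xi\op,\Grpd)$. Since $\Xi[-1]$ is a representable presheaf, the Yoneda lemma gives
$$
\Map_{\Fun(\Xi\op,\Grpd)}(\Xi[-1], A) \simeq A_0 ,
$$
so the statement is equivalent to the assertion that $A_0 \simeq *$ for every algebraic interval $A$. First I would recall that an algebraic interval is by definition reduced, so $A[-1] = A_{-1} \simeq *$; the content is thus to upgrade contractibility in degree $-1$ to contractibility in degree $0$.

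The key step is to use the flanking condition together with reducedness. Since $A$ is flanked, Lemma~\ref{s-1-newbonuspullbacks} (or directly the defining cartesian squares in \ref{flanked}) provides a pullback square relating the new outer degeneracy map $s_{\bot-1}: A_0 \to A_{-1}$... wait, more carefully: the relevant bonus pullback in \ref{flanked} with $n=0$ reads
$$
\xymatrix{
A_{-1} \ar[d]_{s_{\bot-1}} & \ar[l]_{d_\top} \dlpullback A_0 \ar[d]^{s_{\bot-1}} \\
A_0 & \ar[l]^{d_\top} A_1 .
}
$$
That square alone is not quite enough; instead I would argue directly from the unit $\eta_A : A \to u\upperstar i\upperstar A$, which is cartesian by Lemma~\ref{unit-cart}. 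In degree $0$ this says $A_0$ is obtained by pullback from the corresponding piece of $u\upperstar i\upperstar A$ over $A_{-1} \simeq *$; combined with the fact that the underlying decomposition space $i\upperstar A$ is complete (so $s_0$ is mono) and that the double-dec $\Dec_\bot\Dec_\top$ has $0$-simplices given by $2$-simplices of $i\upperstar A$, one reads off that $A_0$ is equivalent to a fibre of a structure map landing in $A_{-1}$, hence contractible. Alternatively, and perhaps more cleanly, I would observe that the new outer coface map $[-1]\to[0]$ in $\Xi$ and the outer codegeneracy $s^{\bot-1}:[0]\to[-1]$ exhibit $A_{-1}$ as a retract of $A_0$, and then use the flanking pullback to show this retraction is in fact an equivalence: the section $A_{-1}\to A_0$ is the pullback of an equivalence.

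The main obstacle I anticipate is getting the bookkeeping of the extra outer face and degeneracy maps exactly right — there are two new outer codegeneracies $s^{\bot-1}$ and $s^{\top+1}$ and one new outer coface $[-1]\to[0]$, and one must be careful which flanking square to invoke and in which degree. Once the identification $\Map(\Xi[-1],A)\simeq A_0 \simeq *$ is established, initiality is immediate, since a terminal mapping space for each object is exactly the definition of an initial object in the $\infty$-category $\aInt$ (a full subcategory of $\Fun(\Xi\op,\Grpd)$).
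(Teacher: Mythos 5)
Your Yoneda identification is wrong, and it derails the whole argument. Since $\Xi[-1]$ is the representable presheaf on the object $[-1]\in\Xi$, the Yoneda lemma gives $\Map_{\Fun(\Xi\op,\Grpd)}(\Xi[-1],A)\simeq A([-1])=A_{-1}$, \emph{not} $A_0$. With the correct identification the lemma is immediate: an algebraic interval is by definition reduced, so $A_{-1}\simeq *$, hence the mapping space is contractible for every $A\in\aInt$; since $\aInt$ is a full subcategory of $\Fun(\Xi\op,\Grpd)$ and $\Xi[-1]$ is itself an algebraic interval (by the preceding lemma), $\Xi[-1]$ is initial. No flanking condition, no completeness, and no appeal to the unit $\eta_A$ or the double dec is needed.

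The statement you set out to prove instead, namely $A_0\simeq *$ for every algebraic interval $A$, is false, so no amount of care with the bookkeeping of the extra outer face and degeneracy maps can rescue that route. For instance, the representable $\Xi[0]$ is an algebraic interval with $\Xi[0]_0=\Xi([0],[0])\cong\Delta([0],[2])$, a three-element set; more conceptually, for the factorisation interval $I(a)$ of an arrow $a$ the degree-$0$ part is the space of two-step factorisations of $a$ (the ``midpoints'' of the interval), which is not contractible in general. Relatedly, your closing retract argument only shows that $A_{-1}$ is a retract of $A_0$ via $s^{\bot-1}$ and the extra coface $[-1]\to[0]$; the flanking pullbacks do not upgrade this retraction to an equivalence, and indeed it is not one.
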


\begin{lemma}\label{aInt:map=wide}
  Every morphism in $\aInt$ is stretched.
\end{lemma}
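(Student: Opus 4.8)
The plan is to show that any natural transformation $f: A \to B$ between algebraic intervals is automatically wide, i.e.\ that its $[-1]$-component $f_{-1}: A_{-1} \to B_{-1}$ is an equivalence. Since $A$ and $B$ are algebraic intervals, they are in particular \emph{reduced}, meaning $A_{-1} \simeq *$ and $B_{-1} \simeq *$. The map $f_{-1}$ is therefore a map between two objects each equivalent to the terminal $\infty$-groupoid $*$, and any such map is an equivalence. Hence $f$ is wide.

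The only thing requiring a word of care is the very definition of ``wide'' in $\Fun(\Xi\op, \Grpd)$: a map is wide if its $[-1]$-component is an equivalence, and $[-1]$ is the terminal object of $\Xi\op$ (equivalently the initial object of $\Xi$, as noted in the excerpt). Reducedness (\ref{reduced/wide} and the definition preceding it) is precisely the condition $A[-1] \simeq *$. First I would recall these two facts; then the argument is the one-line observation above. Concretely: the unique map $A_{-1} \to *$ is an equivalence, the unique map $B_{-1} \to *$ is an equivalence, and $f_{-1}$ fits into a commuting triangle with both of these over $*$, so by two-out-of-three $f_{-1}$ is an equivalence.

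There is essentially no obstacle here; the statement is a formal consequence of the definitions of ``algebraic interval'' (which bundles in ``reduced'') and ``wide''. If anything the only subtlety worth flagging is that this lemma is what makes the category $\aInt$ well-behaved with respect to the wide--cartesian factorisation system: every morphism of algebraic intervals lies in the left-hand (wide) class, so the factorisation of such a morphism has trivial cartesian part up to the point where one leaves $\aInt$. But for the proof of the lemma itself, no such structural input is needed — it is immediate from $A[-1]\simeq * \simeq B[-1]$.

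\begin{proof}
Let $f \colon A \to B$ be a morphism of algebraic intervals. By definition an algebraic interval is in particular reduced, so $A[-1] \simeq *$ and $B[-1] \simeq *$. The component $f_{-1} \colon A[-1] \to B[-1]$ is thus a map between $\infty$-groupoids each equivalent to the point; the commuting triangle formed by $f_{-1}$ and the (essentially unique) maps to $*$ shows, by the two-out-of-three property of equivalences, that $f_{-1}$ is an equivalence. Hence $f$ is wide.
\end{proof}
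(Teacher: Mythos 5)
Your proof is correct, and it is precisely the (immediate) argument the paper has in mind: the lemma is stated without proof there because, exactly as you observe, reducedness gives $A[-1]\simeq *\simeq B[-1]$, so the $[-1]$-component of any morphism is an equivalence and the morphism is wide. Nothing further is needed.
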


\begin{cor}
  If a morphism of algebraic intervals is cartesian, then it is an equivalence.
\end{cor}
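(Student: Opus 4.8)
The plan is to combine the two immediately preceding results. By Lemma~\ref{aInt:map=wide}, every morphism $f : A \to B$ in $\aInt$ is \emph{wide}, meaning its $[-1]$-component is an equivalence. If in addition $f$ is \emph{cartesian}, then I would argue that $f$ must be an equivalence as follows: a cartesian natural transformation of $\Xi\op$-diagrams is, by definition, one all of whose naturality squares are pullbacks; since $[-1]$ is terminal in $\Xi\op$, every object $[k]$ has a unique map $[k] \to [-1]$, and the corresponding naturality square exhibits $A[k]$ as the pullback of $B[k] \to B[-1] \leftarrow A[-1]$. But $A[-1] \to B[-1]$ is an equivalence (widening), so pulling back along it is an equivalence, whence $A[k] \xrightarrow{\ \sim\ } B[k]$ for every $k$. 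A levelwise equivalence of presheaves is an equivalence, so $f$ is an equivalence.

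More structurally, one can phrase this in terms of the wide--cartesian factorisation system on $\Fun(\Xi\op,\Grpd)$ (and its restriction to $\FD$, hence to $\aInt$): the corollary says precisely that the classes of wide maps and cartesian maps intersect only in equivalences, which is a general feature of any factorisation system (the intersection $E \cap F$ of the two classes always consists of equivalences, since such a map is orthogonal to itself). Given Lemma~\ref{aInt:map=wide}, which says every morphism of $\aInt$ lies in the left-hand (wide) class, a cartesian morphism lies in $E \cap F$ and is therefore an equivalence. I would present the short direct argument above but remark on this conceptual reading.

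There is essentially no obstacle here: the only thing being used beyond generalities about factorisation systems is Lemma~\ref{aInt:map=wide}, which is assumed, plus the observation that pullback along an equivalence is an equivalence and that $[-1]$ is terminal in $\Xi\op$ so that the cartesian condition controls every level $[k]$. The one point to state carefully is why "cartesian on all naturality squares" together with "equivalence at the terminal object $[-1]$" forces a levelwise equivalence — this is exactly the wide $\bot$ cartesian orthogonality used to set up the factorisation system in the first place, so it can be cited rather than re-proved.
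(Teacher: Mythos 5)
Your argument is correct and is essentially the intended one: the paper states this corollary as an immediate consequence of Lemma~\ref{aInt:map=wide}, the point being exactly that a map lying in both classes of the wide--cartesian factorisation system (equivalently, a cartesian map that is an equivalence at the terminal object $[-1]$ of $\Xi\op$, hence a levelwise equivalence by the pullback squares) must be an equivalence. Both your direct levelwise argument and the conceptual $E\cap F$ reading match what the paper leaves implicit.
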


\begin{blanko}{The factorisation-interval construction.}
  We now come to the important notion of factorisation
  interval $I(a)$ of a given arrow $a$ in a decomposition space $X$. 
  In the case where $X$ is a $1$-category the construction is due to
  Lawvere~\cite{Lawvere:statecats}:
  the objects of $I(a)$ are the two-step
  factorisations of $a$, with initial object id-followed-by-$a$ and terminal object
  $a$-followed-by-id.  The $1$-cells are arrows between such
  factorisations, or equivalently $3$-step factorisations, and so on.

  For a general (complete) decomposition space $X$, the idea is this:
  taking the double-dec of $X$ gives a simplicial object starting at $X_2$, but
  equipped with an augmentation $X_1 \leftarrow X_2$.   Pulling back this 
  simplicial object along $\name a: \terminal \to X_1$ yields a new simplicial object
  which is $I(a)$.  This idea can be formalised in terms of the basic adjunction
  as follows.
  
  By Yoneda, to give an arrow $a\in X_1$ is to give $\Delta[1] \to X$ in
  $\Fun(\simplexcategory\op,\Grpd)$, or in the full subcategory $\cDecomp$.
  By adjunction, this is equivalent to giving $\Xi[-1] \to u\upperstar X$ in
  $\cFD$.  Now factor this map as a stretched map followed by a cartesian map:
  $$\xymatrix{
  \Xi[-1] \ar[rr] \ar[rd]_{\mathrm{stretched}} && u\upperstar X  .\\
  & A \ar[ru]_{\mathrm{cart}} &
  }$$
  The object appearing in the middle is an algebraic
  interval since it is stretched under $\Xi[-1]$ (\ref{reduced/wide}).
  By definition, the factorisation interval of $a$ is
  $I(a) := i\upperstar A$, equipped with a \culf map to $X$, as seen in the diagram
  $$\xymatrix{
  \Delta[1] \ar[rr] \ar[rd] && i\upperstar u\upperstar X 
  \ar[r]^-{\varepsilon}_-{\culfsymb} & X .\\
  & I(a) \ar[ru]_{\culfsymb} &
  }$$
  The map $\Delta[1] \to I(a)$ equips $I(a)$ with two endpoints, and a longest 
  arrow between them.
  The \culf map $I(a) \to X$ sends the longest arrow of $I(a)$ to $a$.
  
  More generally, by the same adjunction argument,
  given an $k$-simplex $\sigma: \Delta[k]\to X$ with long edge $a$,
  we get a $k$-subdivision of $I(a)$, i.e.~a stretched map $\Delta[k]\to I(a)$.
  
  The construction shows, remarkably, that 
  as far as comultiplication is concerned, any decomposition space is locally
  a Segal space, in the sense that the comultiplication of an arrow $a$
  may as well be performed inside $I(a)$, which is a Segal space by 
  \ref{prop:i*flanked=Segal}.
  So while there may be no global way to compose arrows even if their source
  and targets match, the {\em decompositions} that exist do compose again.
\end{blanko}

We proceed to formalise the factorisation-interval construction. 

\begin{blanko}{Coreflections.}
  Inside the
  $\infty$-category of arrows $\Ar(\cFD)$, denote by
  $\Ar^{\sker}(\cFD)$ the full subcategory spanned by the
  stretched maps.  The stretched-cartesian factorisation system amounts to a coreflection
  $$
  w: 
  \Ar(\cFD) \longrightarrow \Ar^{\sker}(\cFD) ;
  $$
  it sends an arrow $A\to B$ to its stretched factor $A \to B'$, and in particular can
  be chosen to have $A$ as domain again (\ref{coreflection:w}).
  In particular, for each algebraic interval $A\in\aInt\subset\cFD$,
  the adjunction restricts to an adjunction between coslices, with coreflection
  $$
  w_A: \cFD _{A/} \longrightarrow \cFD^{\sker}_{A/}  .
  $$
  The first $\infty$-category is that of flanked decomposition spaces under $A$, and the
  second $\infty$-category is that of flanked decomposition spaces with a stretched map from
  $A$.  Now, if a flanked decomposition space receives a
  stretched map from an algebraic interval then it is itself an algebraic interval
  (\ref{reduced/wide}), and all maps of algebraic intervals are stretched
  (\ref{aInt:map=wide}).  So in the end the cosliced adjunction takes the form
  of the natural full inclusion functor
  $$
  v_A: \aInt_{A/} \to \cFD_{A/}
  $$
  and a right adjoint
  $$
  w_A: \cFD_{A/} \to \aInt_{A/} .
  $$
  
\begin{blanko}{Remark.}
  These observations amount to saying that
  the functor $v: \aInt \to \cFD$ is a {\em colocal left adjoint}.
  This notion is dual to the important concept
  of local right adjoint~\cite{Weber:TAC18,Gepner-Haugseng-Kock}.
\end{blanko}
  
We record the following obvious lemmas:
\begin{lemma}
  The coreflection $w$ sends cartesian maps to equivalences.
\end{lemma}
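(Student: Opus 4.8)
The plan is to reduce the statement to the essential uniqueness of the wide--cartesian factorisation; there is essentially nothing to do beyond unwinding definitions. Recall that ``cartesian map'' here means a map in the right-hand class of the wide--cartesian factorisation system on $\cFD$ (a cartesian natural transformation of $\Xi\op$-diagrams), and that by Lemma~\ref{coreflection:w} the coreflection $w$ sends an arrow $h$ to its wide factor, i.e.\ to the left-hand factor in any factorisation $h = f\circ e$ with $e$ wide and $f$ cartesian.

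First I would record that the identities --- hence all equivalences --- are wide, since the $[-1]$-component of an identity is an equivalence (equivalently, the intersection of the two classes is precisely the class of equivalences, as in the discussion following the definition of factorisation system). Now take a cartesian map $f\colon A\to B$. Then $f = f\circ \id_A$ is already a wide--cartesian factorisation of $f$: the map $\id_A$ lies in the wide class, and $f$ lies in the cartesian class by hypothesis. By essential uniqueness of factorisations, the wide factor $w(f)$ is equivalent to $\id_A$, and therefore $w(f)$ is an equivalence (an equivalence-object of $\Ar^{\mathrm w}(\cFD)$). An alternative, equally short route avoiding uniqueness is the right-cancellation property of the right class: factoring $f=\iota\circ e$ with $e$ wide and $\iota$ cartesian, both $f$ and $\iota$ cartesian forces $e$ cartesian, so $e$ lies in both classes and is an equivalence.

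I do not expect any genuine obstacle: the lemma is routine once Lemma~\ref{coreflection:w} and the standard closure properties of the two classes are in hand. The only points requiring minimal care are the bookkeeping of which class is the left one and the order of composition in the factorisation, together with the observation that the wide class contains the identities.
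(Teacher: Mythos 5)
Your argument is correct, and in fact the paper offers no proof at all here — the lemma is recorded as one of two ``obvious lemmas'' — so your unwinding via the essential uniqueness of the wide--cartesian factorisation (a cartesian $f$ factors as $f\circ\id$, with $\id$ wide, so its wide factor is an equivalence), or equivalently the cancellation property of the right class, is exactly the intended justification. One small remark: in its later use (Proposition~\ref{prop:I(cULF)=eq}) the statement is applied to morphisms of the coslice $\cFD_{A/}$ whose underlying arrow is cartesian, and the same uniqueness argument covers this: postcomposing a wide--cartesian factorisation of $f$ with a cartesian map again yields a wide--cartesian factorisation, so the induced map on wide factors is an equivalence in $\aInt_{A/}$.
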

\begin{lemma}
  The counit is cartesian.
\end{lemma}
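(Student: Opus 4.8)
The plan is to unwind the construction of the wide--cartesian factorisation system and observe that the counit of the coreflection is literally the cartesian leg of that factorisation, so that there is essentially nothing to prove beyond quoting the construction. Recall that $w$ is the right adjoint to the full inclusion $\Ar^{\mathrm w}(\cFD) \into \Ar(\cFD)$, so that the unit $\id \Rightarrow w\iota$ is invertible and the counit $\epsilon\colon \iota w \Rightarrow \id$ is the datum in question; on an arrow $f\colon X \to Y$ of $\cFD$ the functor $w$ returns the wide factor $X \to Y'$, where $Y'$ is the $\Xi\op$-diagram obtained by pulling $Y$ back along the map $X_{-1} \to Y_{-1}$ on values at the terminal object $[-1]$. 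Thus the component $\epsilon_f$ is the square in $\Ar(\cFD)$ whose domain leg is $\id_X$ and whose codomain leg is the comparison map $c\colon Y' \to Y$.

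First I would simply invoke the description of the wide--cartesian factorisation system recalled above: since $Y'$ is by construction a pullback of the whole diagram $Y$ along a map into its value over the terminal object, the comparison map $c\colon Y' \to Y$ is a cartesian natural transformation of $\Xi\op$-diagrams --- this is exactly the observation made there that the resulting $\Xi\op$-diagram ``is cartesian over $Y$ by construction''. Hence the codomain leg of the counit is cartesian, which is the assertion. The identical argument applies verbatim to the restricted coreflections $w_A\colon \cFD_{A/} \to \aInt_{A/}$: for an object $(A\to X)$ the counit is the map $X' \to X$ under $A$, with $X' = X \times_{X_{-1}} A_{-1}$, and it is cartesian for the same reason; reducedness of $A$ is used only to identify $X'$ with an honest fibre of $X$, not for cartesianness.

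There is no genuine obstacle here; the statement is a tautology once the factorisation system has been set up, which is why it is recorded as an obvious lemma. The only points requiring a line of care are bookkeeping ones: that the counit of $\iota \dashv w$ really is the comparison map $c$ (immediate from $\iota$ being fully faithful), and, in the cosliced version, that the wide factor of $(A\to X)$ can be chosen with $A$ as domain again and compatible with the structure map out of the reduced object $A$, so that it indeed lives in $\cFD_{A/}$ --- as was already noted when the coslice adjunctions $w_A$ were introduced.
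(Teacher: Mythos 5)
Your argument is correct and is exactly the reasoning the paper has in mind: it records this as an obvious lemma with no proof, precisely because the counit of the coreflection $w$ is by construction the square whose codomain leg is the cartesian factor $Y'=Y\times_{Y_{-1}}X_{-1}\to Y$ of the wide--cartesian factorisation. Your bookkeeping remarks (full faithfulness of the inclusion, and the cosliced version over an algebraic interval $A$) are accurate and consistent with how the paper sets up $w_A$.
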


\begin{blanko}{Factorisation-interval as a comonad.}\label{I=LvwR}
  We also have the basic adjunction $i\upperstar \isleftadjointto u\upperstar $
  between complete decomposition spaces and complete flanked decomposition spaces.
  Applied to coslices over an algebraic interval $A$ and its underlying 
  decomposition space $\un A =i\upperstar A$, we get the 
  adjunction
  $$\xymatrix{
  L : \cFD_{A/} \ar@<+3pt>[r] & \ar@<+3pt>[l]\cDecomp_{\un A/} : R   .
  }$$
  Here $L$ is simply the functor $i\upperstar$,
  while the right adjoint $R$ is given by applying $u\upperstar$
  and precomposing with the unit $\eta_A$.
  Note that the unit of this adjunction $L \isleftadjointto R$ at an object
  $f:A \to X$ is given by
  $$
  \xymatrix {& A \ar[ld]_f \ar[rd]^{u\upperstar i\upperstar f \circ \eta_A} \\
  X \ar[rr]_{\eta_X} && u\upperstar i\upperstar  X
  }
  $$
  
  We now combine the two adjunctions:
  $$\xymatrix{
  \aInt_{A/} \ar@<+3pt>[r]^v & \ar@<+3pt>[l]^w \cFD_{A/} 
  \ar@<+3pt>[r]^-L
  & \ar@<+3pt>[l]^-R \cDecomp_{\un A/}  \, .
  }$$

  The factorisation-interval functor is the $\un A= \Delta[k]$ instantiation:
  $$
  I := L \circ v \circ w \circ R  .
  $$
  Indeed, this is precisely what we said in the construction, 
  just phrased more functorially.
  It follows that the factorisation-interval construction is a comonad
  on $\cDecomp_{\un A/}$.
\end{blanko}

\begin{lemma}
  The composed counit is \culf.
\end{lemma}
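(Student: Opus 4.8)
The plan is to unwind the counit of the composite comonad $I = L v w R$ on $\cDecomp_{\un A/}$ into two already-understood pieces. Since $v \dashv w$ and $L \dashv R$, we have $Lv \dashv wR$, and the counit of this composite adjunction at an object $X$ of $\cDecomp_{\un A/}$ is the composite
$$I(X) = LvwR(X) \xrightarrow{L(\beta_{RX})} LR(X) \xrightarrow{\alpha_X} X ,$$
where $\beta\colon vw \Rightarrow \id$ is the counit of the wide-cartesian coreflection and $\alpha\colon LR \Rightarrow \id$ is the counit of the basic adjunction $i\upperstar \dashv u\upperstar$ transported to the coslices. Concretely: writing $A \to B$ for the wide factor of the transpose $A \to u\upperstar X$ of the structure map $\un A \to X$, so that $I(X) = i\upperstar B$, these two pieces are exactly the two maps in the factorisation-interval construction diagram, namely $i\upperstar$ applied to the cartesian arrow $B \to u\upperstar X$, followed by the double-dec counit $\varepsilon_X = d_\top d_\bot \colon i\upperstar u\upperstar X \to X$.

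First I would check the second piece. The counit of a cosliced adjunction has the same underlying arrow as the counit of the original adjunction (a one-line verification using the triangle identities), so $\alpha_X$ has underlying map $\varepsilon_X$, which is cULF by Lemma~\ref{counit-cULF} since $X$ is a (complete) decomposition space.

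Next I would check the first piece. The counit $\beta$ of the wide-cartesian coreflection is cartesian (the obvious lemma recorded just above), so $\beta_{RX}$ is the cartesian map $B \to u\upperstar X$ of $\Xi\op$-spaces. Applying $L = i\upperstar$ keeps it cartesian by Lemma~\ref{i*cart}; and a cartesian natural transformation of simplicial spaces is in particular cartesian on all generic maps, hence cULF.

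Finally, cULF maps compose (a pasting of pullback squares is a pullback), so the composite $I(X) \to X$ is cULF. The only real point to get right is the identification of the composite comonad counit with the above two-step factorisation, together with the bookkeeping that coslicing does not disturb $\varepsilon_X$; once that is in place the statement is an immediate consequence of Lemmas~\ref{counit-cULF} and~\ref{i*cart} and the closure of cULF maps under composition.
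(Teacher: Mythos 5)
Your proof is correct and is essentially the paper's intended argument: the paper's one-line proof (``follows readily from Lemma~\ref{counit-cULF}'') compresses exactly your decomposition, already visible in the factorisation-interval diagram, of the composed counit as $i\upperstar$ of the cartesian counit of the coreflection (cULF by Lemma~\ref{i*cart}) followed by the basic counit $\epsilon_X$ (cULF by Lemma~\ref{counit-cULF}), and cULF maps compose. Your extra care in identifying the counit of the composite cosliced adjunction with this two-step factorisation is a correct and welcome spelling-out of what the paper leaves implicit.
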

\begin{proof}
  This follows readily from \ref{counit-cULF}.
\end{proof}

\end{blanko}

\begin{prop}
  The composed unit $\eta : \Id \Rightarrow w \circ R \circ L \circ v$ is an
  equivalence.
\end{prop}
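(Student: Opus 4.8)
The plan is to identify the claimed natural transformation as the unit of a composite adjunction. Since $v \dashv w$ and $L \dashv R$, we have $L\circ v \dashv w\circ R$, and the unit $\eta$ of this composite adjunction at an object $a$ of $\aInt_{A/}$ is the composite
$$
a \xrightarrow{\ \eta^{vw}_a\ } w v a \xrightarrow{\ w(\eta^{LR}_{va})\ } w R L v a ,
$$
where $\eta^{vw}\colon\Id\Rightarrow wv$ and $\eta^{LR}\colon\Id\Rightarrow RL$ are the units of $v\dashv w$ and $L\dashv R$ respectively. It therefore suffices to show that both arrows in this composite are equivalences.

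The first arrow is an equivalence because $v=v_A$ is the natural full inclusion $\aInt_{A/}\hookrightarrow\cFD_{A/}$: when the left adjoint of an adjunction is fully faithful, the unit $\Id\Rightarrow wv$ is an equivalence. For the second arrow, write $a$ as an object $f\colon A\to X$ of $\aInt_{A/}$, so that $X$ is an algebraic interval. By the explicit description of the unit of $L\dashv R$ recorded above --- and using naturality of the monad unit $\eta$ of $u\upperstar i\upperstar$ to identify $RLv(a)$, as an object of $\cFD_{A/}$, with the map $A\to u\upperstar i\upperstar X$ determined by $\eta_X\circ f$ --- the morphism $\eta^{LR}_{va}$ in $\cFD_{A/}$ is precisely the monad unit $\eta_X\colon X\to u\upperstar i\upperstar X$. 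Since $X$, being an algebraic interval, is in particular flanked, $\eta_X$ is cartesian by Lemma~\ref{unit-cart}. Finally, the coreflection $w$ sends cartesian maps to equivalences (one of the obvious lemmas recorded above: the wide factor of a cartesian map is an equivalence), so $w(\eta^{LR}_{va})$ is an equivalence. Composing the two equivalences shows that $\eta_a$ is an equivalence for every $a$, which is the claim.

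The step I expect to need the most care is the coslice bookkeeping: one must verify that $RLv(f\colon A\to X)$ is the object $A\to u\upperstar i\upperstar X$ determined by $\eta_X\circ f$, and that the unit morphism $v(a)\to RLv(a)$ is the monad unit $\eta_X$ itself --- both of which reduce to naturality of $\eta$ at $f$ --- and that the recorded statement that the coreflection $w$ sends cartesian maps to equivalences is here being applied to the cosliced coreflection $w_A\colon\cFD_{A/}\to\aInt_{A/}$, which is legitimate since it is the restriction to coslices of the unrestricted wide-cartesian coreflection. With those identifications in place, the rest is purely formal.
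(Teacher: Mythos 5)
Your proof is correct and is essentially the paper's argument in a slightly different packaging: both hinge on the explicit identification of the $L\dashv R$ unit at $v(f\colon A\to X)$ with the monad unit $\eta_X$, which is cartesian by Lemma~\ref{unit-cart}, together with the uniqueness built into the wide--cartesian factorisation system. The paper applies orthogonality directly to the triangle over $u\upperstar i\upperstar B$ in one step, whereas you split the composed unit into the unit of $v\dashv w$ (an equivalence since $v$ is fully faithful) followed by $w(\eta_X)$ (an equivalence since the coreflection $w$ sends cartesian maps to equivalences) --- same inputs, same conclusion.
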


\begin{proof}
  The result of applying the four functors to an algebraic interval map $f:A\to B$
  is the stretched factor in
    $$\xymatrix{
  A \ar[rr] \ar[rd]_{\mathrm{stretched}} && u\upperstar i\upperstar B \\
  & D \ar[ru]_{\mathrm{cart}} &
  }$$
  The unit on $f$ sits in this diagram
  $$
  \xymatrix{
  & \ar[ld]_f  A  \ar[rd] & \\
  B \ar@{-->}[rr]_{\eta_f} \ar[rd]_{\eta_B}&& D \ar[ld]\\
  & u\upperstar i\upperstar B ,}
  $$
  where $\eta_B$ is cartesian by \ref{unit-cart}.
  It follows now from 
  orthogonality of the
  stretched-cartesian factorisation system
  that $\eta_f$ is an equivalence.
\end{proof}

\begin{cor}\label{i*v:ff}
  The functor $i\upperstar \circ v : \aInt \to \cDecomp_{\Delta[1]/}$
  is fully faithful.
\end{cor}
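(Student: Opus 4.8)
The plan is to deduce the corollary immediately from the preceding Proposition, via the elementary principle that a left adjoint whose unit is an equivalence is fully faithful.

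First I would reconcile the notation. Since $\Xi[-1]$ is an initial object of $\aInt$, the coslice $\aInt_{\Xi[-1]/}$ is canonically equivalent to $\aInt$ itself (the projection from the coslice under an initial object is an equivalence). Moreover $i\upperstar \Xi[-1] = \Delta[1]$ by \eqref{eq:Deltak+2}, so for $A = \Xi[-1]$ the coslice $\cDecomp_{\un A/}$ appearing in \ref{I=LvwR} is exactly $\cDecomp_{\Delta[1]/}$. Under these identifications --- and recalling that $L$ on coslices is just $i\upperstar$, while $v$ is the full inclusion of algebraic intervals --- the functor $i\upperstar \circ v$ of the statement is precisely the composite $L\circ v \colon \aInt_{\Xi[-1]/} \to \cDecomp_{\Delta[1]/}$ of \ref{I=LvwR}.

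Now by \ref{I=LvwR} this composite $L\circ v$ is a left adjoint, with right adjoint $w\circ R$; and the preceding Proposition asserts that the unit $\Id \Rightarrow w\circ R\circ L\circ v$ of the composed adjunction is an equivalence. Since the unit of an adjunction $F\dashv G$ is an equivalence precisely when $F$ is fully faithful, it follows that $i\upperstar\circ v = L\circ v$ is fully faithful, as claimed. I do not expect any real obstacle: the substance is entirely in the Proposition (which in turn rests on orthogonality of the wide-cartesian factorisation system and cartesianness of the unit $\eta_B$ from \ref{unit-cart}); the only thing to watch is the bookkeeping identifying $\aInt$ with $\aInt_{\Xi[-1]/}$, which is harmless because $\Xi[-1]$ is initial in $\aInt$ and every morphism of algebraic intervals is wide (\ref{aInt:map=wide}), so the coslice adjunction really does collapse to ``include, then apply $i\upperstar$''.
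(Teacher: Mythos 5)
Your proof is correct and is exactly the argument the paper intends: the corollary is stated without proof precisely because it follows from the preceding Proposition by the standard fact that a left adjoint with invertible unit is fully faithful, together with the identifications $\aInt \simeq \aInt_{\Xi[-1]/}$ (initiality of $\Xi[-1]$) and $i\upperstar\Xi[-1]=\Delta[1]$. Your careful bookkeeping of these identifications matches the setup of \ref{I=LvwR}, so there is nothing to add.
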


\begin{prop}\label{prop:I(cULF)=eq}
  $I$ sends \culf maps to equivalences.  In detail, for a \culf map $F: Y\to 
  X$ and any arrow $a \in Y_1$
  we have a natural equivalence of intervals (and hence of underlying Segal spaces)
  $$
  I(a) \isopil I(Fa).
  $$
\end{prop}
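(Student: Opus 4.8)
The plan is to unwind the factorisation-interval construction of~\ref{I=LvwR} and reduce the claim to the essential uniqueness of wide-cartesian factorisations, the one substantial input being Lemma~\ref{u*f(cULF)=cart}, that $u\upperstar$ carries cULF maps to cartesian maps.

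First I would recall that, for a complete decomposition space $Y$ and an arrow $a\in Y_1$, the interval $I(a)$ is computed by transposing $\name{a}\colon\Delta[1]\to Y$ across the basic adjunction $i\upperstar\isleftadjointto u\upperstar$ to a map $\Xi[-1]\to u\upperstar Y$, taking its wide-cartesian factorisation $\Xi[-1]\to A\to u\upperstar Y$ (with the first leg wide, the second cartesian), and setting $I(a)=i\upperstar A$. The key observation is then that, since $Fa$ is the composite $\Delta[1]\xrightarrow{a}Y\xrightarrow{F}X$, its transpose is the transpose of $a$ postcomposed with $u\upperstar F$. Because $F$ is cULF, $u\upperstar F$ is cartesian by Lemma~\ref{u*f(cULF)=cart}; since the right-hand class of a factorisation system is closed under composition, the composite $A\to u\upperstar Y\xrightarrow{u\upperstar F}u\upperstar X$ is again cartesian, whereas the wide leg $\Xi[-1]\to A$ is untouched. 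Hence $\Xi[-1]\to A\to u\upperstar X$ is a wide-cartesian factorisation of the transpose of $Fa$, so by essential uniqueness of such factorisations $A$ is canonically the algebraic interval computing $I(Fa)$; applying $i\upperstar$ gives the asserted equivalence $I(a)\isopil I(Fa)$, and the clause about underlying Segal spaces is automatic since that Segal space is precisely $i\upperstar A$ (cf.~\ref{prop:i*flanked=Segal}).

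For naturality I would note that every ingredient is functorial: the wide-cartesian factorisation is realised by the coreflection $w$ of~\ref{coreflection:w} (see also~\ref{cor:ArD=EF}), and $u\upperstar$, $i\upperstar$ and adjoint transposition are functors, so the equivalence above is exactly the value of the functor $I$ (the comonad of~\ref{I=LvwR}) on the cULF morphism $F\colon(\Delta[1]\xrightarrow{a}Y)\to(\Delta[1]\xrightarrow{Fa}X)$ in $\cDecomp_{\Delta[1]/}$. The main obstacle I anticipate is not conceptual but bookkeeping: one must identify the transpose of $Fa$ correctly (via the triangle identities for $i\upperstar\isleftadjointto u\upperstar$) and phrase \emph{essentially unique factorisation} so that it yields a coherent equivalence of algebraic intervals rather than a bare equivalence --- which is precisely what the coreflection description of $w$ provides. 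The remaining facts used (composites of cartesian maps are cartesian; wide maps are preserved in the relevant composite) are formal and already recorded in the paper.
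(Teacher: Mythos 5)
Your argument is correct and is essentially the paper's own proof, just unwound: the paper says in one line that $R$ sends cULF maps to cartesian maps (your appeal to Lemma~\ref{u*f(cULF)=cart} applied to the transpose of $Fa$) and that the coreflection $w$ sends cartesian maps to equivalences (your observation that postcomposing the cartesian leg with the cartesian map $u\upperstar F$ leaves the wide factor $A$ unchanged, by essential uniqueness of wide-cartesian factorisations). Your functoriality/naturality remarks via the comonad $I=L\circ v\circ w\circ R$ of~\ref{I=LvwR} likewise match the paper's framing, so there is nothing to add.
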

\noindent
\begin{proof}
  $R$ sends \culf maps to cartesian maps, and $w$ send cartesian maps to 
  equivalences.
\end{proof}

\begin{cor}
  If $X$ is an interval, with longest arrow $a\in X_1$, then $X\simeq I(a)$.
\end{cor}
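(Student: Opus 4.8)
The plan is to deduce this from the Proposition just above --- that the composed unit $\eta\colon \Id \Rightarrow w\circ R\circ L\circ v$ is an equivalence on $\aInt_{\Xi[-1]/}$ --- together with Corollary~\ref{i*v:ff}. Since $X$ is an interval, it is equivalent to $i\upperstar C$ for an essentially unique algebraic interval $C\in\aInt$, which we regard as an object of $\aInt_{\Xi[-1]/}$ through the unique arrow $\Xi[-1]\to C$ (unique because $\Xi[-1]$ is initial in $\aInt$). Applying $L = i\upperstar$ to that arrow gives a map $\Delta[1] = i\upperstar\Xi[-1]\to i\upperstar C \simeq X$, which by the description of the factorisation-interval construction is exactly the inclusion of the two endpoints with the longest arrow between them; so, under the equivalence $X\simeq i\upperstar C$, the pointed object $(X,a)$ of $\cDecomp_{\Delta[1]/}$ is identified with $L\bigl(v(C)\bigr)$.

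Using that $I = L\circ v\circ w\circ R$ (see~\ref{I=LvwR}) I would then compute
\[
I(a)\;=\;L\,v\,w\,R\bigl(L\,v(C)\bigr)\;=\;L\,v\bigl(w\,R\,L\,v(C)\bigr)\;\simeq\;L\,v(C)\;\simeq\;X ,
\]
the middle equivalence being the Proposition on the composed unit, applied to $C$ and pushed forward along $L\circ v$. Since all the maps involved are maps of intervals, and every interval is a Segal space (Proposition~\ref{prop:i*flanked=Segal}), this gives the claimed equivalence $X\simeq I(a)$, of intervals and in particular of underlying Segal spaces.

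The one step that needs genuine care --- and hence the main obstacle --- is the identification, in the first paragraph, of the canonical arrow $\Delta[1]\to X$ arising from $\Xi[-1]\to C$ with the \emph{longest} arrow $a\in X_1$ rather than some other $1$-simplex; this has to be read off from the definition of the longest arrow (the image of the unique arrow $0\to 1$ of the interval) together with the initiality of $\Xi[-1]$. Everything after that is formal: the Proposition on the composed unit says precisely that the comonad $I$ on $\cDecomp_{\Delta[1]/}$ is idempotent, so its counit --- which is the cULF comparison map $I(a)\to X$ produced by the construction --- is an equivalence on every object in the image of $L\circ v$, i.e.\ on every interval. Alternatively, one could invoke Proposition~\ref{prop:I(cULF)=eq} for the cULF map $I(a)\to X$ and the fact (corollary after~\ref{aInt:map=wide}) that a cartesian map of algebraic intervals is an equivalence, but the computation above is the most direct.
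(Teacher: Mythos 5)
Your proof is correct and is essentially the argument the paper intends: the corollary is left as an immediate consequence of the proposition that the composed unit $\Id \Rightarrow w\circ R\circ L\circ v$ is an equivalence, applied exactly as you do to the canonical object $\Xi[-1]\to C$ with $i\upperstar C\simeq X$, the transpose $\Delta[1]\to X$ being the longest arrow by definition. (Only your closing aside is weaker than you suggest: invoking \ref{prop:I(cULF)=eq} for the cULF map $I(a)\to X$ presupposes what is to be proved, so the direct computation you give is the right one.)
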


\begin{prop}\label{aInt-faithful}
  The composed functor 
  $$
  \aInt \to \cDecomp_{\Delta[1]/} \to \cDecomp
  $$
  is faithful (i.e.~induces a monomorphism on mapping spaces).
\end{prop}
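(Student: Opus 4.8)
The plan is to notice that, after unwinding the coslice projection (which merely forgets the canonical map $\Delta[1] \to i\upperstar Q$ exhibiting the longest arrow), the composite in the statement is just the ``underlying decomposition space'' functor $Q \mapsto i\upperstar Q$, and that this factors as
\[
  \aInt \stackrel{v}{\longrightarrow} \cFD \stackrel{i\upperstar}{\longrightarrow} \cDecomp ,
\]
where $v$ is the tautological full inclusion and $i\upperstar$ is the left adjoint of the basic adjunction $i\upperstar \isleftadjointto u\upperstar$ restricted to the complete subcategories. Since $v$ is fully faithful, and a composite of faithful functors is faithful, it suffices to prove that $i\upperstar \colon \cFD \to \cDecomp$ is faithful, i.e.\ induces a monomorphism $\Map_{\cFD}(A,B) \to \Map_{\cDecomp}(i\upperstar A, i\upperstar B)$ for all $A, B \in \cFD$.

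For this I would argue entirely through the adjunction $i\upperstar \isleftadjointto u\upperstar$. It identifies $\Map_{\cDecomp}(i\upperstar A, i\upperstar B) \simeq \Map_{\cFD}(A, u\upperstar i\upperstar B)$, and under this identification the map induced by $i\upperstar$ on mapping spaces becomes post-composition with the unit $\eta_B \colon B \to u\upperstar i\upperstar B$. Because $\Map_{\cFD}(A,-)$ preserves pullbacks, post-composition with any monomorphism induces a monomorphism on mapping spaces; so it is enough to show that each unit $\eta_B$ is a monomorphism in $\cFD$. As $\cFD$ is a full subcategory of $\Fun(\Xi\op, \Grpd)$, where monomorphisms are detected levelwise, this reduces to checking that $\eta_B$ is levelwise a monomorphism of $\infty$-groupoids.

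This last point is where the hypotheses get used, but it is essentially immediate: $\eta_B$ is the composite of the two new outer degeneracy maps $s_{\bot-1}$ and $s_{\top+1}$, and objects of $\cFD$ are complete, so all their degeneracy maps --- in particular $s_{\bot-1}$ and $s_{\top+1}$ --- are monomorphisms; hence so is the composite, levelwise, hence $\eta_B$ is a monomorphism of $\Xi\op$-spaces. That gives faithfulness of $i\upperstar$, and composing with $v$ finishes the proof. I do not anticipate a real obstacle here --- the only things to be careful with are the identification of a left adjoint with ``post-composition with the unit'' on mapping spaces, and the harmless bookkeeping identifying the stated composite with $i\upperstar \circ v$. (One could instead try to proceed directly from Corollary~\ref{i*v:ff} via the coslice projection $\cDecomp_{\Delta[1]/} \to \cDecomp$, but then faithfulness requires knowing that the longest arrow of an interval sits in a contractible connected component of $(i\upperstar Q)_1$; factoring through $\cFD$ sidesteps this.)
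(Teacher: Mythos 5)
Your proof is correct, but it takes a genuinely different route from the paper's. The paper keeps the coslice category in play: it splits the map on mapping spaces as $\Map_{\aInt}(A,B) \to \Map_{\cDecomp_{\Delta[1]/}}(f,g) \to \Map_{\cDecomp}(i\upperstar A, i\upperstar B)$, quotes Corollary~\ref{i*v:ff} for the first map being an equivalence, and then shows the second map is a monomorphism via the fibre sequence for the coslice projection: the base point $\name{g}\colon 1 \to \Map_{\cDecomp}(\Delta[1], i\upperstar B)$ is identified with $s_{\bot-1}s_{\top+1}\colon B_{-1}\to B_1$, a monomorphism by completeness, so the pulled-back map upstairs is one too. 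You instead bypass the coslice and Corollary~\ref{i*v:ff} entirely, observing that the composite is just $i\upperstar$ restricted along $\aInt\subset\cFD$, and proving the stronger statement that $i\upperstar\colon \cFD\to\cDecomp$ is faithful: under the adjunction $i\upperstar\isleftadjointto u\upperstar$ the induced map on mapping spaces is post-composition with the unit $\eta_B = s_{\bot-1}s_{\top+1}$, which is a levelwise (hence genuine) monomorphism because all degeneracy maps, including the new outer ones, are monomorphisms for a complete flanked decomposition space. So both arguments ultimately rest on completeness through the very same map $s_{\bot-1}s_{\top+1}$, but the paper uses it only in degree $-1$ (where $B_{-1}\simeq *$) and leans on the previously established full faithfulness into $\cDecomp_{\Delta[1]/}$, whereas you use it in every degree and get a cleaner, more self-contained argument with a more general conclusion (faithfulness for arbitrary complete flanked codomains), at the cost of not recovering the coslice-level full faithfulness that the paper's factorisation exhibits. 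Your points of care are the right ones: the identification of the induced map with post-composition by the unit is standard, monomorphisms in $\Fun(\Xi\op,\Grpd)$ are detected levelwise, and since $\aInt$, $\cFD$, $\cDecomp$ are full subcategories all mapping spaces and the relevant pullbacks may be computed in the ambient presheaf categories, so the slight looseness about $\Map_{\cFD}(A,-)$ preserving pullbacks is harmless.
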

\begin{proof}
  Given two algebraic intervals $A$ and $B$, 
  denote by $f:\Delta[1] \to i\upperstar A$ and $g: \Delta[1] \to i\upperstar B$
  the images in $\cDecomp_{\Delta[1]/}$.
  The claim is that the map
  $$
  \Map_{\aInt}(A,B) \longrightarrow \Map_{\cDecomp_{\Delta[1]/}}(f,g) \longrightarrow
  \Map_{\cDecomp}(i\upperstar A, i\upperstar B)
  $$
  is a monomorphism.
  We already know that the first part is an equivalence (by Corollary~\ref{i*v:ff}).
  The second map will be a monomorphism because of the special nature of $f$ and $g$.
  We have a pullback diagram (mapping space fibre sequence for coslices):
  $$\xymatrix{
     \Map_{\cDecomp_{\Delta[1]/}}(f,g)\drpullback \ar[r]\ar[d] & 
     \Map_{\cDecomp}(i\upperstar A, i\upperstar B)\ar[d]^{\mathrm{precomp. }f} \\
     \terminal \ar[r]_-{\name g} & \Map_{\cDecomp}(\Delta[1], i\upperstar B).
  }$$
  Since $g:\Delta[1]\to i\upperstar B$ is the image of the canonical map 
  $\Xi[-1]\to B$,
  the map 
  $$
  \xymatrix{
  \terminal \ar[r]^-{\name g}& \Map_{\cDecomp}(\Delta[1], i\upperstar B)}
  $$
  can be identified with
  $$
  \xymatrix @C=48pt{
  B_{-1} \ar[r]^{s_{\bot-1}s_{\top+1}} & B_1 ,
  }
  $$
  which is a monomorphism since $B$ is complete.  It follows that the top map
  in the above pullback square is a monomorphism, as asserted.
  (Note the importance of completeness.)
\end{proof}

\section{The decomposition space of intervals}

\begin{blanko}{Interval category as a full subcategory in $\cDecomp$.}\label{towards-Int}
  We now invoke the general results about extension of factorisation 
  systems (Lemmas~\ref{fact-ext} and \ref{fact-Kl-E}).
  Let 
  $$
  \Int := \wtil \aInt
  $$
  denote the image of $\aInt\subset \cFD$ under the left adjoint
  $i\upperstar$ in
  the adjunction
  $$\xymatrix{
  i\upperstar  : \cFD \ar@<+3pt>[r] & \ar@<+3pt>[l]\cDecomp : u\upperstar 
  }$$
  as in~\ref{fact-Kl-E}.  Hence $\Int \subset \cDecomp$ is the full 
  subcategory of decomposition spaces underlying algebraic intervals.
  Say a map in $\Int $ is {\em stretched} if it is the $i\upperstar $ image of a 
  map in $\aInt$ (i.e.~a stretched map in $\cFD$).
\end{blanko}
  
\begin{prop}\label{fact-Int}
  The stretched maps as left-hand class and the \culf maps as right-hand class form a 
  factorisation system on $\Int$.
\end{prop}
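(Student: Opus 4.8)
The plan is to realise this factorisation system as an instance of the Kleisli-type construction of Lemma~\ref{fact-Kl-E}, applied to the basic adjunction $i\upperstar \isleftadjointto u\upperstar$ between $\cFD$ and $\cDecomp$, where $\cFD$ carries the wide-cartesian factorisation system ($E=$ wide, $F=$ cartesian) and $\A := \aInt \subset \cFD$ is the full subcategory of algebraic intervals. By construction (Subsection~\ref{towards-Int}) the restricted Kleisli category $\wtil\A$ for this data is exactly $\Int \subset \cDecomp$, so once the hypotheses are in place Lemma~\ref{fact-Kl-E} produces a factorisation system $(\wtil E,\wtil F)$ on $\Int$, and it remains only to identify $\wtil E$ with the wide maps and $\wtil F$ with the cULF maps.

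First I would check the hypotheses. The two conditions of Lemma~\ref{fact-ext} hold because $u\upperstar i\upperstar$ preserves cartesian maps (Corollary~\ref{u*i*(cart)=cart}, which restricts to $\cFD$), and because $u\upperstar\epsilon$ is cartesian: the counit $\epsilon$ is cULF (Lemma~\ref{counit-cULF}) and $u\upperstar$ carries cULF maps to cartesian maps (Lemma~\ref{u*f(cULF)=cart}). The two extra conditions of Lemma~\ref{fact-Kl-E} are that every arrow of $\aInt$ is wide (Lemma~\ref{aInt:map=wide}), and that the wide factor $A\to A'$ of any arrow $A\to B$ of $\cFD$ with $A\in\aInt$ again lies in $\aInt$. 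For the latter, $A'$ is cartesian over the flanked decomposition space $B$, hence itself a flanked decomposition space (Lemma~\ref{Flanked/cart}) which is moreover complete (anything cartesian over a complete $\Xi\op$-space is complete), and $A'$ is reduced because $A\to A'$ is wide and $A$ is reduced (Lemma~\ref{reduced/wide}); thus $A'\in\aInt$. With the hypotheses verified, Lemma~\ref{fact-Kl-E} yields a factorisation system $(\wtil E,\wtil F)$ on $\Int$ with $\wtil E$ the class of $i\upperstar$-images of arrows of $\aInt$ (closed up under equivalences) and $\wtil F = (u\upperstar)^{-1}(\text{cartesian})\cap\Int$.

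Finally I would match the two classes. Since every arrow of $\aInt$ is wide, $\wtil E$ is generated by the $i\upperstar$-images of all arrows of $\aInt$, which is precisely the definition of the wide maps in $\Int$ given in Subsection~\ref{towards-Int}; and this class already contains all equivalences of $\Int$, because an equivalence of algebraic intervals preserves initial and terminal objects, hence the canonical arrow $\Delta[1]$, and therefore lifts along the fully faithful functor $i\upperstar\circ v$ of Corollary~\ref{i*v:ff}. For the right class, a map $g\colon i\upperstar A\to i\upperstar B$ in $\Int$ is cULF if and only if its transpose $A\to u\upperstar i\upperstar B$ is cartesian (Lemma~\ref{cart&cULF}); this transpose is $(u\upperstar g)\circ\eta_A$, and since $\eta_A$ is cartesian (Lemma~\ref{unit-cart}), the pasting law for pullbacks shows it is cartesian if and only if $u\upperstar g$ is cartesian. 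Hence $\wtil F$ is exactly the class of cULF maps, as required.

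The step I expect to be the main obstacle is the second hypothesis of Lemma~\ref{fact-Kl-E} --- stability of $\aInt$ under taking wide factors --- which is not a single citation but an assembly of completeness, flanking and reducedness of the intermediate object from three separate earlier results. A secondary point requiring care is the transpose bookkeeping in the last paragraph: recognising the adjunct of $g$ as $(u\upperstar g)\circ\eta_A$ and invoking right-cancellation of cartesian natural transformations to pass between $\wtil F$ and the cULF maps.
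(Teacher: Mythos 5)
Your proposal takes essentially the same route as the paper: both verify the hypotheses of Lemma~\ref{fact-Kl-E} for the adjunction $i\upperstar \dashv u\upperstar$ with $\A=\aInt$ (via \ref{u*i*(cart)=cart}, \ref{counit-cULF} combined with \ref{u*f(cULF)=cart}, \ref{aInt:map=wide}, \ref{reduced/wide}), and then identify the left class with the wide maps and the right class with the cULF maps via Lemma~\ref{cart&cULF}. Your more explicit check that $\aInt$ is stable under taking wide factors (flanking and completeness of the intermediate object via \ref{Flanked/cart}, reducedness via \ref{reduced/wide}) is a correct elaboration of what the paper cites tersely.

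The one step that does not work as you justify it is the claimed ``right-cancellation of cartesian natural transformations'' in your last paragraph. Pullback pasting cancels the other way: from the composite square and the square of $u\upperstar g$ one deduces the square of $\eta_A$, but knowing that $(u\upperstar g)\circ\eta_A$ and $\eta_A$ are cartesian does not in general force $u\upperstar g$ to be cartesian (a cartesian transformation followed by a non-cartesian one can perfectly well compose to a cartesian one). The biconditional you need is nevertheless true, but the correct argument is asymmetric: if $u\upperstar g$ is cartesian, then so is the transpose $(u\upperstar g)\circ\eta_A$ by composing with the cartesian unit (\ref{unit-cart}), hence $g$ is cULF by Lemma~\ref{cart&cULF}; conversely, if $g$ is cULF then $u\upperstar g$ is cartesian directly by Lemma~\ref{u*f(cULF)=cart}, which you already cite elsewhere in your proof. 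With that one-line repair, your identification of $\wtil F$ with the cULF maps, and hence the whole argument, matches the paper's.
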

\begin{proof}
  The stretched-cartesian factorisation system on $\cFD$ is compatible with the
  adjunction $i\upperstar \isleftadjointto u\upperstar$ and the subcategory 
  $\Int$ precisely as required to apply the general 
  Lemma~\ref{fact-Kl-E}. Namely, we have:

  --- $u\upperstar i\upperstar$
  preserves cartesian maps by Corollary~\ref{u*i*(cart)=cart}.

  ---  $u\upperstar \varepsilon$ is cartesian by \ref{u*f(cULF)=cart}, since
  $\varepsilon$ is \culf by \ref{counit-cULF}.

  --- If $A \to B$ is stretched, $A$ an algebraic interval, then so is $B$,
  by \ref{reduced/wide}.
  
  Lemma~\ref{fact-Kl-E} now tells us that there is a
  factorisation system on $\Int$ where the left-hand class are the maps of
  the form $i\upperstar $ of a stretched map.  The right-hand class of $\Int$,
  described by 
  Lemma~\ref{fact-Kl-E} as those maps $f$ for which $u\upperstar f$ is 
  cartesian,
  is seen by Lemma~\ref{cart&cULF} to be precisely the \culf maps.
\end{proof}

As in \ref{fact-Kl-E}, we can further restrict to the image of
the category $\Xiplus \subset \aInt$ consisting of
the representables together with the terminal object $\Xi[-2]$:

\begin{lemma}\label{IntDelta}
  The restriction (as in \ref{fact-Kl-E})
  to $\Xiplus \subset \aInt$ gives $\simplexcategory\subset \Int$:
  $$\xymatrix{
\simplexcategory \ar[r]^-{\mathrm{f.f.}} & \Int \ar[r]^-{\mathrm{f.f.}} & \cDecomp 
\ar@<+3pt>[d]^{u\upperstar}\\
\Xiplus  \ar[u] \ar[r] &\aInt \ar[u] \ar[r] & \cFD ,
\ar@<+3pt>[u]^{i\upperstar }
}$$ and the
  stretched-\culf factorisation systems on $\Int$ restricts to the
  active-inert factorisation system on $\simplexcategory$.
\end{lemma}

\begin{proof}
  By construction the objects are $[-2], [-1], [0], [1],\ldots$
  and the mapping
  spaces are
  \begin{eqnarray*}
    \Map_{\Int}(\Xi[k],\Xi[n]) & \simeq & \Map_\Decomp( i\upperstar \Xi[k], i\upperstar \Xi[n])\\
     & \simeq &\Map_{\widehat\simplexcategory}(\Delta[k+2],\Delta[n+2]) \\
     & \simeq & \simplexcategory([k+2],[n+2]).
  \end{eqnarray*}
  It is clear by the explicit description of $i\upperstar $ that it takes
  the maps in $\Xiplus $ to the active maps in $\simplexcategory$.  On the other hand,
  it is clear that the \culf maps in $\simplexcategory$ are the inert maps.
\end{proof}

\begin{blanko}{Arrow $\infty$-category and restriction to $\simplexcategory$.}
  Let $\Ar^{\sker}(\Int) \subset \Ar(\Int)$ denote the full subcategory of the arrow 
  $\infty$-category spanned by the stretched maps.  Recall (from \ref{ArED}) that 
  $\Ar^{\sker}(\Int)$ is a
  cartesian fibration over $\Int$ via the domain projection.
  We now restrict this cartesian fibration to $\simplexcategory \subset \Int$ as in 
  \ref{setup-end}:
  $$\xymatrix{
     \Ar^{\sker}(\Int)_{|\simplexcategory} \drpullback \ar[r]^-{\mathrm{f.f.}}\ar[d]_{\mathrm{dom}} & 
     \Ar^{\sker}(\Int) \ar[d]^{\mathrm{dom}} \\
     \simplexcategory \ar[r]_-{\mathrm{f.f.}} & \Int
  }$$
  We put
  $$
  \mathcal U := \Ar^{\sker}(\Int)_{|\simplexcategory} .
  $$
  $\mathcal U \to \simplexcategory$ is the {\em Cartesian fibration of subdivided 
  intervals}: the objects of $\mathcal U$ are the stretched 
  interval maps $\Delta[k] \to A$, which we think of as 
  subdivided intervals.
  The arrows are
  commutative squares
  $$\xymatrix{
  \Delta[k] \ar[r] \ar[d] & \Delta[n] \ar[d] \\
  A \ar[r] & B
  }$$
  where the downwards maps are stretched, and the
  rightwards maps are in $\simplexcategory$ and in $\cDecomp$, respectively.
  (These cannot be realised in the world of $\Xi\op$-spaces, and the necessity
  of having them was the whole motivation for constructing $\Int$.)
  By \ref{ArED}, the cartesian maps are squares
  $$\xymatrix{
  \Delta[k] \ar[r] \ar[d] & \Delta[n] \ar[d] \\
  A \ar[r]_{\culfsymb} & B   .
  }$$
  Hence, cartesian lifts are performed by precomposing and then coreflecting
  (i.e.~stretched-\culf factorising and keeping only the stretched part).
  For a fixed domain $\Delta[k]$, we have (in virtue of 
  Proposition~\ref{aInt-faithful})
  $$
  \Int^{\sker}_{\Delta[k]/} \simeq \aInt_{\Xi[k-2]/} .
  $$
\end{blanko}

\begin{blanko}{The (large) decomposition space of intervals.}\label{U}
  The cartesian fibration $\mathcal U =\Ar^{\sker}(\Int)_{|\simplexcategory} \to
  \simplexcategory$ determines a right fibration, $\mathcal U^\mathrm{cart} =
  \Ar^{\sker}(\Int)_{\mid \simplexcategory}^{\mathrm{cart}} \to \simplexcategory$, and
  hence by straightening (\cite{Lurie:HTT}, Ch.2) a simplicial
  $\infty$-groupoid
  $$
  \UU:\simplexcategory\op\to\widehat{\Grpd},
  $$ 
  where $\widehat{\Grpd}$ is the very large $\infty$-category of not
  necessarily small $\infty$-groupoids.  We shall see that it is a complete
  decomposition space.

  We shall postpone the straightening as long as possible, as it
  is more convenient to work directly with the right fibration $\mathcal U^\mathrm{cart} \to
  \simplexcategory$.
  Its fibre over $[k]\in\simplexcategory$ is the $\infty$-groupoid $\UU_k$ of
  $k$-subdivided intervals.  That is, an interval $\un A$ equipped with a
  stretched map $\Delta[k] \to A$.  Note that $\UU_1$ is equivalent to the
  $\infty$-groupoid $\Int^\eq$.  Similarly, $\UU_2$ is equivalent to the
  $\infty$-groupoid of subdivided intervals, more precisely intervals with
  a stretched map from $\Delta[2]$.  Somewhat more exotic is $\UU_0$, the
  $\infty$-groupoid of intervals with a stretched map from $\Delta[0]$.  This
  means that the endpoints must coincide.  This does not imply that the
  interval is trivial.  For example, any $\infty$-category with a zero
  object provides an example of an object in $\UU_0$.
\end{blanko}

\begin{blanko}{A remark on size.}
  The fibres of the right fibration $\mathcal U^\mathrm{cart} \to \simplexcategory$ are large
  $\infty$-groupoids.  Indeed, they are all variations of $\UU_1$, the
  $\infty$-groupoid of intervals, which is of the same size as the
  $\infty$-category of simplicial spaces, which is of the same size as
  $\Grpd$.  Accordingly, the corresponding presheaf takes values in
  large $\infty$-groupoids, and $\UU$ is therefore a large decomposition space.
  These technicalities do not affect the following results, but will
  play a role from \ref{conjecture} and onwards.
\end{blanko}

Among the active maps in $\UU$, in each degree the unique map $g:\UU_r \to \UU_1$
consists in forgetting the subdivision.  The space $\UU$ also has the codomain
projection $\UU \to \Int$.  In particular we can describe the $g$-fibre over a
given interval $A$:
\begin{lemma}\label{lem:U_r_A}
  We have a pullback square
  $$\xymatrix{
      (A_r)_a \drpullback \ar[r]\ar[d] &  \UU_r \ar[d]^g \\
     {*} \ar[r]_{\name A} & \UU_1
  }$$ 
  where $a\in A_1$ denotes the longest edge.
\end{lemma}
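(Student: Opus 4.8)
The plan is to trace through the definitions of the various fibrations and identify the fibre of $g: U_r \to U_1$ over an interval $A$ directly. Recall that $U_1 \simeq \Int^{\eq}$, so an object of $U_1$ is just an interval, and the point $\name A: * \to U_1$ picks out $A$; the claim is that the fibre of the forgetting-the-subdivision map $g$ over this point is the $\infty$-groupoid $(A_r)_a$ of $r$-simplices of (the underlying Segal space of) $A$ whose long edge is the longest arrow $a \in A_1$. First I would unwind $U_r$: by construction $U_r$ is the $\infty$-groupoid of pairs $(A, \sigma)$ where $\sigma: \Delta[r] \onto A$ is a wide interval map, equivalently (by the adjunction argument of the factorisation-interval construction, and by Corollary~\ref{i*v:ff} / Proposition~\ref{aInt-faithful}) an $r$-simplex of the underlying decomposition space $i\upperstar A$, subject to the wideness condition; the map $g$ sends this pair to $A$ (equivalently to its longest edge $a \in A_1$, under $U_1 \simeq \Int^{\eq}$). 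So the fibre of $g$ over $A$ consists of the wide maps $\Delta[r] \onto A$ with codomain the fixed interval $A$.

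The key step is then to identify this fibre with $(A_r)_a$. A wide map $\Delta[r] \to i\upperstar A$ is an $r$-simplex of $i\upperstar A$ whose underlying $[1]$-simplex (the long edge $d^{\{0,r\}}$) is the longest arrow $a$ of $A$: indeed "wide" for a map from the ordinal $\Delta[r]$ (viewed as the interval $\Xi[r-2]$) means precisely that the $[-1]$-component is an equivalence, which under $i\upperstar$ translates into the condition that the two endpoints go to the two endpoints of $A$ and the long edge goes to $a$ — compare the discussion in the factorisation-interval construction, where an $r$-simplex $\sigma: \Delta[r] \to X$ with long edge $a$ yields exactly a wide map $\Delta[r] \to I(a)$, and for $X$ itself an interval $I(a) = X$. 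Thus the space of such maps is the fibre of the "long-edge" map $A_r \to A_1$ over $\name a$, which is by definition $(A_r)_a$. Assembling: the fibre of $g$ over $A$ is $(A_r)_a$, and since $U \to \Delta$ is a right fibration (so fibres are genuine pullbacks of $\infty$-groupoids), this fibre computation is exactly the asserted pullback square
$$\xymatrix{
   (A_r)_a \drpullback \ar[r]\ar[d] & U_r \ar[d]^g \\
   {*} \ar[r]_{\name A} & U_1 .
}$$

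The main obstacle I anticipate is being careful about what "wide map $\Delta[r] \onto A$" means precisely at the level of $\Int$ versus $\aInt$: one must pass through the chain $\Int^{\mathrm w}_{\Delta[r]/} \simeq \aInt_{\Xi[r-2]/}$ (noted just before \ref{U}, via Proposition~\ref{aInt-faithful}) and then recognise that, with the codomain pinned to the fixed interval $A$, a wide map from $\Xi[r-2]$ to $A$ in $\aInt$ is the same data as an $r$-simplex of $i\upperstar A$ with prescribed long edge $a$ — this uses that all maps of algebraic intervals are wide (\ref{aInt:map=wide}) and the faithfulness results, so that no extra automorphisms or identifications creep in. Once that identification is pinned down, the rest is a formal manipulation of pullbacks of right fibrations, which is routine.
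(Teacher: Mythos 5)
Your proposal is correct and follows essentially the same route as the paper: the paper also computes the fibre of $g$ over $\name A$ as the mapping space $\Map_{\mathrm{wide}}(\Delta[r],A)$ (the fibre of a coslice projection being a mapping space) and then identifies this with the full subgroupoid of $\Map(\Delta[r],A)\simeq A_r$ of simplices whose long edge is $a$, i.e.\ $(A_r)_a$. Your extra care about passing through $\Int^{\mathrm w}_{\Delta[r]/}\simeq \aInt_{\Xi[r-2]/}$ is a faithful elaboration of the same argument, not a different one.
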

\begin{proof}
  Indeed, the fibre over a coslice is the mapping space, so the pullback is
at first
$$
\Map_{\mathrm{stretched}}(\Delta[r], A)
$$
But that's the full subgroupoid inside $\Map(\Delta[r], A) \simeq A_r$
consisting of the stretched maps, but that means those whose 
restriction to the long edge is $a$.
\end{proof}

\begin{thm}\label{UcompleteDecomp}
  The simplicial space $\UU: \simplexcategory\op \to \widehat{\Grpd}$ is a (large) complete
  decomposition space.
\end{thm}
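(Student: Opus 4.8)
The plan is to establish separately the two conditions that make $U$ a complete decomposition space: that $U$ carries generic--free pushouts of $\Delta$ to pullbacks, and that $s_0\colon U_0\to U_1$ is a monomorphism. The essential tool is the explicit description of $U$ coming from its construction in \ref{U}: the fibre $U_k$ is the $\infty$-groupoid $(\Int^{\mathrm{w}}_{\Delta[k]/})^{\eq}\simeq(\aInt_{\Xi[k-2]/})^{\eq}$ of intervals equipped with a wide ($k$-subdivision) map, and a point of $U_k$ lying over an interval $A\in U_1\simeq\Int^{\eq}$ is an element of $(A_k)_a$ in the sense of Lemma~\ref{lem:U_r_A}. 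Since by Lemma~\ref{IntDelta} the generic--free factorisation system on $\Delta$ is the restriction of the wide--cULF system on $\Int$, a generic simplicial operator acts on $U$ by \emph{honest precomposition} (a composite of wide maps is wide, so no coreflection is needed), and in particular leaves the underlying interval unchanged and acts on it by its own structure maps; a free operator acts by precomposing with a cULF map of $\Int$ and then applying the wide--cULF coreflection $w$ of \ref{coreflection:w}, i.e.\ by descending to a sub-interval.

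For the decomposition-space axiom I would take a generic--free pushout $[n']=[n]\amalg_{[m]}[m']$ of $\Delta$ (Lemma~\ref{genfreepushout}) and show that the induced square of fibres
$$\xymatrix@C=30pt@R=22pt{
  U_{n'} \ar[r]\ar[d] & U_{n} \ar[d] \\
  U_{m'} \ar[r] & U_{m}
}$$
is a pullback. The two legs coming from the generic part act by precomposition, hence preserve the underlying interval, so one can argue fibrewise over $\Int^{\eq}$: the condition there becomes a filling problem for a simplex of a fixed interval $A$ against a degeneracy/inner-face inclusion, which has a contractible space of solutions precisely because $A$ is a Segal space (Proposition~\ref{prop:i*flanked=Segal}); the two free legs then reorganise this into the asserted pullback. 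The bookkeeping of how the coreflection $w$ interacts with these base changes --- which sub-intervals appear, and that the comparison maps really are equivalences --- is exactly what the restricted factorisation-system results of Section~\ref{sec:fact} supply: Corollary~\ref{cor:DxF}, Lemma~\ref{lem:Dx'Dx} and Lemma~\ref{preservespullback}. As a sanity check and a special case, observe that the double decalage $\Dec_\bot\Dec_\top U$ has as $k$-simplices $\colim_{A\in\Int^{\eq}}(A_{k+2})_a$, the total space of the ``universal interval'', and that \emph{all} of its structure maps are generic, hence fibrewise; it is therefore a bundle of Segal spaces over the groupoid $\Int^{\eq}$, and so --- by the distributive-law computation in the proof of Lemma~\ref{preservespullback} --- a Segal space. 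One could also organise the argument through Theorem~\ref{thm:decomp-dec-segal}, at the price of separately proving that the single decalages $\Dec_\bot U$ and $\Dec_\top U$ are Segal spaces and that the counits $d_\bot$ and $d_\top$ are cULF.

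For completeness I would argue as in the proof of Proposition~\ref{aInt-faithful}. The map $s_0\colon U_0\to U_1$ sends an interval $A$ with a wide map from $\Delta[0]$ to $A$ with the canonically induced wide map from $\Delta[1]$, so its fibre over a point $\name{A}$ of $U_1$ is a space of wide maps $\Delta[0]\to A$, cut out inside $A$ by the two new outer degeneracies; exactly as in \ref{aInt-faithful} this fibre is controlled by the monomorphism $s_{\bot-1}s_{\top+1}\colon A_{-1}\to A_1$. Since every algebraic interval $A$ is complete, all its degeneracy maps --- old and new --- are monomorphisms, so these fibres are $(-1)$-truncated and $s_0$ is a monomorphism.

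The main obstacle I anticipate is the heart of the second paragraph: verifying the pullback squares of the decomposition-space axiom (equivalently, the Segal property of $\Dec_\bot U$ and $\Dec_\top U$). The difficulty is that, although the generic operators act by precomposition and so the ``inner'' part of each square is handled fibrewise over $\Int^{\eq}$ using that individual intervals are Segal spaces, the free operators do \emph{not} preserve the underlying interval --- they descend to sub- and quotient-intervals through the coreflection $w$ --- so the reduction must be pushed through the pullback lemmas for the wide--cULF factorisation system rather than carried out by a naive fibrewise computation, and controlling precisely which intervals occur in each corner (and that the natural comparison maps are genuinely equivalences) is where the real work lies.
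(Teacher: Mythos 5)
Your proposal correctly identifies the framework (generic operators act on $U$ by honest precomposition, free ones by precomposition followed by the coreflection $w$) and even names the three lemmas the argument really runs on (\ref{cor:DxF}, \ref{lem:Dx'Dx}, \ref{preservespullback}), but the central verification is not actually carried out --- and you say so yourself, flagging ``controlling precisely which intervals occur in each corner'' as an unresolved obstacle rather than a step you have completed. The missing idea is a clean factorisation of the square to be checked: the image under $U$ of a generic--free pushout is the outer rectangle of three squares
$$
\Int^{\mathrm w}_{\Delta[k]/} \xrightarrow{\;j\;} \Int_{\Delta[k]/} \xrightarrow{\;f'\;} \Int_{\Delta[m]/} \xrightarrow{\;w\;} \Int^{\mathrm w}_{\Delta[m]/}
$$
(and likewise on the bottom row), with the generic precompositions as vertical maps. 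The first square is a pullback because $j$ is fully faithful, the middle one by Lemma~\ref{preservespullback} applied to the generic--free pushout, and the last one by Lemma~\ref{lem:Dx'Dx}; no further ``bookkeeping'' is needed. Your alternative mechanism --- a fibrewise filling argument over $\Int^\eq$ using that each interval is a Segal space --- is not wrong in spirit (the Segal/decomposition property of the individual intervals is what makes the generic--free square a pushout in $\Int$, which is what feeds Lemma~\ref{preservespullback}), but as you yourself observe it cannot be run naively because the free legs change the underlying interval, and you never supply the structure that replaces it. As it stands, the heart of the decomposition-space axiom is a gap.

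The completeness half is closer, but also looser than it can afford to be. The fibre of $U_0\to U_1$ over an interval $\un A$ is $\Map_{\mathrm{wide}}(\Delta[0],\un A)=\Map_{\widehat\Xi}(\Xi[-2],A)=\lim A$, the limit of the cosimplicial diagram whose coface maps are the degeneracies of $A$; one then argues that this limit maps \emph{monomorphically to the contractible space} $A_{-1}$, because all those degeneracies are monomorphisms by completeness of $A$, whence the fibre is empty or contractible. Your phrasing --- that the fibre is ``controlled by the monomorphism $s_{\bot-1}s_{\top+1}\colon A_{-1}\to A_1$'' as in \ref{aInt-faithful} --- only yields that the space of wide maps embeds into a mapping space such as $A_0$ or $A_1$, which is not $(-1)$-truncatedness of the fibre; a monomorphism into a large space proves nothing. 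You need the identification of the fibre as $\lim A$ and the injection into the contractible $A_{-1}$ (this is where reducedness, not just completeness, enters), so this step too needs to be filled in.
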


\begin{proof}
  We first show it is a decomposition space.
  We need to show that for an active-inert  pullback square in $\simplexcategory\op$,
  the image under $\UU$ is a pullback:
  $$
  \xymatrix{\UU_k \drpullback\ar[r]^{f'} \ar[d]_{g'} & \UU_m \ar[d]^g \\
  \UU_n \ar[r]_f & \UU_s}$$
  This square is the outer rectangle in
   $$
  \xymatrix{\Int_{\Delta[k]/}^{\sker} \ar[r]^j \ar[d]_{g'} 
& \Int_{\Delta[k]/} \ar[r]^{f'} \ar[d]_{g'} 
& 
\Int_{\Delta[m]/}\ar[d]^g \ar[r]^w  & \Int_{\Delta[m]/}^{\sker}\ar[d]^g \\
  \Int_{\Delta[n]/}^{\sker} \ar[r]_j & 
  \Int_{\Delta[n]/}{} \ar[r]_f & 
\Int_{\Delta[s]/}\ar[r]_w & \Int_{\Delta[s]/}^{\sker}
  }$$
  (Here we have omitted taking maximal $\infty$-groupoids, but it doesn't affect the 
  argument.)
  The first two squares consist in precomposing with the inert 
  maps $f$, $f'$.  The result will no longer be a
  stretched map, so in the middle columns we allow arbitrary maps.
  But the final step just applies the coreflection to take the stretched part.  Indeed this is how cartesian lifting goes in
  $\Ar^{\sker}(\Int)$.
  The first square is a pullback since $j$ is fully
  faithful.  The last square is a pullback since it is a special case of
  Lemma~\ref{lem:Dx'Dx}.  The main point is the second square
  which is a pullback by Lemma~\ref{preservespullback} --- this is where we use that
  the active-inert square in $\simplexcategory\op$ is a pullback.
  
  To establish that $\UU$ is complete, we need to check
  that the map $\UU_0 \to \UU_1$ is a monomorphism.  This map is just the forgetful functor
  $$
  (\Int^{\sker}_{*/})^\eq \to \Int^\eq .
  $$
  The claim is that its fibres are empty or contractible.
  The fibre over an interval $\un A= i\upperstar  A$ is
  $$
  \Map_{\mathrm{stretched}}(\terminal,\un A)  \simeq \Map_{\aInt}( \Xi[-2], A) \simeq \Map_{\widehat 
  \Xi}(\Xi[-2],A).
  $$
  Note that in spite of the notation, $\Xi[-2]$ is not a representable:
  it is the terminal object, and it is hence the colimit of all the 
  representables.  It follows that $\Map_{\widehat \Xi}(\Xi[-2],A) = \lim A$.
  This is the limit of a cosimplicial diagram
  $$
  \lim A \stackrel e \longrightarrow * \rightrightarrows A_0 \cdots
  $$
  In general the limiting map of a cosimplicial diagram 
  does not have to be a monomorphism, but in this case
  it is, as all the coface maps (these are the degeneracy maps of $A$)
  are monomorphisms by completeness of $A$, and since $A_{-1}$ is contractible.
  Since finally $e$ is a monomorphism into the contractible space
  $A_{-1}$, the limit must be empty or contractible.  Hence $\UU_0 \to \UU_1$ is a 
  monomorphism,
  and therefore $\UU$ is complete.
\end{proof}

\section{Universal property of $\UU$}

The refinements discussed in \ref{fact-decomp} now pay off to give us the
following main result.  Let $\Int\comma\cDecomp$ denote the comma $\infty$-category (as
in \ref{fact-theorem}).  It is the full subcategory in $\Ar(\cDecomp)$ spanned
by the maps whose domain is in $\Int$.  Let $\Ar^{\sker}(\Int)$ denote the full
subcategory of $\Ar(\Int)$ spanned by the stretched maps.  Recall (from \ref{ArED})
that both $\Int\comma\cDecomp$ and $\Ar^{\sker}(\Int)$ are cartesian fibrations over
$\Int$ via the domain projections, and that the inclusion $\Ar^{\sker}(\Int) \to
\Int\comma\cDecomp$ commutes with the projections (but does not preserve
cartesian arrows).

\begin{theorem}\label{Thm:I}
  The inclusion functor $\Ar^{\sker}(\Int) \into \Int\comma\cDecomp$
  has a right adjoint
  $$I:\Int\comma\cDecomp \to \Ar^{\sker}(\Int)  ,
  $$
  which takes cartesian arrows to cartesian arrows.
\end{theorem}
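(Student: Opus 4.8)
The plan is to obtain Theorem~\ref{Thm:I} as the restricted-Kleisli analogue of Theorem~\ref{fact-theorem}, the hypotheses being exactly those already verified in the proof of Proposition~\ref{fact-Int}. Recall that $\Int=\wtil\aInt$ is the restricted Kleisli category (Lemma~\ref{fact-Kl-E}) for the adjunction $i\upperstar\isleftadjointto u\upperstar$ between $\cFD$ and $\cDecomp$, with $\A=\aInt\subset\cFD$, and that under this identification the wide--cartesian factorisation system on $\cFD$ descends to the wide--cULF factorisation system on $\Int$. So $\Int\comma\cDecomp$ plays the role of $\wtil\DD\comma\CC$ and $\Ar^{\mathrm w}(\Int)$ the role of $\Ar^{\wtil E}(\wtil\DD)$ in Theorem~\ref{fact-theorem}, and I would run the same argument in this instance.

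First I would establish the factorisation property for the comma category: every map $f:i\upperstar A\to X$ in $\cDecomp$ whose domain is the underlying decomposition space of an algebraic interval $A$ factors as a \emph{wide} map in $\Int$ followed by a \emph{cULF} map, with \emph{no condition imposed on the codomain $X$}. This is the restricted version of the refinement in~\ref{fact-decomp}. One transposes $f$ across $i\upperstar\isleftadjointto u\upperstar$ to a map $A\to u\upperstar X$ in $\cFD$ and factors it as $A\onto D\rat u\upperstar X$ (wide then cartesian). Then $D$ is again an algebraic interval: it is a flanked decomposition space by Lemma~\ref{Flanked/cart} (being cartesian over $u\upperstar X$), it is complete because cartesian maps preserve completeness, and it is reduced because it receives a wide map from the reduced object $A$ (Lemma~\ref{reduced/wide}). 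Transposing back gives $i\upperstar A\to i\upperstar D\to i\upperstar u\upperstar X\xrightarrow{\epsilon}X$; the first map is wide by definition of the wide maps in $\Int$, and the composite $i\upperstar D\to X$ is cULF because, by the triangle identities, its transpose is the cartesian map $D\rat u\upperstar X$, and cartesian maps transpose to cULF maps by Lemma~\ref{cart&cULF}. Orthogonality of wide against cULF in this unrestricted-codomain situation is checked exactly as in Lemma~\ref{fact-ext}, using that $u\upperstar$ of a cULF map is cartesian (Lemma~\ref{u*f(cULF)=cart}).

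Given these factorisations, the existence of the right adjoint $I$ follows the proof of Lemma~5.2.8.18 in Lurie~\cite{Lurie:HTT}, via the dual of his Proposition~5.2.7.8, precisely as in the proof of Theorem~\ref{fact-theorem}; concretely, $I$ sends $f$ to its wide factor $i\upperstar A\onto i\upperstar D$, which is the factorisation-interval construction packaged as a functor (\ref{I=LvwR}). For the last clause --- preservation of cartesian arrows for the domain projections --- I would argue as in Theorem~\ref{fact-theorem}: the coreflection $\Ar(\cFD)\to\Ar^{\mathrm w}(\cFD)$ sends cartesian squares to cartesian squares since it is vertical for the domain fibrations, and this survives the passage to $\Int$ because $u\upperstar i\upperstar$ preserves cartesian maps (Corollary~\ref{u*i*(cart)=cart}) and $u\upperstar\epsilon$ is cartesian (Lemma~\ref{u*f(cULF)=cart} with~\ref{counit-cULF}) --- the two compatibility conditions already invoked in the proof of Proposition~\ref{fact-Int}.

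The step I expect to require the most care is the first one: making sure the wide--cULF factorisation genuinely exists when the codomain $X$ is an \emph{arbitrary} complete decomposition space, not merely one underlying an algebraic interval, since this freedom on the codomain is precisely what turns $\Ar^{\mathrm w}(\Int)\hookrightarrow\Int\comma\cDecomp$ (and hence $U$) into a classifier. Everything after that is a transcription of the abstract results of Section~\ref{sec:fact} into the present setting, with the completeness, reducedness and flankedness bookkeeping handled by Lemmas~\ref{reduced/wide} and~\ref{Flanked/cart} and the observation that cartesian maps preserve completeness.
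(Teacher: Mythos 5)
Your proposal is correct and follows essentially the same route as the paper: its proof of Theorem~\ref{Thm:I} likewise just invokes the general Theorem~\ref{fact-theorem} (together with the refinements \ref{fact-decomp} and \ref{fact-Kl-E}) for the adjunction $i\upperstar\isleftadjointto u\upperstar$ and the wide--cartesian factorisation system on $\cFD$, noting that the hypotheses were already verified in the proof of Proposition~\ref{fact-Int}, and then restricts to $\aInt\subset\cFD$. Your explicit unfolding --- the wide--cULF factorisation with arbitrary codomain via transposition, the check that the middle object is an algebraic interval, and the appeal to Lurie's Lemma~5.2.8.18 for the coreflection and preservation of cartesian arrows --- is exactly the content the paper delegates to those cited results.
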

\begin{proof}
  We have already checked, in the proof of \ref{fact-Int},
  that the conditions of the general Theorem~\ref{fact-theorem}
  are satisfied by the adjunction $i\upperstar \isleftadjointto u\upperstar $
  and
  the stretched-cartesian factorisation system on $\cFD$.  It remains to restrict
  this adjunction to the full subcategory 
  $\aInt \subset \cFD$.
\end{proof}
Note that over an interval $\un A$, the adjunction restricts to the adjunction
of \ref{I=LvwR} as follows:
$$\xymatrix{
   \Int^{\sker}_{\un A/} \ar@<+2.5pt>[r]\ar[d]_\simeq
   & \ar@<+2.5pt>[l]^-I \cDecomp_{\un A/} \ar@<+2.5pt>[d]^R \\
   \aInt_{A/} \ar@<+2.5pt>[r]^v &\ar@<+2.5pt>[l]^w \cFD_{A/} \ar@<+2.5pt>[u]^L
}$$ 

\bigskip

We now restrict these cartesian fibrations further to $\simplexcategory \subset \Int$.
We call the coreflection $I$, as it is the factorisation-interval construction:
$$\xymatrix{
\mathcal U=\Ar^{\sker}(\Int)_{|\simplexcategory}\drto_{\mathrm{dom}}
\ar@<+3pt>[rr] && \ar@<+3pt>[ll]^I
\simplexcategory\comma\cDecomp\dlto^{\mathrm{dom}}\\
&\simplexcategory
}
$$

The coreflection 
$$
I : \simplexcategory\comma\cDecomp  \to \mathcal U
$$
is a morphism of cartesian fibrations over $\simplexcategory$ (i.e.~preserves
cartesian arrows).  Hence it induces a morphism of right fibrations
$
I : (\simplexcategory\comma\cDecomp)^\mathrm{cart}  \to  \mathcal U^\mathrm{cart}.
$
\begin{theorem}\label{thm:IU=cULF}
  The morphism of right fibrations
  $$
  I : (\simplexcategory\comma\cDecomp)^\mathrm{cart}  \to  \mathcal U^\mathrm{cart}
  $$ 
  is \culf.
\end{theorem}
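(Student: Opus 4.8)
The plan is to reduce the statement to a property of the underlying factorisation systems, using the comparison between the two cartesian fibrations over $\Delta$ already set up. Recall that $U=\mathcal U^{\mathrm{cart}}$ with $\mathcal U = \Ar^{\mathrm w}(\Int)_{|\Delta}$, and that $I:(\Delta\comma\cDecomp)^{\mathrm{cart}}\to U$ is induced by the coreflection $I:\Delta\comma\cDecomp\to\mathcal U$, which takes an arrow $\Delta[k]\to X$ (with $X$ a complete decomposition space) to its wide factor $\Delta[k]\onto A$ in the wide-cULF factorisation system on $\Int$, as established in \ref{fact-Int} and Theorem~\ref{Thm:I}. To show $I$ is cULF, I would unwind what cULF means for this map of simplicial (large) $\infty$-groupoids: it is cartesian on all generic maps, i.e.\ on codegeneracies and inner cofaces. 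Since both $(\Delta\comma\cDecomp)^{\mathrm{cart}}$ and $U$ are right fibrations over $\Delta$, a square lying over a generic map $[m]\to[n]$ in $\Delta$ is a pullback precisely when the induced map on fibres over the relevant objects is an equivalence after the appropriate base change; concretely, one must check that for a generic map $\phi:[m]\to[n]$, the naturality square relating the fibre $(\Delta\comma\cDecomp)^{\mathrm{cart}}_{[n]}$ to $(\Delta\comma\cDecomp)^{\mathrm{cart}}_{[m]}$ maps to the corresponding square for $U$ via a pullback.

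The key computation is the identification of fibres. Over $[k]\in\Delta$, the fibre of $(\Delta\comma\cDecomp)^{\mathrm{cart}}\to\Delta$ is $(\cDecomp_{\Delta[k]/})^{\eq}$, while the fibre of $U\to\Delta$ is $U_k=(\Int^{\mathrm w}_{\Delta[k]/})^{\eq}\simeq(\aInt_{\Xi[k-2]/})^{\eq}$, using \ref{aInt-faithful} and the discussion preceding \ref{U}. Under these identifications, the map $I$ on fibres is precisely the coreflection $w\circ R$ restricted to maximal sub-$\infty$-groupoids: send a map $\Delta[k]\to X$ to its interval wide-factor. Now a generic map $\phi:[m]\to[n]$ in $\Delta$ corresponds on the $\Xi$ side to a \emph{wide} map $\Xi[m-2]\to\Xi[n-2]$ (this is exactly the content of Lemma~\ref{IntDelta}: generic maps in $\Delta$ correspond to the maps in $\Xi{+}$, which are all wide by \ref{aInt:map=wide}). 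So what must be shown is that precomposing an interval coslice with a wide map is compatible with the interval-coreflection in the sense of a pullback of $\infty$-groupoids: the square
$$\xymatrix{
\cDecomp_{\Delta[n]/} \ar[r]^{\phi\uppershriek}\ar[d]_{w R} & \cDecomp_{\Delta[m]/}\ar[d]^{w R}\\
\Int^{\mathrm w}_{\Delta[n]/}\ar[r]_{\phi\uppershriek} & \Int^{\mathrm w}_{\Delta[m]/}
}$$
is a pullback. But this is an instance of Lemma~\ref{lem:Dx'Dx} (in its restricted form, \ref{setup-end}): $\phi$ corresponds to an $E$-arrow (a wide map) between $\aInt$-objects, $w$ is precisely the coreflection onto the left class, and precomposition with a wide map does not change the cartesian ($=$ cULF) factor. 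This is exactly the same mechanism already invoked in the proof of Theorem~\ref{UcompleteDecomp} for the ``last square'' there.

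The main obstacle — and the step requiring genuine care — is bookkeeping the passage through the basic adjunction $i\upperstar\dashv u\upperstar$ and the restricted Kleisli description of $\Int$, so that Lemma~\ref{lem:Dx'Dx}, which is stated on $\DD$ with its abstract factorisation system $(E,F)$, applies verbatim with $\DD=\cFD$, $E=$ wide, $F=$ cartesian, $\A=\aInt$, and with the relevant $E$-arrow being $\Xi[m-2]\to\Xi[n-2]$ coming from the generic map $\phi$. One must verify that the identification $\Int^{\mathrm w}_{\Delta[k]/}\simeq\aInt_{\Xi[k-2]/}$ is natural in generic maps of $[k]$ (which follows from naturality of \ref{i*v:ff} and \ref{aInt-faithful}), and that under this identification $R$ corresponds to the inclusion $v$ composed with the monad unit as in \ref{I=LvwR}, so that $wR$ becomes $w\circ v\circ(\text{unit})$, whose behaviour under $\phi\uppershriek$ is governed by the lemma. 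Once this dictionary is in place the pullback is immediate and $I$ is cULF; no further computation is needed.
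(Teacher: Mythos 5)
Your overall reduction is the same as the paper's: cULFness comes down to showing that, for the generic map $g:\Delta[1]\to\Delta[k]$, the square with horizontal maps ``precompose with $g$'' on the coslice groupoids $(\cDecomp_{\Delta[k]/})^\eq\to(\cDecomp_{\Delta[1]/})^\eq$ and $({\Int^{\mathrm w}_{\Delta[k]/}})^\eq\to({\Int^{\mathrm w}_{\Delta[1]/}})^\eq$, and vertical maps the coreflections, is a pullback. The gap is in your justification of that pullback. Lemma~\ref{lem:Dx'Dx}, even in the restricted form of \ref{setup-end}, lives entirely inside one $\infty$-category $\DD$ carrying an honest factorisation system; with your dictionary $\DD=\cFD$, $E=$ wide, $F=$ cartesian, $\A=\aInt$, it produces a pullback whose top row is $\cFD_{\Xi[k-2]/}\to\cFD_{\Xi[-1]/}$ --- not the row $\cDecomp_{\Delta[k]/}\to\cDecomp_{\Delta[1]/}$ that actually occurs as fibres of $(\Delta\comma\cDecomp)^{\mathrm{cart}}\to\Delta$. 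An object $\Delta[k]\to X$ there has an arbitrary complete decomposition space as codomain; it corresponds under the adjunction to a map $\Xi[k-2]\to u\upperstar X$, but $\cDecomp_{\Delta[k]/}$ is not a coslice of $\cFD$ (codomains of the form $u\upperstar X$ are special, and the relevant mapping spaces differ), so the lemma does not apply ``verbatim''. Note also that your identification of $R$ with ``$v$ composed with the monad unit'' is garbled: $v$ is the inclusion $\aInt_{A/}\to\cFD_{A/}$, whereas $R$ applies $u\upperstar$ and precomposes with $\eta_A$. The vertical maps of the needed square are the composites $w\circ R$, and Lemma~\ref{lem:Dx'Dx} only controls the $w$-part; by contrast, in the proof of Theorem~\ref{UcompleteDecomp} the lemma is legitimately applied because there $\DD=\Int$ and all four corners are coslices of $\Int$.

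The missing piece is exactly where the paper's proof does its work: it computes the fibres of the two horizontal maps over a point $a:\Delta[1]\to X$, obtaining $\Map_{\cDecomp_{\Delta[1]/}}(g,a)$ and $\Map_{\Int^{\mathrm w}_{\Delta[1]/}}(g,I_1(a))$, and identifies them by the adjunction of Theorem~\ref{Thm:I} --- i.e.\ by the coreflection for the comma category $\Int\comma\cDecomp$ coming from the extended factorisation of \ref{fact-decomp}, not by Lemma~\ref{lem:Dx'Dx}. Your route can be repaired: split the vertical map as $R$ followed by $w$, prove that the $R$-square is a pullback by the same kind of fibre computation using the basic adjunction $i\upperstar\dashv u\upperstar$ (the fibre of the top row over $a$ is $\Map_{\cDecomp_{\Delta[1]/}}(g,a)$, that of the middle row over $R_1(a)$ is $\Map_{\cFD_{\Xi[-1]/}}(e,R_1(a))$, and these agree by adjunction), and then paste with the Lemma~\ref{lem:Dx'Dx} square for $w$. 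But as written, ``the pullback is immediate'' conceals precisely this adjunction argument, which is the substantive content of the theorem.
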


\begin{proof}
  We need to establish that,
  for $g: \Delta[k] \to \Delta[1]$ the unique active map in degree $k$,
  the following square is a pullback:
  $$\xymatrix{
     {\cDecomp_{\Delta[k]/}}^\eq \ar[r]^{\mathrm{pre. } g}\ar[d]_{I_k} & 
     {\cDecomp_{\Delta[1]/}}^\eq  \ar[d]^{I_1} \\
     {\Int^{\sker}_{\Delta[k]/}}^\eq \ar[r]_{\mathrm{pre. }g} & 
	 {\Int^{\sker}_{\Delta[1]/}}^\eq .
  }$$
  Here the functors $I_1$ and $I_k$ are the coreflections of Theorem~\ref{Thm:I}.
  We compute the fibres of the horizontal maps over a point $a: \Delta[1]\to 
  X$.  For the first row, the fibre is
  $$
  \Map_{\cDecomp_{\Delta[1]/}}(g,a) .
  $$
  For the second row, the fibre is
  $$
  \Map_{\Int^{\sker}_{\Delta[1]/}}(g, I_1(a)). 
  $$
  But these two spaces are equivalent by the adjunction of 
  Theorem~\ref{Thm:I}.
\end{proof}

Inside $\simplexcategory\comma\cDecomp$, we have the fibre over $X$, for the codomain
fibration (which is a cocartesian fibration).  This fibre is just 
$\simplexcategory_{/X}$,
the Grothendieck construction of the presheaf $X$.
This fibre clearly includes into the
cartesian part of $\simplexcategory\comma\cDecomp$.  

\begin{lemma}
  The associated morphism of right fibrations
  $$
  \simplexcategory_{/X} \to (\simplexcategory\comma\cDecomp)^\mathrm{cart}
  $$
  is \culf.
\end{lemma}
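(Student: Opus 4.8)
The plan is to realise the inclusion $\Delta_{/X} \to (\Delta\comma\cDecomp)^{\mathrm{cart}}$ as the fibre over a vertex of a simplicial map whose target is a \emph{constant} simplicial $\infty$-groupoid, and then to observe that such a fibre inclusion is automatically cartesian on \emph{every} simplicial operator --- in particular on the generic ones, which is exactly what cULF demands.

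First I would unwind the straightenings. As a right fibration over $\Delta$, $(\Delta\comma\cDecomp)^{\mathrm{cart}}$ straightens to the simplicial $\infty$-groupoid $[k]\mapsto (\cDecomp_{\Delta[k]/})^\eq$, where an operator $\alpha\colon[m]\to[k]$ acts by precomposition with $\Delta[\alpha]$ --- indeed the cartesian lift of an arrow $\Delta[k]\to Y$ along $\alpha$ is simply $\Delta[m]\to\Delta[k]\to Y$ (first map $\Delta[\alpha]$), with codomain unchanged. Likewise $\Delta_{/X}$ straightens to the presheaf $X$ itself, $[k]\mapsto X_k=\Map_{\cDecomp}(\Delta[k],X)$, and under these identifications the morphism in question is, level by level, the inclusion of $X_k$ into $(\cDecomp_{\Delta[k]/})^\eq$ sending a simplex $\sigma$ to the coslice object it names.

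Next I would observe that the codomain assignment gives a simplicial map
$$
c\colon (\Delta\comma\cDecomp)^{\mathrm{cart}} \longrightarrow \underline{(\cDecomp)^\eq}
$$
into the constant simplicial $\infty$-groupoid with value $(\cDecomp)^\eq$: precomposing an arrow $\Delta[k]\to Y$ with some $\Delta[\alpha]$ does not change its codomain $Y$, so $c$ is natural in $[k]$; and since a cartesian arrow of $\Delta\comma\cDecomp$ has invertible codomain component, $c$ takes values in the maximal sub-$\infty$-groupoid. By inspection the fibre of $c$ over the vertex $\name{X}\colon \ast\to(\cDecomp)^\eq$ is, at level $k$, the space of arrows $\Delta[k]\to X$, i.e.~$X_k$, with the simplicial structure of $X$; that is, $\Delta_{/X}\simeq (\Delta\comma\cDecomp)^{\mathrm{cart}}\times_{\underline{(\cDecomp)^\eq}}\ast$, and the morphism of the Lemma is precisely this fibre inclusion. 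Checking these last two identifications carefully --- that $c$ is genuinely simplicial with constant target, and that its fibre over $X$ reproduces $\Delta_{/X}$ on the nose --- is the only real content of the argument; everything else is formal.

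Finally I would conclude. Fix a simplicial operator $\alpha\colon[m]\to[k]$. Since the base $\underline{(\cDecomp)^\eq}$ is constant, $\alpha$ acts on it as the identity, so the naturality square of $c$ for $\alpha$ says that the structure map $((\Delta\comma\cDecomp)^{\mathrm{cart}})_k\to(\cDecomp)^\eq$ factors as the composite $((\Delta\comma\cDecomp)^{\mathrm{cart}})_k\to((\Delta\comma\cDecomp)^{\mathrm{cart}})_m\to(\cDecomp)^\eq$ (first map $\alpha^\ast$). Forming the fibre over $X$ and applying the pasting law for pullbacks, the naturality square
$$\xymatrix{
(\Delta_{/X})_k \ar[r]^-{\alpha^\ast}\ar[d] & (\Delta_{/X})_m \ar[d]\\
((\Delta\comma\cDecomp)^{\mathrm{cart}})_k \ar[r]^-{\alpha^\ast} & ((\Delta\comma\cDecomp)^{\mathrm{cart}})_m
}$$
is a pullback. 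Letting $\alpha$ range over the generic maps shows that the morphism of right fibrations is cULF; in fact it is cartesian on all simplicial operators. (Together with Theorem~\ref{thm:IU=cULF} this yields in particular that the classifying map $X\to U$ is cULF.)
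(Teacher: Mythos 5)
Your proof is correct and is essentially the paper's own argument in a slightly different packaging: the paper also identifies the fibres of the coslice spaces $(\cDecomp_{\Delta[k]/})^\eq$ over a point of $\cDecomp^\eq$ with the mapping spaces $\Map(\Delta[k],X)=X_k$ and deduces the required pullback square from the pasting law applied to the codomain projection, exactly as in your ``fibre over a constant base'' formulation. The only cosmetic difference is that you verify cartesianness for all simplicial operators while the paper writes out the diagram only for the generic map $\Delta[1]\to\Delta[k]$, which is all cULF requires.
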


\begin{proof}
  For $g: \Delta[k] \to \Delta[1]$ the unique active map in degree $k$,
  consider the diagram
  $$\xymatrix{
   \Map(\Delta[k],X) \drpullback \ar[r]^-{\mathrm{pre. } g}\ar[d] & 
   \Map(\Delta[1],X)\drpullback \ar[d] 
   \ar[r] & \terminal \ar[d]^{\name X}\\
   {\cDecomp_{\Delta[k]/}}^\eq \ar[r]_-{\mathrm{pre. } g}& 
   {\cDecomp_{\Delta[1]/}}^\eq \ar[r]_-{\mathrm{codom}} & \cDecomp^\eq .}
   $$
  The right-hand square and the outer rectangle are obviously pullbacks, 
  as the fibres of coslices are the mapping spaces.  Hence the left-hand
  square is a pullback, which is precisely to say that the vertical map is
  \culf.  
\end{proof}

So altogether we have \culf map
$$
\simplexcategory_{/X} \to (\simplexcategory\comma\cDecomp)^\mathrm{cart} \to \mathcal U^\mathrm{cart},
$$
or, by straightening, a \culf map of complete decomposition spaces
$$
I: X \to \UU ,
$$
the {\em classifying map}.
It takes a $k$-simplex in $X$ to a $k$-subdivided interval,
as already detailed in Section~\ref{sec:fact-int}.

\medskip

The following conjecture expresses the idea that $\UU$ should be terminal in the 
$\infty$-category of complete decomposition spaces and \culf maps,
but since $\UU$ is large this cannot literally be true,
and we have to formulate it slightly differently.
\begin{blanko}{Conjecture.}\label{conjecture}
  {\em $\UU$ is the universal complete decomposition space for \culf maps.
  That is, for each (legitimate) complete decomposition space $X$, the space 
  $\Map_{\cDecomp^{\culfsymb}}(X,\UU)$ is contractible.
}
\end{blanko}
At the moment we are only able to prove the following weaker statement.

\begin{theorem}\label{thm:connected}
  For each (legitimate) complete decomposition space $X$, the space 
  $\Map_{\cDecomp^{\culfsymb}}
(X,\UU)$ is connected.
\end{theorem}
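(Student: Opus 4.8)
The plan is to show that any two cULF maps $F_0, F_1 : X \to U$ are connected by a path in $\Map_{\cDecomp^{\culf}}(X,U)$, and the natural candidate for a canonical point to connect everything to is the classifying map $I : X \to U$ constructed above. So it suffices to produce, for an arbitrary cULF map $F : X \to U$, an equivalence $F \simeq I$ inside the mapping space of cULF maps. First I would unwind what a cULF map $F : X \to U$ is: by the right-fibration description of $U$, it is equivalently a cULF morphism of right fibrations $\Delta_{/X} \to U$ over $\Delta$, i.e.\ a rule assigning to each $k$-simplex $\sigma \in X_k$ a $k$-subdivided interval, compatibly with generic maps (the cULF condition makes the generic squares pullbacks) and naturally in free maps. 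In particular, evaluating at $k=1$ and restricting along the unique generic map $[1]\to[k]$, such an $F$ assigns to each arrow $a \in X_1$ an interval $F(a) \in \Int^{\eq} \simeq U_1$, and to each $k$-simplex with long edge $a$ a wide map $\Delta[k] \onto F(a)$; the cULF/pullback conditions say precisely that the $X_k$ with long edge $a$ are the wide $k$-subdivisions of $F(a)$, i.e.\ $(F(a))_k \simeq (A_k)_a$ in the notation of Lemma~\ref{lem:U_r_A}.

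Next I would argue that this data forces $F(a) \simeq I(a)$ naturally. The key input is the universal property of the factorisation-interval construction: by Theorem~\ref{Thm:I} (and \ref{I=LvwR}), $I(a)$ is the value at $a$ of the coreflection $I : \Delta\comma\cDecomp \to \mathcal U$, hence it is terminal among wide interval maps $\Delta[1]\onto A$ equipped with a cULF map of the long edge to $a \in X_1$ — more precisely, for any interval $A$ with a compatible map, there is an essentially unique factorisation through $I(a)$. A cULF map $F : X \to U$ restricted over the arrow $a$ gives exactly such an interval together with the compatibility (cULF-ness translates, via Lemma~\ref{cart&cULF} and \ref{counit-cULF}, into the cULF condition on $I(a)\to X$), so the universal property produces a canonical comparison $F(a) \to I(a)$; the fact that both sides have the same wide $k$-subdivisions for all $k$ (namely $(A_k)_a$) upgrades this comparison to an equivalence. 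Assembling these equivalences over all $a$, and checking naturality in the free maps of $\Delta$ — which is automatic because both $F$ and $I$ are morphisms of right fibrations over $\Delta$ and the comparison was built from a universal property — yields an equivalence $F \xrightarrow{\ \sim\ } I$ in $\Map_{\cDecomp^{\culf}}(X,U)$.

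Finally, since every $F$ is connected to the single point $I$, the space $\Map_{\cDecomp^{\culf}}(X,U)$ is connected. I expect the main obstacle to be the bookkeeping that makes the pointwise comparisons $F(a)\simeq I(a)$ into a coherent map of right fibrations, rather than just an equivalence on each fibre: one must verify that the universal-property comparisons are compatible with the cartesian (generic) lifts in $U$ and with restriction along free maps, so that they glue to an actual $1$-simplex in the mapping space of cULF maps. This is where the $\infty$-categorical naturality has to be handled with care — most cleanly by phrasing the whole argument at the level of the coreflection adjunction of Theorem~\ref{Thm:I}, so that the comparison $F \Rightarrow I$ is literally the adjunction unit/counit rather than something assembled by hand. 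The reason this argument stops short of the full Conjecture~\ref{conjecture} is that connectedness only needs the existence of such an equivalence, whereas contractibility would additionally require controlling the automorphisms, i.e.\ showing the comparison is unique up to a contractible space of choices, which the present universal-property argument does not obviously deliver uniformly in $X$.
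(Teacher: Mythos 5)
The central step of your argument has a genuine gap. You want to produce the comparison $F(a)\to I(a)$ from the universal property of the coreflection of Theorem~\ref{Thm:I}. But that universal property identifies maps in $\Ar^{\mathrm w}(\Int)$ (or $\mathcal U$) \emph{into} $I(a)$ with maps in $\Delta\comma\cDecomp$ into the object $a:\Delta[1]\to X$; so to get a canonical map $F(a)\to I(a)$ you would need, as input, a map of decomposition spaces $F(a)\to X$ under $\Delta[1]$ carrying the long edge to $a$. An abstract cULF map $F:X\to U$ supplies no such datum: it assigns to each simplex of $X$ a subdivided interval, but these intervals come with no morphism back to $X$ — only the canonical classifying map $I$ has that feature, by construction, and your appeal to Lemmas~\ref{cart&cULF} and~\ref{counit-cULF} concerns $I(a)\to X$, not $F(a)$. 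The same missing datum blocks your proposed cleanup of obtaining $F\Rightarrow I$ from the unit/counit of the adjunction, since $F$ is not presented as lying in the image of the coreflection. Indeed, if such comparison maps existed canonically and naturally in $a$ and $X$, one would expect to obtain contractibility, i.e.\ the full Conjecture~\ref{conjecture}; that the paper only proves connectedness is a warning that this mechanism is not available.

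What does work is contained in the first half of your own analysis, and is the paper's route: for \emph{any} cULF $J:X\to U$, cULFness over the generic map $g:\Delta[1]\to\Delta[k]$ gives a pullback identifying the fibre of $X_k\to X_1$ over $a$ (that is, $\Map_{\cDecomp_{\Delta[1]/}}(g,a)$) with $\Map_{\Int^{\mathrm w}}(\Delta[k],J_1(a))$, naturally in $k$. Since the left-hand side depends only on $X$ and $a$, not on $J$, any two cULF maps $J,J'$ satisfy $\Map_{\Int^{\mathrm w}}(\Delta[k],J_1(a))\simeq\Map_{\Int^{\mathrm w}}(\Delta[k],J'_1(a))$ naturally in $k$, so $J_1(a)$ and $J'_1(a)$ are levelwise equivalent; one then concludes because a cULF map to $U$ is determined by its $1$-component, and in particular every $J$ is equivalent to the canonical $I$. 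No comparison map $F(a)\to I(a)$ through a universal property is constructed anywhere — avoiding exactly the step where your proposal breaks down.
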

\begin{proof}
  Suppose $J: X\to \UU$ and $J': X \to \UU$ are two \culf functors.  
  As in the proof of Theorem~\ref{thm:IU=cULF}, \culf{}ness is equivalent to saying
  that we have a pullback
    $$\xymatrix{
     \Map_{\cDecomp}(\Delta[k],X) \drpullback \ar[r]^{\mathrm{pre. } g}\ar[d]_{J_k} & 
     \Map_{\cDecomp}(\Delta[1],X) \ar[d]^{J_1} \\
     {\Int^{\sker}_{\Delta[k]/}}^\eq \ar[r]_{\mathrm{pre. }g} & 
     {\Int^{\sker}_{\Delta[1]/}}^\eq  .
  }$$
  We therefore have equivalences between the fibres over
  a point $a: \Delta[1]\to 
  X$:
  $$
  \Map_{\cDecomp_{\Delta[1]/}}(g,a) \simeq
   \Map_{\Int^{\sker}_{\Delta[1]/}}(g, J_1(a)).
   $$
  But the second space is equivalent to $\Map_{\Int^{\sker}}(\Delta[k],J_1(a))$.
  Since these equivalences hold also for $J'$, we get
  $$
  \Map_{\Int^{\sker}}(\Delta[k],J_1(a)) \simeq 
  \Map_{\Int^{\sker}}(\Delta[k],J'_1(a)),
  $$
  naturally in $k$.  This is to say that $J_1(a)$ and $J_1'(a)$ are
  levelwise equivalent simplicial spaces.  But a \culf map is determined by its
  $1$-component, so $J$ and $J'$ are equivalent in the functor $\infty$-category.  In
  particular, every object in $\Map^{\culfsymb}(X,\UU)$ is equivalent to the canonical $I$
  constructed in the previous theorems. 
\end{proof}

\begin{blanko}{Size issues and cardinal bounds.}\label{kappa}
  We have observed that the decomposition space of intervals is large, in the
  sense that it takes values in the very large $\infty$-category of large
  $\infty$-groupoids.  This size issue prevents $\UU$ from being a terminal object in
  the $\infty$-category of decomposition spaces and \culf maps.

  A more refined analysis of the situation is possible by standard techniques,
  by imposing cardinal bounds, as we briefly explain.
  For $\kappa$ a regular uncountable cardinal, say that a simplicial space
  $X:\simplexcategory\op\to\Grpd$ is {\em $\kappa$-bounded}, when for each $n\in\simplexcategory$
  the space $X_n$ is $\kappa$-compact.  In other words, $X$ takes values in
  the (essentially small) $\infty$-category
  $\Grpd^\kappa$ of $\kappa$-compact $\infty$-groupoids.  Hence the $\infty$-category
  of $\kappa$-bounded simplicial spaces is essentially small.  The attribute
  $\kappa$-bounded now also applies to decomposition spaces and intervals.
  Hence the $\infty$-categories of $\kappa$-bounded decomposition spaces and
  $\kappa$-bounded intervals are essentially small.  Carrying the $\kappa$-bound
  through all the constructions, we see that there is an essentially small
  $\infty$-category $\UU_1$ of $\kappa$-bounded intervals, and a legitimate
  presheaf $\UU^\kappa: \simplexcategory\op\to\Grpd$ of $\kappa$-bounded intervals.

  It is clear that if $X$ is a $\kappa$-bounded decomposition space, then
  all its intervals are $\kappa$-bounded too.  It follows that if
  Conjecture~\ref{conjecture} is true then it is also true that $\UU^\kappa$,
  the (legitimate) decomposition space of all $\kappa$-bounded intervals,
  is universal for $\kappa$-bounded decomposition spaces, in the sense that
  for any $\kappa$-bounded decomposition space $X$, the space
  $\Map_{\cDecomp^{\culfsymb}}(X,\UU^\kappa)$ is contractible.
\end{blanko}

\section{\M intervals and the universal \M function}
\label{sec:MI}

We finally impose the \M condition.

\begin{blanko}{Nondegeneracy.}
  Recall from \cite[2.12]{GKT:DSIAMI-2}
  that for a complete decomposition space $X$ we have
  $$
  \nondeg X_r \subset X_r
  $$
  the full subgroupoid of $r$-simplices none of whose principal edges are 
  degenerate.   These can also be described as the full subgroupoid
  $$
  \nondeg X_r \simeq \Map_{\operatorname{nondegen}}(\Delta[r],X) \subset
  \Map(\Delta[r],X)\simeq X_r
  $$
  consisting of the nondegenerate maps.

  Now assume that $A$ is an interval.  Inside
  $$
  \Map_{\operatorname{nondegen}}(\Delta[r],A) \simeq \nondeg A_r
  $$
  we can further require the maps to be stretched.  It is clear that this 
  corresponds to considering only nondegenerate simplices whose
  longest edge is the longest
    edge $a \in A_1$:
  \begin{lemma}\label{wide+nondegA}
    $$
    \Map_{\operatorname{stretched+nondegen}}(\Delta[r],A) \simeq (\nondeg A_r)_a .
    $$
  \end{lemma}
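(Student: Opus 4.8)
The plan is to realise $\Map_{\operatorname{wide+nondegen}}(\Delta[r],A)$ as the intersection, inside $\Map(\Delta[r],A)\simeq A_r$, of the two full subgroupoids cut out separately by the conditions \emph{wide} and \emph{nondegenerate}, and then to compute that intersection as a fibre product. By the very definition of $\nondeg A_r$ recalled just above, the nondegenerate maps form the full subgroupoid $\Map_{\operatorname{nondegen}}(\Delta[r],A)\simeq \nondeg A_r\subset A_r$, so the only thing left to pin down is the subgroupoid of wide maps. For this I would invoke the computation already carried out in the proof of Lemma~\ref{lem:U_r_A}: a map $\Delta[r]\to A$ is wide precisely when its restriction to the long edge $\Delta[1]\to\Delta[r]$ (the edge $0\to r$) is the longest edge $a\in A_1$; equivalently, $\Map_{\mathrm{wide}}(\Delta[r],A)$ is the fibre of the long-edge face map $A_r\to A_1$ over $\name a\colon{*}\to A_1$, i.e.\ $(A_r)_a$.

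Combining the two descriptions, the subgroupoid of $r$-simplices that are simultaneously wide and nondegenerate is the pullback
$$
\Map_{\operatorname{wide+nondegen}}(\Delta[r],A)\;\simeq\;\nondeg A_r\times_{A_r}(A_r)_a\;\simeq\;\nondeg A_r\times_{A_1}{*},
$$
where in the last term $\nondeg A_r\to A_1$ denotes the composite of the inclusion $\nondeg A_r\hookrightarrow A_r$ with the long-edge face map. By definition this last pullback is $(\nondeg A_r)_a$, which is exactly the claimed equivalence.

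I do not expect a genuine obstacle here: the statement is essentially a definition-chase, glueing the defining property of $\nondeg A_r$ to the identification of wideness with a fibre condition. The one point deserving a line of care is to check that \emph{wide} and \emph{nondegenerate} really are conditions imposed on one and the same model $A_r\simeq\Map(\Delta[r],A)$ of the space of $r$-simplices, so that their conjunction is a bona fide fibre product over $A_r$; and that \emph{wide} is precisely the fibre condition over $a$ and nothing stricter. Both of these are supplied by the proof of Lemma~\ref{lem:U_r_A}, so they can simply be cited rather than re-argued.
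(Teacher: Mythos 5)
Your argument is correct and is essentially the paper's own (the paper gives no separate proof, treating the lemma as immediate from the identification $\nondeg A_r\simeq\Map_{\operatorname{nondegen}}(\Delta[r],A)$ together with the observation, made in the proof of Lemma~\ref{lem:U_r_A}, that wideness of a map $\Delta[r]\to A$ means exactly that its long edge is $a$). Your formulation of the conjunction of the two full subgroupoid conditions as a fibre product over $A_r$ is just a slightly more explicit rendering of the same definition-chase.
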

\end{blanko}

\begin{blanko}{Nondegeneracy in $\UU$.}
  In the case of $\UU: \simplexcategory\op\to\widehat{\Grpd}$, it is easy to describe the
  spaces $\nondeg \UU_r$.  They consist of stretched maps $\Delta[r] \to A$ for which
  none of the restrictions to principal edges $\Delta[1] \to A$ are degenerate.
  In particular we can describe the fibre over a given interval $A$
  (in analogy with \ref{lem:U_r_A}):
\end{blanko}
\begin{lemma}\label{lem:nondegU_r_A}
  We have a pullback square
  $$\xymatrix{
     (\nondeg A_r)_a \ar[r]\ar[d] & \nondeg \UU_r \ar[d] \\
     {*} \ar[r]_{\name A} & \UU_1   .
  }$$ 
\end{lemma}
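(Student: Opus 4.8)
The plan is to obtain this as a mild refinement of Lemma~\ref{lem:U_r_A}, using Lemma~\ref{wide+nondegA} to identify the refined fibre. First I would record that, by the description of $\nondeg U_r$ given just above the statement, $\nondeg U_r$ is the \emph{full} subgroupoid of $U_r$ spanned by those wide maps $\Delta[r]\to A$ none of whose principal-edge restrictions $\Delta[1]\to A$ is degenerate, and that the vertical map $\nondeg U_r \to U_1$ appearing in the statement is simply the restriction to this full subgroupoid of the generic ``forget the subdivision'' map $g\colon U_r\to U_1$ of Lemma~\ref{lem:U_r_A}. Since restricting to a full subgroupoid commutes with forming fibres, the fibre of $\nondeg U_r\to U_1$ over $\name A$ is the full subgroupoid, spanned by the nondegenerate maps, of the fibre of $g$ over $\name A$.

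Next I would invoke Lemma~\ref{lem:U_r_A}: the fibre of $g$ over $\name A$ is $(A_r)_a \simeq \Map_{\mathrm{wide}}(\Delta[r],A)$, the wide maps being exactly the $r$-simplices of $A$ whose longest edge is $a$. Passing to the nondegenerate part, the fibre of $\nondeg U_r\to U_1$ over $\name A$ is therefore $\Map_{\operatorname{wide+nondegen}}(\Delta[r],A)$, which by Lemma~\ref{wide+nondegA} is $(\nondeg A_r)_a$. Matching this against the left-hand column of the asserted square --- top-left corner $(\nondeg A_r)_a$, bottom-left corner $*$, mapping in via $\name A$ --- exhibits the square as a pullback. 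Equivalently, one can paste the asserted square onto the pullback square of Lemma~\ref{lem:U_r_A} and argue that the resulting rectangle is a pullback because the two vertical comparison maps $(\nondeg A_r)_a \hookrightarrow (A_r)_a$ and $\nondeg U_r \hookrightarrow U_r$ are both the inclusions of the full subgroupoids of nondegenerate simplices and the bottom map is the identity on $U_1$.

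I do not expect a genuine obstacle. The only point that deserves a line of care --- and it is essentially tautological --- is that nondegeneracy of a wide map $\Delta[r]\to A$ viewed as an $r$-simplex of $U$ coincides with its nondegeneracy viewed as an $r$-simplex of $A$: a principal edge of the $U$-simplex restricts, under the classifying identification $\Int^{\mathrm w}_{\Delta[r]/}\simeq\aInt_{\Xi[r-2]/}$, to the corresponding principal edge of $A$, and degeneracy is tested precisely on those edges. Granting this, the whole argument is bookkeeping with the two cited lemmas and the behaviour of fibres under passage to full subgroupoids.
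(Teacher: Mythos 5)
Your proposal is correct and matches the paper's intended argument: the paper states this lemma without a separate proof, presenting it exactly as the nondegenerate refinement of Lemma~\ref{lem:U_r_A}, with the fibre identified via the description of $\nondeg U_r$ in the preceding paragraph together with Lemma~\ref{wide+nondegA}. Your extra remark that nondegeneracy of the $U$-simplex agrees with nondegeneracy of the corresponding wide map $\Delta[r]\to A$ is precisely the content the paper folds into that preceding description, so nothing is missing.
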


\begin{blanko}{Finiteness conditions and \M intervals.}\label{bla:tight}
  Recall from \cite{GKT:DSIAMI-2} that a decomposition space $X$ is called 
  {\em locally finite} when $X_1$ is locally finite, and both $s_0 : X_0 \to 
  X_1$ and $d_1: X_2 \to X_1$ are finite maps.  Recall also that a complete
  decomposition space $X$ is called of locally finite length, or just {\em 
  tight}, when for each $a\in X_1$, there is an upper bound on the dimension
  of simplices with long edge $a$.  Recall finally that a complete 
  decomposition space
  is  called \M when it is locally finite
  and of locally finite length (i.e.~tight).  The \M condition can also 
  be formulated by saying that $X_1$ is locally finite and
  the `long-edge' map $$\sum \nondeg X_r \to X_1$$ is finite.

  A {\em \M interval} is an interval which is \M as a decomposition space.
\end{blanko}

\begin{prop}\label{prop:Mint=Rezk}
  Any \M interval is a Rezk complete Segal space.
\end{prop}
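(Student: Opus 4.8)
The plan is to show that a \M interval $A$ is a Segal space, is Rezk complete, and deduce the result from the standard fact that a Rezk complete Segal space is a complete decomposition space; but since the statement only claims the first two properties, the real work is Rezk completeness. First I would note that the Segal property is already available: by \ref{prop:i*flanked=Segal} (together with the fact, recorded in \ref{aInt}, that the underlying decomposition space of any interval is a Segal space), $A$ is a Segal space. So it remains to verify the Rezk completeness condition, i.e.\ that the space of invertible $1$-simplices of $A$ is exactly the degenerate ones: equivalently, that the natural map $A_0 \to A_1^{\mathrm{iso}}$ (picking out $s_0$) is an equivalence onto the subgroupoid of equivalences.

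The key point is to combine \emph{completeness} with \emph{tightness} (locally finite length). Completeness already gives that $s_0 : A_0 \to A_1$ is a monomorphism, so $A_0$ is a full subgroupoid of $A_1$ — the degenerate edges. We must show every equivalence $f \in A_1$ is degenerate. Here is where tightness enters: if $f$ were a non-degenerate equivalence, then (using that $A$ is a Segal space, so composition is available up to homotopy) one could compose $f$ with itself arbitrarily many times, and, since $f$ is invertible, none of the iterates $f^{\circ n}$ — nor more precisely the $n$-simplices all of whose principal edges are $f$ — would collapse to a degenerate simplex; this produces nondegenerate simplices of every dimension $r$ whose long edge is $f$ (or more carefully, whose long edge is the appropriate composite). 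But tightness says precisely that for each edge there is an upper bound on the dimension of nondegenerate simplices with that long edge. Contradiction. Hence $f$ is degenerate, and $A_1^{\mathrm{iso}} = A_0$, which is the Rezk condition.

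The step I expect to be the main obstacle is making the "iterate the equivalence" argument precise in the $\infty$-groupoid setting: one must check that if $f$ is a non-degenerate equivalence then, for every $r$, there genuinely exists a nondegenerate $r$-simplex with long edge a composite power of $f$, i.e.\ that invertibility propagates to all the face maps and prevents degeneracy at every level. In a $1$-category this is the easy observation that $f^n = \mathrm{id}$ would force $f$ to have finite order while still being non-identity, contradicting local finiteness of length once you look at the factorisation category; in the Segal-space setting one argues via the path space / Segal maps, using that $f$ being an equivalence means the degeneracy-detecting conditions fail at each stage. Once this is in hand, the conclusion is immediate, and then "Rezk complete Segal space $\Rightarrow$ complete decomposition space" is already recalled in the excerpt. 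I would also remark that, conversely, since $A$ is by hypothesis \M it is in particular complete, so there is nothing circular in invoking completeness here.
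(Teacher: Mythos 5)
Your first half is exactly the paper's: the Segal property comes from \ref{prop:i*flanked=Segal}. For the Rezk condition the paper simply cites \cite[Proposition 6.5]{GKT:DSIAMI-2} (``the filtration condition implies the Rezk condition''), whereas you set out to reprove that implication, and your sketch has a genuine gap at exactly the step you yourself flag as the main obstacle. Tightness (\ref{bla:tight}) is a bound \emph{for each fixed} edge $a\in A_1$ on the dimension of nondegenerate simplices whose long edge is $a$. Your construction iterates $f$: the Segal condition does give, for every $r$, a simplex in $\nondeg A_r$ all of whose principal edges are $f$, but its long edge is the composite $f^{\circ r}$, which varies with $r$; since no single edge supports simplices of unbounded dimension in that family, tightness is not contradicted, and the argument as stated does not close. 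Your parenthetical ``whose long edge is the appropriate composite'' is precisely where the proof leaks.

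The repair --- essentially what the cited proposition does --- is to make the long edge constant by alternating $f$ with an inverse. If $f\in A_1$ is a nondegenerate equivalence with source $x$, first note that $f^{-1}$ is also nondegenerate (if $f^{-1}\simeq s_0y$ then $s_0x\simeq f^{-1}\circ f\simeq f$, contradicting nondegeneracy of $f$); then for every $k$ the Segal condition yields a $2k$-simplex with principal edges $f,f^{-1},f,f^{-1},\dots$, which lies in $\nondeg A_{2k}$ because nondegeneracy is by definition a condition on principal edges, and whose long edge is the \emph{fixed} degenerate edge $s_0x$. Letting $k$ grow contradicts tightness at the single edge $s_0x$. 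With that fixed, the rest of your outline (completeness makes $s_0$ a monomorphism, and the absence of nondegenerate equivalences then identifies $A_1^{\eq}$ with $s_0A_0$) does give Rezk completeness; alternatively you may simply quote \cite[Proposition 6.5]{GKT:DSIAMI-2}, as the paper does, in which case your remaining steps coincide with the paper's two-line proof.
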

\begin{proof}
  Just by being an interval it is a Segal space (by \ref{prop:i*flanked=Segal}).
  Now the filtration condition implies the Rezk condition by
  \cite[Corollary~8.7]{GKT:DSIAMI-2}.
\end{proof}

\begin{lemma}\label{XFILT=>IFILT}
  If $X$ is a \FILT decomposition space, then for each $a\in X_1$, the
  interval $I(a)$ is a \FILT decomposition space.
\end{lemma}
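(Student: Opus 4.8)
The plan is to exploit the cULF map $\epsilon : I(a) \to X$ produced by the factorisation-interval construction, together with the characterisation of the \FILT (tight) condition in terms of the long-edge map. Recall that a complete decomposition space $Y$ is \FILT precisely when $Y_1$ is locally finite and the long-edge map $\sum_r \nondeg Y_r \to Y_1$ is finite (\ref{bla:tight}); equivalently, for each $b \in Y_1$ there is an upper bound on the dimension of nondegenerate simplices with long edge $b$, and the space of such simplices is finite. First I would record that $I(a)$ is a complete decomposition space (indeed a Segal space, by \ref{prop:i*flanked=Segal}) with longest edge $a' \in I(a)_1$ mapping to $a$ under $\epsilon$, so that it makes sense to speak of the \FILT condition for $I(a)$.

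The key point is that a cULF map reflects nondegeneracy and preserves the combinatorics of the long-edge map. Since $\epsilon$ is cULF, it is cartesian on all generic maps, and in particular the squares relating $\nondeg I(a)_r$ to $\nondeg X_r$ via the long-edge maps are pullbacks. Concretely, for the longest edge $a' \in I(a)_1$, the fibre of the long-edge map $\sum_r \nondeg I(a)_r \to I(a)_1$ over $a'$ is, by Lemma~\ref{wide+nondegA} together with cULFness of $\epsilon$, equivalent to the fibre of $\sum_r \nondeg X_r \to X_1$ over $a = \epsilon(a')$. But for the interval $I(a)$, \emph{every} nondegenerate simplex has longest edge $a'$ (since all maps of intervals are wide, so restrict to $a'$ on the long edge --- this is the content of Lemma~\ref{wide+nondegA}), so the entire space $\sum_r \nondeg I(a)_r$ is this single fibre. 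Since $X$ is \FILT, that fibre is finite and bounded in dimension, and hence so is $\sum_r \nondeg I(a)_r$; in particular $I(a)$ is tight, and also $I(a)_1$ is locally finite because each of its points lies in the image of a simplex over $a$ and the relevant mapping spaces inject into finite spaces over $X$.

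The main obstacle I anticipate is bookkeeping the completeness and finiteness of $I(a)_1$ itself (as opposed to the long-edge fibre), since a priori $I(a)_1$ could be a large space; but the faithfulness result \ref{aInt-faithful} and the pullback description of $\nondeg U_r$ in \ref{lem:nondegU_r_A} (applied with $A = $ the flanked decomposition space underlying $I(a)$) show that $I(a)_1 = (\nondeg I(a)_{\leq 1})_a$-type data sits inside finite pieces of $X$, so local finiteness descends. Once the long-edge map of $I(a)$ is identified with a fibre of the long-edge map of $X$, finiteness and the dimension bound are immediate, and the \FILT condition for $I(a)$ follows. Thus the proof reduces to: (i) $I(a)$ is a complete decomposition space with a distinguished longest edge; (ii) cULFness of $\epsilon$ gives a pullback square expressing $\sum_r \nondeg I(a)_r$ as the $a$-fibre of $\sum_r \nondeg X_r \to X_1$; (iii) \FILT-ness of $X$ makes that fibre finite and bounded; (iv) conclude.
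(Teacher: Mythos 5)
Your overall instinct --- exploit the cULF map $\epsilon\colon I(a)\to X$ and transfer the length condition along it --- is exactly the route the paper takes (the paper's proof is one line: anything cULF over a tight decomposition space is again tight, citing [GKT:DSIAMI-2, Prop.~6.4]). But your execution has two genuine errors. First, you misstate the \FILT condition: tight means \emph{locally finite length} only, i.e.\ for each $b\in Y_1$ there is an upper bound on the dimension of (nondegenerate) simplices with long edge $b$. The characterisation you quote --- ``$Y_1$ locally finite and $\sum_r\nondeg Y_r\to Y_1$ finite'' --- is the \emph{\M} condition, not tightness (see \ref{bla:tight}). Consequently your attempt to also establish local finiteness of $I(a)_1$ is both off-target for this lemma and unsupported: in the paper that is a separate lemma, proved by noting $I(a)\to X$ is a finite map because it is pulled back along $\name{a}\colon 1\to X_1$, which requires the extra hypothesis that $X_1$ is locally finite --- a hypothesis not available here.

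Second, and more seriously, your central claim that ``every nondegenerate simplex of $I(a)$ has longest edge $a'$'' is false. Lemma~\ref{wide+nondegA} only identifies the \emph{wide} nondegenerate maps $\Delta[r]\to A$ with nondegenerate simplices whose long edge is $a$; it does not say all simplices of $I(a)$ are wide. An interval has plenty of nondegenerate simplices whose long edge is a proper ``subfactorisation'' (e.g.\ for $X$ the nerve of the poset $\{0<1<2\}$ and $a\colon 0\to 2$, the simplex on $0\to 1$ in $I(a)$). So $\sum_r\nondeg I(a)_r$ is not a single fibre of the long-edge map, and your step (ii) collapses. The repair is to apply cULFness at \emph{every} edge $b\in I(a)_1$, not just at $a'$: cartesianness of $\epsilon$ on the generic map $[1]\to[r]$ gives $I(a)_r\simeq I(a)_1\times_{X_1}X_r$, so the fibre of $I(a)_r\to I(a)_1$ over $b$ agrees with the fibre of $X_r\to X_1$ over $\epsilon(b)$; tightness of $X$ at $\epsilon(b)$ then yields the required dimension bound at $b$. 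This fibrewise argument is precisely the content of the cited general fact that cULF over tight is tight, which is all the paper uses.
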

\begin{proof}
  We have a \culf map $I(a)\to X$, and anything \culf over tight is again tight
  (see \cite[Proposition 6.5]{GKT:DSIAMI-2}).
\end{proof}

\begin{lemma}
  If $X$ is a locally finite decomposition space then for each $a\in X_1$,
  the interval $I(a)$ is a locally finite decomposition space.
\end{lemma}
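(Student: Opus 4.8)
The plan is to reuse the cULF map $p\colon I(a)\to X$ furnished by the factorisation-interval construction --- the same one exploited in the proof of Lemma~\ref{XFILT=>IFILT} --- but now to extract finiteness rather than tightness. Since $I(a)$ is already known to be a decomposition space (indeed a Segal space), being \emph{locally finite} amounts to three things: that $I(a)_1$ is locally finite, that $s_0\colon I(a)_0\to I(a)_1$ is a finite map, and that $d_1\colon I(a)_2\to I(a)_1$ is a finite map.

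The last two conditions I would dispatch at once. As $p$ is cULF it is cartesian on all generic maps, in particular on the degeneracy $s_0$ and on the inner face map $d_1$; hence the naturality squares for $s_0$ and for $d_1$ exhibit $s_0\colon I(a)_0\to I(a)_1$ and $d_1\colon I(a)_2\to I(a)_1$ as base changes of $s_0\colon X_0\to X_1$ and $d_1\colon X_2\to X_1$ respectively. These two maps in $X$ are finite by hypothesis, and finiteness of maps is stable under base change, so we are done with these two conditions.

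The substantive point is local finiteness of $I(a)_1$; I would in fact prove the stronger assertion that $I(a)_1$ is a finite $\infty$-groupoid. Unwinding the construction, $I(a)_1$ is the fibre over $a$ of the ``long-edge'' map $\ell\colon X_3\to X_1$, i.e.\ the image under $X$ of the generic map $[1]\to[3]$ onto the two endpoints. Factor $\ell$ as a composite of inner face maps $X_3\xrightarrow{d_1}X_2\xrightarrow{d_1}X_1$; the second factor is finite by hypothesis. For the first, note that the decomposition-space axiom, applied to the generic--free pushout $[3]=[2]\sqcup_{[1]}[2]$ in $\Delta$, makes
$$\xymatrix{
  X_3 \ar[r]^{d_3}\ar[d]_{d_1} & X_2 \ar[d]^{d_1} \\
  X_2 \ar[r]_{d_2} & X_1
}$$
a pullback, so $d_1\colon X_3\to X_2$ is a base change of the finite map $d_1\colon X_2\to X_1$ and is therefore finite. (Alternatively, one may just cite from \cite{GKT:DSIAMI-2} the basic fact that in a locally finite decomposition space every $X_n$ is locally finite and every inner face map is finite.) Hence $\ell$ is a composite of finite maps, so finite, and its fibre $I(a)_1$ over $a$ is a finite $\infty$-groupoid, in particular locally finite.

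The only real obstacle is this last step: one has to know that the map from $X_3$ onto the long edge is finite, and this genuinely uses the decomposition-space axiom --- realising an inner face map as a base change of $d_1\colon X_2\to X_1$ --- rather than merely the Segal-type structure of the interval. Everything else is formal manipulation of cULF maps together with stability of finiteness under pullback.
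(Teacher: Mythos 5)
Your argument is correct, and each step checks out: $I(a)\to X$ is indeed cULF, so the $s_0$- and $d_1$-conditions for $I(a)$ are base changes of those for $X$; the identification of $I(a)_1$ with the fibre over $a$ of the long-edge map $X_3\to X_1$ is the right unwinding of the wide--cartesian factorisation (the same unwinding that underlies Lemma~\ref{lem:U_r_A}); and the square you invoke is exactly the image of the generic--free pushout $[3]=[2]\sqcup_{[1]}[2]$, so the decomposition-space axiom does exhibit $d_1\colon X_3\to X_2$ as a base change of the finite map $d_1\colon X_2\to X_1$. Your route is, however, genuinely different from the paper's, which is much shorter: there one observes that $I(a)$ is by construction the levelwise pullback of the double dec of $X$ along $\name{a}\colon 1\to X_1$, that this map is finite because $X_1$ is locally finite (\cite[Lemma 3.14]{GKT:HLA}), so that $I(a)\to X$ is a finite morphism of decomposition spaces, and one concludes from the finiteness results of \cite{GKT:DSIAMI-2} that its domain is locally finite since $X$ is. Thus the paper's key finiteness input is local finiteness of $X_1$, whereas yours is finiteness of $d_1\colon X_2\to X_1$: you never use local finiteness of $X_1$, and you obtain the stronger conclusion that $I(a)_1$ is a finite $\infty$-groupoid (iterating your pullback argument gives finiteness of every $I(a)_r$), essentially a locally finite precursor of Proposition~\ref{prop:Mint=finite}. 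The trade-off is that your proof requires the explicit levelwise description of $I(a)$ and an extra application of the decomposition-space axiom, while the paper's is three lines resting on the cited lemmas.
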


\begin{proof}
  The morphism of decomposition spaces $I(a) \to X$ was constructed by pullback
  of the map $\terminal \stackrel{\name a}\to X_1$ which is finite  since $X_1$ is locally finite
  (see \cite[Lemma 3.16]{GKT:HLA}). 
  Hence $I(a) \to X$
  is a finite morphism of decomposition spaces, and therefore $I(a)$ is locally finite 
  since $X$ is.
\end{proof}
From these two lemmas we get
\begin{cor}\label{XM=>IM}
  If $X$ is a \M decomposition space, then for each $a\in X_1$, the
  interval $I(a)$ is a \M interval.
\end{cor}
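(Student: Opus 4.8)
The plan is to read the statement off from the two preceding lemmas together with the definition of \M-ness, so the proof will be essentially formal. Recall from \ref{bla:tight} that a complete decomposition space is \M precisely when it is both locally finite and of locally finite length (tight), and that a \M interval is by definition simply an interval which is \M as a decomposition space. Hence three things have to be checked for $I(a)$: that it is a complete decomposition space, that it is locally finite, and that it is tight.

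First I would observe that completeness is already built into the construction: by the factorisation-interval construction $I(a) = i\upperstar A$ for an algebraic interval $A$, so $I(a)$ is an interval, and in particular (as recorded in the discussion of complete $\Xi\op$-spaces) a complete decomposition space. For tightness I would invoke Lemma~\ref{XFILT=>IFILT}: the canonical map $I(a)\to X$ is cULF, and anything cULF over a tight decomposition space is again tight (\cite[Proposition 6.4]{GKT:DSIAMI-2}). For local finiteness I would invoke the lemma immediately above: the decomposition space $I(a)$ was obtained by pulling back the name map $1 \stackrel{\name a}\longrightarrow X_1$, which is a finite map since $X_1$ is locally finite (\cite[Lemma 3.14]{GKT:HLA}); thus $I(a)\to X$ is a finite morphism of decomposition spaces, and since $X$ is locally finite so is its domain $I(a)$.

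Combining the three points, $I(a)$ is a complete decomposition space that is both locally finite and tight, hence \M, and — being an interval — a \M interval, as claimed. There is no real obstacle here: the corollary is a formal consequence of the two lemmas, and the genuine content sits in those, resting respectively on the stability of finite maps under pullback and on the stability of the tightness condition along cULF maps. (One could instead argue directly from the "long-edge map" reformulation of the \M condition in \ref{bla:tight}, combining Lemma~\ref{wide+nondegA} with the cULF property of $I(a)\to X$, but packaging the argument through the two lemmas is cleaner.)
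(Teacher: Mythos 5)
Your argument is exactly the paper's: the corollary is stated there as an immediate consequence of Lemma~\ref{XFILT=>IFILT} (tightness via the cULF map $I(a)\to X$) and the lemma on local finiteness (via pullback of the finite map $1\stackrel{\name a}\to X_1$), which is precisely how you assemble it. Your additional remark that $I(a)$ is an interval, hence complete, is consistent with the construction and fills in the same implicit point the paper leaves unstated.
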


\begin{prop}\label{prop:Mint=finite}
  If $A$ is a \M interval then for every $r$, the space $A_r$ is finite.
\end{prop}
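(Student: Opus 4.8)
The plan is to bootstrap from the two finiteness conditions that make up the \M property of an interval $A$: local finiteness and tightness (locally finite length). Since $A$ is an interval, its underlying simplicial space is a Segal space, so $A_r$ is determined by iterated fibre products of $A_1$ over $A_0$. Thus it suffices to control $A_0$ and $A_1$, and then bound how the Segal fibre products can grow. First I would use local finiteness to see that $A_1$ is a finite $\infty$-groupoid: by definition local finiteness says $A_1$ is locally finite, $s_0\colon A_0\to A_1$ is finite, and $d_1\colon A_2\to A_1$ is finite; but completeness of $A$ means $s_0\colon A_0\to A_1$ is a monomorphism, so $A_0$ is a finite $\infty$-groupoid provided $A_1$ is, and $A_1$ itself is finite because $A$ is reduced ($A[-1]\simeq *$) and flanked — the flanking pullbacks express $A_1$ as the fibre of finite maps over the point $A_{-1}\simeq *$, or more directly because the longest edge $a\in A_1$ generates everything, i.e.\ $\Map_{\mathrm{wide}}(\Delta[1],A)$ is contractible. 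So every element of $A_1$ lies over $a$ in a suitable sense.

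The crucial step is combining tightness with the Segal condition. Tightness says there is an integer $N$ such that every simplex with longest edge $a$ has dimension at most $N$; equivalently $\nondeg A_r = \emptyset$ for $r>N$, and moreover (Lemma~\ref{wide+nondegA}) $(\nondeg A_r)_a \simeq \Map_{\mathrm{wide}+\mathrm{nondegen}}(\Delta[r],A)$. Since $A$ is an interval, \emph{every} wide map $\Delta[r]\to A$ factors through its nondegenerate part by collapsing the degenerate principal edges, so for each $r$ the space $A_r$ (which by Lemma~\ref{lem:U_r_A}-type reasoning, or simply because $A$ is reduced, equals $\Map_{\mathrm{wide}}(\Delta[r],A) = (A_r)_a$) decomposes as a finite sum, indexed by the surjections $[r]\twoheadrightarrow [j]$ with $j\leq N$, of the finite spaces $\nondeg A_j{}_{,a}$. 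Concretely: $A_r \simeq \coprod_{[r]\twoheadrightarrow[j],\, j\le N} \nondeg A_j$, where each summand is taken with its $a$-fibre. There are finitely many such surjections, so it remains only to show each $\nondeg A_j$ is finite for $j\le N$.

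For the finiteness of $\nondeg A_j$ when $j\le N$, I would argue by induction on $j$, using the Segal condition $A_j \simeq A_1\times_{A_0}\cdots\times_{A_0}A_1$ ($j$ factors). Since $A_0$ and $A_1$ are finite $\infty$-groupoids (from the previous paragraph) and finite $\infty$-groupoids are closed under finite fibre products, $A_j$ is finite for every fixed $j$; in particular $\nondeg A_j\subset A_j$ is finite, being a full subgroupoid of a finite groupoid (\cite{GKT:HLA}). Feeding this back into the decomposition $A_r\simeq\coprod_{[r]\twoheadrightarrow[j],\,j\le N}\nondeg A_j$ shows $A_r$ is a finite sum of finite $\infty$-groupoids, hence finite.

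The main obstacle I anticipate is the first paragraph — showing $A_1$ (and hence $A_0$) is genuinely a finite $\infty$-groupoid rather than merely locally finite. Local finiteness of the decomposition space only gives that $A_1$ is locally finite and that certain maps out of $A_2$ are finite; upgrading this to global finiteness of $A_1$ must use that $A$ is an \emph{interval}, i.e.\ reduced: the contractibility of $A_{-1}$ together with the flanking squares forces $A_1$ to sit as a finite groupoid over a point. I would make this precise by noting that $s_0 s_{-1}$-type degeneracies and the flanking pullbacks of Lemma~\ref{s-1-newbonuspullbacks} exhibit $A_1$ as cartesian over $A_{-1}\simeq *$ along finite maps, so $A_1$ is finite; alternatively, invoke that $A\simeq I(a)$ for its longest arrow $a$ and that $I(a)$ is built by pullback along $\name a\colon 1\to X_1$ from a locally finite situation, making $A_1$ the fibre of a finite map over a point. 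Once $A_1$ is finite the rest is a routine closure argument, so the whole difficulty is concentrated there.
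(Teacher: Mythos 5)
The crux of this proposition is exactly the step you defer to the end: producing a \emph{finite} map of which $A_1$ (and each $A_r$) is a fibre over a point. Local finiteness only guarantees finiteness of the generic maps of the underlying simplicial space (e.g.\ $d_1\colon A_2\to A_1$ and its iterates), and none of these maps has target $A_{-1}$; so the phrase ``the flanking pullbacks express $A_1$ as the fibre of finite maps over the point $A_{-1}\simeq *$'' is not yet an argument, and the alternative gesture via $A\simeq I(a)$ is circular in the same place (the finite map invoked there has to be identified first). You correctly flag this as the main obstacle, but it is precisely the whole content of the paper's proof: pasting the flanking squares of \ref{flanked} with the bonus pullbacks of Lemma~\ref{s-1-newbonuspullbacks} gives a composite pullback square exhibiting $A_r$ as the fibre, over the point $\name{a}\colon 1\to A_1$ given by the outer degeneracies $1\simeq A_{-1}\to A_0\to A_1$, of an iterated inner face map $A_{r+2}\to A_1$; this map is finite because $A$ is locally finite, so $A_r$ is finite. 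Note that this argument treats all $r$ uniformly and uses neither tightness nor the Segal condition, whereas your route needs tightness and the nondegenerate decomposition only because of how it is organised.

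There is also a genuine error in your middle paragraph: $A_r$ is \emph{not} $\Map_{\mathrm{wide}}(\Delta[r],A)$. By Lemma~\ref{lem:U_r_A}, wide maps $\Delta[r]\to A$ form $(A_r)_a$, the $r$-simplices whose long edge is the longest edge $a$, and this is strictly smaller than $A_r$ in general (already for the poset $0<x<1$ viewed as an interval, $A_1$ has several components but $(A_1)_a$ is a point). The correct relation, coming from the flanking pullbacks, is $(A_{r+2})_a\simeq A_r$ -- and that identification is exactly the engine of the paper's proof. Your closing observation is right as far as it goes: once $A_0$ and $A_1$ are known to be finite, the Segal condition together with closure of finite $\infty$-groupoids under finite limits would finish the job, making the tightness/nondegeneracy detour unnecessary; but that shortcut still rests entirely on the missing fibre argument for $A_1$, so as written the proposal does not establish the proposition.
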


\begin{proof}
  The squares
  $$\xymatrix{
     A_0 \drpullback \ar[r]^-{s_{\top+1}}\ar[d] & A_1 \drpullback 
     \ar[r]^-{s_{\bot-1}}\ar[d]_{d_0} & A_2 \ar[d]^{d_1} \\
     \terminal \ar@/_1pc/[rr]_{\name a} \ar[r]^-{s_{\top+1}} & A_0\ar[r]^-{s_{\bot-1}} & A_1
  }$$
  are pullbacks by the flanking condition \ref{flanked} (the second is a bonus
  pullback, cf.~\ref{s-1-newbonuspullbacks}).  The bottom composite arrow picks
  out the long edge $a\in A_1$.  (That the outer square is a pullback can be
  interpreted as saying that the $2$-step factorisations of $a$ are parametrised
  by their midpoint, which can be any point in $A_0$.)  Since the active maps 
  of $A$ are finite (simply by the assumption that $A$ is locally finite) 
  in particular the map $d_1:A_2 \to A_1$ is finite, hence
  the fibre $A_0$ is finite.  The same argument works for arbitrary $r$, by
  replacing the top row by $A_r \to A_{r+1} \to A_{r+2}$, and letting the columns
  be $d_0^r$, $d_0^r$ and $d_1^r$.
\end{proof}
\noindent
(This can be seen as a homotopy version of \cite[Lemma~2.3]{LawvereMenniMR2720184}.)

\begin{cor}
  For a \M interval, the total space of all nondegenerate simplices $\sum_r \nondeg 
  A_r$ is finite.
\end{cor}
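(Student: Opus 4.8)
The plan is to combine Proposition~\ref{prop:Mint=finite} with the reformulation of the \M condition recalled in \ref{bla:tight}. First I would note that Proposition~\ref{prop:Mint=finite} applied with $r=1$ tells us that $A_1$ is a finite $\infty$-groupoid. Next, since $A$ is by hypothesis \M as a decomposition space, the reformulated \M condition says exactly that $A_1$ is locally finite and that the long-edge map $\sum_r \nondeg A_r \to A_1$ is a finite map, i.e.\ has finite fibres.

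The corollary then follows from the general principle that a finite map whose codomain is a finite $\infty$-groupoid has finite domain: the total space $\sum_r \nondeg A_r$ fibres over the finite space $A_1$ with finite fibres, hence is itself finite. I do not expect any genuine obstacle here; the only real input is Proposition~\ref{prop:Mint=finite}, which is precisely what makes the interval case stronger than the case of a general \M decomposition space $X$, where $\sum_r \nondeg X_r$ is only locally finite rather than finite (because $X_1$ need not be finite).

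If one prefers a more hands-on argument, it can be done as follows. Tightness of $A$ provides, for each $b \in A_1$, an upper bound $N_b$ on the dimension of simplices with long edge $b$; since $A_1$ has only finitely many connected components (again by Proposition~\ref{prop:Mint=finite}), we may set $N := \max_b N_b$, and then $\nondeg A_r = \emptyset$ for all $r > N$, since the long edge of any $r$-simplex lies in $A_1$. Hence $\sum_r \nondeg A_r = \sum_{r=0}^{N} \nondeg A_r$ is a finite sum of spaces $\nondeg A_r \subseteq A_r$, each finite by Proposition~\ref{prop:Mint=finite}, so the total space is finite.
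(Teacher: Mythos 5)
Your main argument is exactly the paper's proof: the long-edge map $\sum_r \nondeg A_r \to A_1$ is finite by the \M condition (as reformulated in the tightness discussion), and $A_1$ is finite by Proposition~\ref{prop:Mint=finite}, so the total space is finite. The alternative hands-on argument you sketch is also fine, but the first version is precisely what the paper does.
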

\begin{proof}
  This follows from Proposition~\ref{prop:Mint=finite} and the fact that
  a complete decomposition space is \M if and only if the map
  $$
  \sum_r d_1{}^{r-1}: \sum_r \nondeg X_r \to X_1$$
  is finite (see \ref{bla:tight}).  
\end{proof}

\begin{cor}\label{cor:<kappa}
  A \M interval is $\kappa$-bounded for any uncountable cardinal $\kappa$.
\end{cor}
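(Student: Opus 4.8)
The plan is to read this off directly from Proposition~\ref{prop:Mint=finite}. Recall from \ref{kappa} that a simplicial space is $\kappa$-bounded exactly when each of its spaces of simplices is $\kappa$-compact, i.e.\ lies in $\Grpd^\kappa$. So, given a \M interval $A$, it suffices to check that each $A_r$ is $\kappa$-compact for every uncountable cardinal $\kappa$. But Proposition~\ref{prop:Mint=finite} tells us precisely that every $A_r$ is a finite $\infty$-groupoid, so the whole question reduces to the purely homotopy-theoretic statement: \emph{a finite $\infty$-groupoid is $\kappa$-compact whenever $\kappa$ is uncountable.}

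To prove that statement I would argue as follows. A finite $\infty$-groupoid $S$ has finitely many components, each with finitely many nonzero homotopy groups, all of them finite groups. Hence $S$ has a finite Postnikov tower whose successive fibres are Eilenberg--Mac\,Lane spaces $K(G,m)$ on finite groups $G$; since each such $K(G,m)$ is a countable CW complex, an induction up the tower shows that $S$ is itself (equivalent to) a countable CW complex, hence an $\omega_1$-small colimit of copies of the point. It is therefore $\omega_1$-compact, and a fortiori $\kappa$-compact for every uncountable $\kappa$.

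Putting the two halves together: all the spaces $A_r$ are $\kappa$-compact, so $A$ takes values in $\Grpd^\kappa$, i.e.\ $A$ is $\kappa$-bounded.

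The only genuinely delicate point is the second paragraph, and even there the subtlety is merely one of bookkeeping about cardinals: one must resist the temptation to say ``finite $\infty$-groupoids are compact'', which is false already for $B\Z/2$; the correct bound is $\omega_1$, which is why the corollary is phrased for uncountable $\kappa$ only. Everything else is immediate from Proposition~\ref{prop:Mint=finite} and the definition of $\kappa$-boundedness recalled in \ref{kappa}.
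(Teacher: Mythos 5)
Your proposal is correct and is exactly the argument the paper intends: the corollary is stated without proof because it follows immediately from Proposition~\ref{prop:Mint=finite} together with the (standard) fact that a finite $\infty$-groupoid is $\kappa$-compact for every uncountable $\kappa$, which is precisely your reduction. Your second paragraph just spells out that standard fact carefully (including the correct observation that the bound is $\omega_1$, not $\omega$), so there is nothing to add.
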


\begin{blanko}{The decomposition space of \M intervals.}  
  There is a decomposition space $\MI\subset \UU$ consisting of all \M
  intervals.  In each degree, $\MI_k$ is the full subgroupoid of $\UU_k$
  consisting of the stretched maps $\Delta[k] \to A$ for which $A$ is \M. While $\UU$
  is large, $\MI$ is a legitimate decomposition space by \ref{cor:<kappa} and
  \ref{kappa}.
\end{blanko}

\begin{theorem}\label{thm:MI=M}
  The decomposition space $\MI$ is \M.
\end{theorem}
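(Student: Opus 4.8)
The plan is to verify the two defining conditions of a \M decomposition space for $\MI$: local finiteness (that $\MI_1$ is locally finite, and that $s_0 : \MI_0 \to \MI_1$ and $d_1 : \MI_2 \to \MI_1$ are finite maps) and local finite length (tightness, i.e.\ a uniform bound on the dimension of simplices with a given long edge). By \ref{kappa} and \ref{cor:<kappa} we already know $\MI$ is a legitimate (small) decomposition space, so the only issue is the finiteness. I would reduce everything to fibrewise statements over $\MI_1 \simeq (\text{\M intervals})^\eq$ using the pullback description in \ref{lem:U_r_A}: the fibre of $g : \MI_r \to \MI_1$ over an interval $A$ is $(A_r)_a$, the space of $r$-simplices of $A$ with long edge $a$, and similarly $\nondeg\MI_r$ has fibre $(\nondeg A_r)_a$ by \ref{lem:nondegU_r_A}.

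First I would establish tightness. For a fixed \M interval $A$, tightness of $A$ itself gives a bound $N_A$ on the dimension of nondegenerate simplices with long edge $a$; but since $A \simeq I(a)$ and $a$ is precisely the long edge $0\to 1$, this is the same as a bound on the length of chains of two-step factorisations, which is finite because $A$ is \M. So each fibre of $\sum_r \nondeg \MI_r \to \MI_1$ is finite (indeed $\sum_r \nondeg A_r$ is finite by the Corollary following \ref{prop:Mint=finite}). Combined with local finiteness of $\MI_1$ (discussed next), the reformulation of the \M condition at the end of \ref{bla:tight} — that a complete decomposition space is \M iff $X_1$ is locally finite and the long-edge map $\sum_r \nondeg X_r \to X_1$ is finite — then finishes the proof once I check that map is finite, i.e.\ has finite fibres, which is exactly what \ref{lem:nondegU_r_A} plus the Corollary to \ref{prop:Mint=finite} give.

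The substantive point, and what I expect to be \textbf{the main obstacle}, is showing $\MI_1$ is locally finite: that is, for each \M interval $A$, the space $\Aut(A)$ of automorphisms (the loop space of $\MI_1$ at $[A]$, since $\MI_1$ is a groupoid) is a finite $\infty$-groupoid, and moreover that $\MI_1$ itself, being a groupoid, has this property at every point — plus checking finiteness of $s_0$ and $d_1$. For $d_1 : \MI_2 \to \MI_1$: its fibre over $A$ is $(A_2)_a$, the space of two-step factorisations of $a$, which by the pullback in the proof of \ref{prop:Mint=finite} is $A_0$, finite by \ref{prop:Mint=finite}; so $d_1$ is finite. For $s_0 : \MI_0 \to \MI_1$: $\MI_0 \to \MI_1$ is a monomorphism (shown in \ref{UcompleteDecomp}), hence automatically finite. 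The remaining piece — local finiteness of the groupoid $\MI_1$ — I would argue by noting that an automorphism of $A$ is determined by its action on the finite set (up to equivalence) $\sum_r \nondeg A_r$ of nondegenerate simplices, since by completeness and the Segal condition (\ref{prop:Mint=Rezk}) an interval map is determined by where it sends nondegenerate simplices; more precisely $A$ is, as a Rezk complete Segal space, the colimit of its nondegenerate simplices, so $\Map(A,A)$ embeds into a finite product of the finite spaces $(\nondeg A_r)_a$, forcing $\Aut(A)$ finite. This, together with $\MI_1$ being an essentially small groupoid with finite components (each component having finite automorphism group), yields that $\MI_1$ is locally finite, completing the verification that $\MI$ is \M.
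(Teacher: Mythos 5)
Your overall skeleton matches the paper's: use the reformulation of the \M condition from \ref{bla:tight}, check finiteness of the long-edge map $\sum_r\nondeg\MI_r\to\MI_1$ fibrewise via \ref{lem:nondegU_r_A} (the fibre over $A$ is $\sum_r(\nondeg A_r)_a$, finite because $A$ is \M), and then face the substantive point, local finiteness of $\MI_1$. The checks of $s_0$ and $d_1$ are redundant once you invoke that reformulation, but harmless. The problem is the last step, which is exactly where the real work lies, and your argument for it has a genuine gap. You claim that a self-map of $A$ is ``determined by its action on nondegenerate simplices'' and that consequently $\Map(A,A)$ \emph{embeds} into a finite product of the finite spaces $(\nondeg A_r)_a$, forcing $\Aut(A)$ to be finite. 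Neither half of this is justified. First, the reduction to nondegenerate simplices is not a consequence of completeness and the Segal/Rezk condition alone; the paper obtains it from the splitness theorem of \cite{GKT:DSIAMI-2} (\M\ implies split, and restriction along $\Deltainj\to\Delta$ identifies split decomposition spaces with semi-decomposition spaces), which is what legitimises computing $\Eq_{\Decomp}(A)$ as $\Map_{\Fun(\Deltainj\op,\Grpd)}(\nondeg A,\nondeg A)$. Second, and more seriously, even after this reduction the mapping space of semi-simplicial diagrams does \emph{not} embed into the product of the componentwise mapping spaces: a natural transformation in the $\infty$-setting carries coherence data over the face maps, and the comparison map to the product of components is not a monomorphism (its fibres are spaces of homotopies filling naturality squares, which need not be empty or contractible). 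So ``injects into a finite space, hence finite'' does not go through as stated.

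What the paper does instead, and what your argument is missing, is to express this mapping space as a \emph{finite limit of finite $\infty$-groupoids}: tightness gives $\nondeg A_k=\emptyset$ for $k$ large, so one may truncate to $\Deltainj^{\leq r}$, and then a separate lemma (proved via the edgewise subdivision $\widetilde K$ of a finite simplicial set, following \cite{glasman-1408.3065v3}) computes $\Map_{\Fun(K\op,\grpd)}(X,Y)$ as a limit over the finite category $\widetilde K$ of mapping spaces of finite groupoids, which is finite by \cite[Proposition 3.9]{GKT:HLA}. To repair your proof you would need both ingredients: the splitness equivalence to justify passing to $\nondeg A$ at all, and the finite-limit (rather than subspace-of-a-product) argument to conclude finiteness of $\Eq_{\Decomp}(A)$.
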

\begin{proof}
  We first prove that the map $\sum_r \nondeg \MI_r \to \MI_1$ is a finite 
  map. Just check the fibre: fix a \M interval $A\in \MI_1$, with longest edge
  $a\in A_1$.
  From Lemma~\ref{lem:nondegU_r_A} we see that the fibre over $A$ is 
  $(\sum_r \nondeg A_r)_a=\sum_r (\nondeg A_r)_a$.  But this is the fibre 
  over $a\in  A_1$ of the map $\sum_r \nondeg A_r \to A_1$, which is finite
  by the assumption that $A$ is \M.
  
  Next we show that the $\infty$-groupoid $\MI_1$ is locally finite.
  But $\MI_1$ is the space of
  \M intervals, a full subcategory of the space of all decomposition spaces,
  so we need to show, for any \M interval $A$, that $\Eq_{\Decomp}(A)$ is 
  finite.  Now we exploit an important property of \M decomposition spaces,
  namely that they are {\em split} \cite{GKT:DSIAMI-2}: this means that face maps preserve 
  nondegenerate simplices.  The key feature of split decomposition spaces is
  that they are essentially semi-decomposition spaces 
  (i.e.~$\Deltainj\op$-spaces satisfying the decomposition-space axioms for 
  face maps) with degeneracies freely added.  More formally, restriction along
  $\Deltainj \to \simplexcategory$ yields an equivalence of $\infty$-categories
  between split decomposition spaces and \culf maps, and semi-decomposition spaces 
  and ULF maps \cite[5.20]{GKT:DSIAMI-2}.
  
  Since $A$ is split, we can compute $\Eq_{\Decomp}(A)$ inside
  the $\infty$-groupoid of split decomposition spaces,
  which is equivalent to the $\infty$-groupoid
  of semi-decomposition spaces.  So we have reduced to computing
  $$
  \Map_{\Fun(\Deltainj\op,\Grpd)}(\nondeg A,\nondeg A) .
  $$
  Now we know that all $\nondeg A_k$ are finite, so the mapping space can 
  be computed in the functor $\infty$-category with values in $\grpd$, the 
  $\infty$-category of finite $\infty$-groupoids.
  On the other hand we also know that these
  $\infty$-groupoids are
  empty for $k$ big enough, say 
  $\nondeg A_k = \varnothing$ for $k > r$.  Hence we can compute this mapping 
  space as a functor $\infty$-category on the truncation $\Deltainj^{\leq r}$.
  So we are finally talking about a functor $\infty$-category over a finite simplicial set
  (finite in the sense: only finitely many nondegenerate simplices), and with 
  values in finite $\infty$-groupoids.
  So we are done by the following lemma.
\end{proof}

Recall that $\grpd$ denotes the $\infty$-category of finite $\infty$-groupoids.
\begin{lemma}
  Let $K$ be a finite simplicial set, and let $X$ and $Y$ be 
  presheaves on $K$
    valued in finite $\infty$-groupoids.
  Then
  $$
  \Map_{\Fun(K\op,\grpd)}(X,Y)
  $$
  is finite.
\end{lemma}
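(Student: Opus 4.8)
The statement is elementary, and the plan is a skeletal induction on $K$ resting on one workhorse fact: the class of finite $\infty$-groupoids (in the sense of \cite{GKT:HLA}: finitely many components, all homotopy groups finite, trivial homotopy above some degree) is closed under finite limits. Closure under finite products is immediate, and closure under pullbacks follows from the long exact sequence of the fibration computing $E\times_B E'$ over each component of $E\times E'$: if $E$, $E'$ and $B$ are finite, then so is each homotopy group of $E\times_B E'$, as is its $\pi_0$, and its homotopy dimension is still bounded. It is also standard that mapping spaces between finite $\infty$-groupoids are finite.

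The base case of the induction is $K=\Delta[n]$, which as an $\infty$-category is the finite poset $[n]$. Here the mapping space is the end
$$
\Map_{\Fun([n]\op,\grpd)}(X,Y)\;\simeq\;\int_{c\in[n]}\Map_{\grpd}(X_c,Y_c),
$$
computed as a limit over the twisted arrow category $\operatorname{Tw}([n])$; since $[n]$ is a finite category so is $\operatorname{Tw}([n])$, each $\Map_{\grpd}(X_c,Y_c)$ is finite, and hence the end is a finite limit of finite $\infty$-groupoids, therefore finite. The same argument proves the lemma verbatim for any finite category in place of $K$ --- in particular for $\Deltainj^{\leq r}$, which is the only case actually invoked in the proof of Theorem~\ref{thm:MI=M}.

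For the inductive step, write $K=K'\cup_{\partial\Delta[n]}\Delta[n]$ as a pushout of simplicial sets with $K'$ having one fewer nondegenerate simplex (the skeletal filtration). The functor $\Fun((-)\op,\grpd)$ of presheaf $\infty$-categories turns this (homotopy) pushout into a pullback of $\infty$-categories, and mapping spaces in a pullback $\infty$-category are the corresponding pullbacks, so
$$
\Map_{\Fun(K\op,\grpd)}(X,Y)\;\simeq\;\Map_{\Fun(K'\op,\grpd)}(X,Y)\times_{\Map_{\Fun(\partial\Delta[n]\op,\grpd)}(X,Y)}\Map_{\Fun(\Delta[n]\op,\grpd)}(X,Y),
$$
with $X,Y$ restricted along the inclusions. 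The two right-hand terms are finite by the base-case argument (applied to $\Delta[n]$ and to $\partial\Delta[n]$, which has fewer nondegenerate simplices), the left-hand term is finite by the inductive hypothesis, and a pullback of finite $\infty$-groupoids is finite. This closes the induction.

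The argument is routine; the only delicate point is bookkeeping --- fixing the precise definition of finite $\infty$-groupoid and verifying closure under finite limits, and interpreting $\Fun((-)\op,\grpd)$ (via fibrant replacement / straightening) so that it is genuinely continuous for the gluing above. As all indexing objects in the application are honest finite categories, only the base case is used there and these subtleties do not intervene.
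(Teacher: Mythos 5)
Your argument is essentially correct, but it takes a genuinely different route from the paper. The paper proves the lemma in one step: by Glasman's formula (\cite[Proposition 2.3]{glasman-1408.3065v3}) the mapping space in $\Fun(K\op,\grpd)$ is the limit, indexed by the edgewise subdivision $\widetilde K$, of the mapping spaces between the (finite) values of $X$ and $Y$; since $\widetilde K$ is again a finite simplicial set, and mapping spaces between finite groupoids are finite because $\grpd$ is cartesian closed (\cite[Proposition 3.15]{GKT:HLA}), the result follows from closure of finite groupoids under finite limits (\cite[Proposition 3.9]{GKT:HLA}). You use the same two finiteness ingredients, but you replace the global twisted-arrow/edgewise-subdivision formula by its poset case only (the end over $\operatorname{Tw}([n])$ for $K=\Delta[n]$), together with a cell induction resting on the facts that $\Fun((-)\op,\grpd)$ carries the pushout $K=K'\cup_{\partial\Delta[n]}\Delta[n]$ to a pullback of $\infty$-categories (restriction along a monomorphism is a categorical fibration, so the strict pullback is already the homotopy pullback) and that mapping spaces in a pullback of $\infty$-categories are the corresponding pullbacks of mapping spaces. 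Your route buys independence from the general limit formula for functor categories over arbitrary simplicial sets; the paper's buys brevity and no induction. One bookkeeping point needs fixing: your induction is on the number of nondegenerate simplices, but $\partial\Delta[n]$ is not a simplex, so the ``base-case argument'' does not apply to it, and it may even have \emph{more} nondegenerate simplices than $K$ itself (e.g.\ $K$ with one vertex and one nondegenerate $n$-cell); organise the induction by dimension, and then by the number of top-dimensional cells, handling the $\partial\Delta[n]$ term by the lower-dimensional inductive hypothesis. With that adjustment the proof is complete, and, as you observe, the application in Theorem~\ref{thm:MI=M} only needs $K$ the nerve of the finite category $\Deltainj^{\leq r}$, where your base-case end argument already suffices.
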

\begin{proof}
This mapping space may be calculated as the limit of the diagram
$$
\widetilde 
K\xrightarrow{\;f\;}K\times K^{\op}\xrightarrow{X^{\op}\times Y}
\grpd\op\times\grpd\xrightarrow{\Map}\Grpd .
$$
See for example \cite[Proposition 2.3]{glasman-1408.3065v3} for a proof. 
Here $\widetilde K$ is the edgewise subdivision of $K$, introduced 
in \cite[Appendix 1]{Segal:1973} as follows:
$$
\widetilde K_n=K_{2n+1},
\qquad \widetilde d_i=d_id_{2n+1-i},
\quad \widetilde s_i=s_is_{2n+1-i}  ,
$$ 
  and $f:\widetilde K\to K\times K\op$ is defined by
  $(d_{n+1},d_0)^{n+1}:K_{2n+1}\to K_n\times K_n$.  Now $\widetilde K$ is also
  finite: for each nondegenerate simplex $k$ of $K$, only a finite number of the
  degeneracies $s_{i_j}\dots s_{i_1}k$ will be nondegenerate in $\widetilde K$.
  Furthermore, mapping spaces between finite
  $\infty$-groupoids are again finite, since
  $\grpd$ is cartesian closed (see \cite[Proposition 3.17]{GKT:HLA}).
  Thus the mapping space in question can be computed as a finite limit of finite
  $\infty$-groupoids, so it is again finite
  (see~\cite[Proposition 3.9]{GKT:HLA}). 
\end{proof}

\begin{prop}
  Let $X$ be a decomposition space with locally finite $X_1$.  Then the 
  following are equivalent.
  \begin{enumerate}
    \item $X$ is \M.
  
    \item All the intervals in $X$ are \M.
  
    \item Its classifying map factors through  $\MI\subset \UU$.
  \end{enumerate}
\end{prop}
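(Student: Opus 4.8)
The plan is to prove the cycle $(1)\Rightarrow(2)\Rightarrow(3)\Rightarrow(1)$. (Note that $X$ is implicitly complete here: the factorisation intervals $I(a)$ and the classifying map are only defined for complete decomposition spaces, and being \M includes completeness; so we assume this.) The implication $(1)\Rightarrow(2)$ is precisely Corollary~\ref{XM=>IM}. For $(2)\Rightarrow(3)$, recall from Section~\ref{sec:fact-int} that the classifying map $I\colon X\to U$ sends a $k$-simplex $\sigma\colon\Delta[k]\to X$ with long edge $a\in X_1$ to the $k$-subdivision $\Delta[k]\to I(a)$; thus the underlying interval of $I(\sigma)\in U_k$ is always one of the intervals $I(a)$ of $X$. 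If these are all \M, then $I(\sigma)$ lands in the full subgroupoid $\MI_k\subset U_k$ for every $k$ and every $\sigma$, and since $\MI\subset U$ is a full simplicial subgroupoid the classifying map corestricts to a simplicial map $X\to\MI$.

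The content is in $(3)\Rightarrow(1)$. The first step is to observe that the corestricted classifying map $\bar I\colon X\to\MI$ is again cULF: the classifying map $X\to U$ is cULF, and the inclusion $\MI\hookrightarrow U$ is cULF because the generic face maps and the degeneracy maps of $U$ act by forgetting part of a subdivision (cf.~\ref{U}) and so leave unchanged the underlying interval, on which the defining condition of $\MI$ depends; hence $\bar I$ is cULF by pasting of pullback squares. Now $\MI$ is \M by Theorem~\ref{thm:MI=M}. By the reformulation of the \M condition in~\ref{bla:tight}, and since $X_1$ is locally finite by hypothesis, it remains only to show that the long-edge map $\sum_r\nondeg X_r\to X_1$ is finite. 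Being cULF, $\bar I$ is cartesian on the (generic) long-edge maps $X_r\to X_1$, and, being conservative, it preserves and reflects degeneracy of principal edges, so that $\nondeg X_r$ is exactly the preimage of $\nondeg\MI_r$ under $\bar I_r$. Therefore the square with corners $\nondeg X_r$, $\nondeg\MI_r$, $X_1$, $\MI_1$ is a pullback; summing over $r$ exhibits $\sum_r\nondeg X_r\to X_1$ as the base change of $\sum_r\nondeg\MI_r\to\MI_1$ along $\bar I_1$. The latter map is finite because $\MI$ is \M, and finiteness of maps is stable under base change, so $\sum_r\nondeg X_r\to X_1$ is finite and $X$ is \M.

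The one point needing care is the claim that $\MI\hookrightarrow U$, and hence $\bar I$, is cULF; this relies on the explicit description of the simplicial structure of $U$ from~\ref{U}, namely that its generic maps forget subdivisions without altering the underlying interval. Once this is granted, the remainder of $(3)\Rightarrow(1)$ is a routine pasting of pullback squares together with the stability of finite maps under base change. (Alternatively, instead of the reformulation of~\ref{bla:tight} one may argue straight from the definition of \M: $\bar I$ cULF over the tight decomposition space $\MI$ is tight by \cite[Proposition~6.4]{GKT:DSIAMI-2}, and $s_0\colon X_0\to X_1$ and $d_1\colon X_2\to X_1$ are finite, being pullbacks along $\bar I$ of the corresponding finite maps of $\MI$.)
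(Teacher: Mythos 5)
Your proof is correct and follows essentially the same route as the paper: the same cycle of implications, with $(1)\Rightarrow(2)$ by Corollary~\ref{XM=>IM}, $(2)\Rightarrow(3)$ from the description of the classifying map, and $(3)\Rightarrow(1)$ by exploiting that $X$ is cULF over the \M space $\MI$ (your parenthetical alternative is exactly the paper's argument --- tightness via \cite[Proposition 6.4]{GKT:DSIAMI-2} plus finiteness of the generic maps by base change --- and your primary version via the long-edge map is an equivalent minor variant). The only real difference is that you justify explicitly that the corestriction $X\to\MI$ is cULF, a point the paper leaves implicit, and your justification (generic maps of $U$ leave the underlying interval unchanged, so $\MI$ is closed under them and the relevant squares are pullbacks of monomorphisms) is correct.
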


\begin{proof}
    If the classifying map factors through $X\to \MI$, then $X$ is \culf over a
    \M space, hence is itself \FILT (by
    \cite[Proposition 6.5]{GKT:DSIAMI-2}), and has finite active maps.
    Since we have assumed $X_1$ locally finite, altogether $X$ is \M. We already
    showed (\ref{XM=>IM}) that if $X$ is \M then so are all its intervals.
    Finally if all the intervals are \M, then clearly the classifying map
    factors through $\MI$.
\end{proof}

\begin{BM}
  For $1$-categories, Lawvere and Menni~\cite{LawvereMenniMR2720184}
  show that a category is \M if and only
  if all its intervals are \M. This is not quite true in our setting: even if
  all the intervals of $X$ are \M, and in particular finite, there is no
  guarantee that $X_1$ is locally finite.
\end{BM}

\begin{blanko}{Conjecture.}
  The decomposition space $\MI$ is terminal in the $\infty$-category of \M decomposition
  spaces and \culf maps.
\end{blanko}
This would follow from Conjecture \ref{conjecture}, but could be strictly weaker.

\begin{blanko}{\M functions.}
  Recall from \cite{GKT:DSIAMI-2} that for a complete decomposition space $X$,
  for each $k\geq 0$, we have the linear functor $\Phi_k$ defined by the span
  $X_1 \leftarrow \nondeg X_k \to \terminal$, and that these assemble into the \M
  function, namely the formal difference 
  $$\mu = \Phieven-\Phiodd,$$ which is
  convolution inverse to the zeta functor $\zeta$ given by the span
  $X_1 \leftarrow X_1 \to \terminal$.  Since we cannot directly make sense of the minus 
  sign, the actual \M inversion formula is expressed as a canonical equivalence
  of $\infty$-groupoids
  $$
  \zeta * \Phieven \simeq \varepsilon + \zeta * \Phiodd.
  $$
  When furthermore $X$ is
  a \M decomposition space, then this equivalence admits a cardinality 
  (see \cite[Theorem 8.9]{GKT:DSIAMI-2}),
  which is the \M inversion formula in $\Q$-vector spaces (where the minus
  sign {\em can} be interpreted).
\end{blanko}

\begin{blanko}{The universal \M function.}
  The decomposition space $\UU$ of all intervals is complete, hence it has \M
  inversion at the objective level as just described.
  Note that the map
  $m:\nondeg \UU_k\to \UU_1$ in $\widehat{\Grpd}$, that defines $\Phi_k$, has fibres
  in $\Grpd$ by Lemma~\ref{lem:U_r_A}.  
  Now it is a general fact that for a \culf map $f:X \to Y$ between complete 
  decomposition spaces, we have 
  $f\upperstar \Phi_k = \Phi_k$   (see \cite[3.9]{GKT:DSIAMI-2}).
  Since every complete decomposition space
  $X$ has a canonical \culf map to $\UU$, 
  it follows 
  that the \M function of $X$ is induced from that of $\UU$.  The latter can
  therefore be called the {\em universal \M function}.

  The same reasoning works in the \M situation, and implies the existence of a
  universal \M function numerically.  Namely, since $\MI$ is \M, its \M 
  inversion formula admits a cardinality.
\end{blanko}

\begin{theorem}
  In the incidence algebra $\Q^{\pi_0 \MI}$, the zeta function $\norm \zeta: 
  \pi_0 \MI \to \Q$ is invertible under convolution, and its inverse is
  the universal \M function
  $$
  \norm \mu := \norm{\Phieven}-\norm{\Phiodd} .
  $$
  The \M function in the (numerical) incidence algebra of any \M decomposition
  space is induced from this universal \M function via the classifying map.
\end{theorem}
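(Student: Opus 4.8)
The plan is to deduce the theorem directly from Theorem~\ref{thm:MI=M} and the objective M\"obius inversion machinery of \cite{GKT:DSIAMI-2}; the whole argument is essentially a matter of taking cardinalities. First I would establish the invertibility of $\norm\zeta$. Since $\MI$ is \M (Theorem~\ref{thm:MI=M}), it is in particular a complete decomposition space, so the objective M\"obius inversion equivalence of $\infty$-groupoids
$$
\zeta * \Phieven \;\simeq\; \epsilon + \zeta * \Phiodd
$$
holds in $\Grpd_{/\MI_1}$. The role of \M-ness, rather than mere completeness, is that it makes all the relevant spans finite --- concretely the long-edge map $\sum_r \nondeg \MI_r \to \MI_1$, the map $d_1: \MI_2 \to \MI_1$, and $s_0: \MI_0 \to \MI_1$ --- which by \cite[Theorem 8.9]{GKT:DSIAMI-2} is exactly the hypothesis under which this equivalence admits a cardinality. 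Taking cardinalities then yields $\norm\zeta * \norm{\Phieven} = \norm\epsilon + \norm\zeta * \norm{\Phiodd}$ in $\Q^{\pi_0\MI}$, which is precisely the statement that $\norm\zeta$ is convolution-invertible with inverse $\norm\mu := \norm{\Phieven} - \norm{\Phiodd}$.

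Next I would treat the universality statement. Given an arbitrary \M decomposition space $X$, the Proposition above shows that its classifying map factors through $\MI \subset U$, and the factored map $I: X \to \MI$ is again cULF --- the inclusion $\MI \into U$ being levelwise a monomorphism along which the cartesian squares over generic maps persist, because generic maps leave the underlying interval unchanged. A cULF functor induces a coalgebra homomorphism on incidence coalgebras, hence dually an algebra homomorphism on the numerical incidence algebras; concretely this is the map $\Q^{\pi_0\MI} \to \Q^{\pi_0 X_1}$ given by precomposition with $\pi_0$ of the $1$-component of $I$. The key input is then \cite[3.9]{GKT:DSIAMI-2}: any cULF map $f$ satisfies $f\upperstar\zeta = \zeta$ and $f\upperstar\Phi_k = \Phi_k$, the latter because cULF maps pull nondegenerate simplices back to nondegenerate simplices. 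Since over $\MI$ all the spans in sight are finite, these identities descend to cardinality, so the induced algebra homomorphism carries $\norm\zeta$, each $\norm{\Phi_k}$, and therefore $\norm\mu$, to the corresponding data of $X$; this is exactly the assertion that the M\"obius function of $X$ is induced from the universal one via the classifying map.

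I do not anticipate a genuine obstacle here: the theorem is a corollary of Theorem~\ref{thm:MI=M} and of the finiteness results of \cite{GKT:DSIAMI-2}, and the remaining work is bookkeeping --- checking that the cardinality operation commutes both with convolution and with pullback along cULF maps, and that the classifying map of a \M decomposition space into $\MI$ is indeed cULF. The one point meriting explicit attention is the verification that the \M-ness of $\MI$ supplies exactly the finiteness hypotheses demanded by \cite[Theorem 8.9]{GKT:DSIAMI-2} --- finiteness of $\sum_r \nondeg \MI_r \to \MI_1$, of $d_1: \MI_2 \to \MI_1$, and of $s_0$ --- but this is the content of Theorem~\ref{thm:MI=M}, so it can simply be invoked.
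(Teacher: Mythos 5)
Your proposal is correct and follows essentially the same route as the paper: the invertibility is obtained by taking the cardinality of the objective inversion equivalence, legitimate because $\MI$ is \M\ by Theorem~\ref{thm:MI=M} together with \cite[Theorem 8.9]{GKT:DSIAMI-2}, and the universality statement comes from the factorisation of the classifying map through $\MI$ plus the cULF-invariance $f\upperstar\Phi_k=\Phi_k$ of \cite[3.9]{GKT:DSIAMI-2}. No substantive difference from the paper's argument.
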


\begin{blanko}{Comparison with Lawvere--Menni.}\label{LM-comparison}
  The idea of a universal Hopf algebra of \M intervals is due to Lawvere,
  and an objective construction of it was first given by
  Lawvere--Menni~\cite{LawvereMenniMR2720184}.  We comment on the
  differences between their setting and approach and ours.
  
  Our setting is the symmetric monoidal $\infty$-category
  $(\LIN,\tensor,\Grpd)$ whose objects are slices of $\Grpd$.  The slice
  $\Grpd_{/X}$ is the homotopy-sum completion of the $\infty$-groupoid $X$.
  Our coalgebras live in $\LIN$, and our convolution algebras live in the
  linear dual, whose objects are presheaf categories $\Grpd^X$ (also the
  homotopy-sum completion of the $\infty$-groupoid $X$).  This means our
  coefficients are $\infty$-groupoids.  (To be precise, finiteness
  conditions should be imposed, as we do.  For all details, see
  \cite{GKT:HLA}.)  The incidence coalgebra of \M intervals is thus the
  slice $\Grpd_{/\UU_1}$, where $\UU_1$ is the $\infty$-groupoid of \M
  intervals.  To arrive at ordinary algebra, we take just take homotopy 
  cardinality. Our `objectification' is thus the most simple-minded, replacing
  numbers by sets, groupoids, $\infty$-groupoids.
   
  Lawvere and Menni do not go in the homotopy direction of
  $\infty$-groupoids, but consider instead a somewhat more subtle
  objectification which involves a level of non-invertible arrows.  Their
  setting is the symmeric monoidal $2$-category $(\kat{EXT}, \tensor,
  \Set)$ of extensive categories, i.e.~categories $\EE$ for which the
  canonical functor $\EE/(A+B) \to \EE/A \times \EE/B$ is an
  equivalence~\cite{Carboni-Lack-Walters}.  Their objective coalgebra is
  the extensive category $\operatorname{Fam}(\kat{sM\"oI})$, the finite-sum
  completion of the category of \M intervals and their stretched maps (in our
  terminology), and the convolution algebra is the extensive dual,
  $\kat{Cat}(\kat{sM\"oI}, \Set)$.  To arrive at ordinary algebra, they
  apply the Burnside-rig construction for extensive categories, which
  amounts to taking isomorphism classes.  To bypass the $2$-categorical
  hassle of $(\kat{EXT}, \tensor, \Set)$, they actually work with {\em
  extensive procomonoids} rather than comonoids.  In the case at hand, this
  means a functor $\Delta: \kat{sM\"oI} \to
  \operatorname{Fam}(\kat{sM\"oI}\times \kat{sM\"oI})$.  Here the
  $\operatorname{Fam}$ on the right is what allows to write formal sums.
  This induces the monoidal structure on $\kat{Cat}(\kat{sM\"oI}, \Set)$ by
  Day convolution.
   
  An important thing to note in the Lawvere--Menni set-up is that
  $\kat{sM\"oI}$ is not just a groupoid, and that their construction is
  therefore functorial also in some maps that are not invertible, namely
  the stretched interval maps.  The full significance of this extra functoriality is
  not clear to us.  It is invisible at the algebraic level.  
  Regarding the universal property, note that the \M categories for which it is 
  supposed to be universal do {\em not} have non-invertible 
  interval maps: the varying incidence coalgebras
  are of the form $\operatorname{Fam}(X_1)$, where $X_1$
  is the set of arrows of a \M category, and in particular is discrete.
  
  The non-invertible aspect is only implicit in our construction.  Namely,
  the universal decomposition space $\UU$ was constructed by taking the right
  fibration associated to $\mathcal{U}\to\simplexcategory$, which in turn involves
  the stretched maps.  The reason for discarding these maps was just to get a
  decomposition {\em space}, not a simplicial $\infty$-category, 
  so as to be able to take cardinality and relate to classical 
  theory.  We leave the development of a theory of {\em decomposition categories}
  for another occasion.
\end{blanko}

\end{document}